\numberwithin{equation}{section}
\newcommand{\cx}{{\mathbb C}} 
\newcommand{\HH}{{\mathbb H}} 
\newcommand{\integers}{{\mathbb Z}}
\newcommand{\nats}{{\mathbb N}}
\newcommand{\ratls}{{\mathbb Q}} 
\newcommand{\reals}{{\mathbb R}} 
\newcommand{\proj}{{\mathbb P}}
\newcommand{\zed}{\integers}
\newcommand{\om}{{\eta}}
\newcommand{\SLOD}{{\rm SL}(\ord_D \oplus \ord_D^\vee)}
\def\zz{\mathbf z}  \def\vv{\mathbf v} \def\uu{\mathbf u} \def\xx{\mathbf x}
  \newcommand{\e}{{\bf e}}
\def\be{\begin{equation}}   \def\ee{\end{equation}}     \def\bes{\begin{equation*}}    \def\ees{\end{equation*}}
\def\ba{\be\begin{aligned}} \def\ea{\end{aligned}\ee}   \def\bas{\bes\begin{aligned}}  \def\eas{\end{aligned}\ees}
\def\mat#1#2#3#4{\begin{pmatrix}#1&#2\\#3&#4\\ \end{pmatrix}}  
\newcommand{\ol}{\overline}
\newcommand{\Teichmuller}{Teich\-m\"uller\xspace}
\newcommand{\FF}[1]{\mathbb{F}_{#1}}
\renewcommand{\tilde}{\widetilde}
\newcommand{\moduli}[1][g]{{\mathcal M}_{#1}}
\newcommand{\AVmoduli}[1][g]{{\mathcal A}_{#1}}
\newcommand{\omoduli}[1][g]{{\Omega\mathcal M}_{#1}}
\newcommand{\pomoduli}[1][g]{{\proj\Omega\mathcal M}_{#1}}
\newcommand{\SL}{{\mathrm{SL}}}
\newcommand{\Sp}{\mathrm{Sp}}
\newcommand{\teich}{\mathcal{T}}
\newcommand{\fra}{\mathfrak{a}}
\newcommand{\frb}{\mathfrak{b}}
\DeclareMathOperator{\Jac}{Jac}
\DeclareMathOperator{\Pic}{Pic}
\DeclareMathOperator{\Prym}{Prym}
\DeclareMathOperator{\Tr}{Tr}
\DeclareMathOperator{\diag}{diag}
\newtheoremstyle{example}{3pt}{3pt}{\upshape}{}{\itshape}{.}{.5em}{}
\newtheorem{theorem}{Theorem}[section] 
\newtheorem{prop}[theorem]{Proposition} 
\newtheorem{lemma}[theorem]{Lemma}
\newtheorem{example}[theorem]{Example}
\newtheorem{problem}[theorem]{Problem}
\theoremstyle{example}
\theoremstyle{definition}
\theoremstyle{remark}
\def\blfootnote{\xdef\@thefnmark{}\@footnotetext}
\renewcommand{\l@section}{\@dottedtocline{0}{1.5em}{2.3em}}
\renewcommand{\l@subsection}{\@dottedtocline{1}{3.8em}{3.2em}}
\renewcommand{\l@subsubsection}{\@dottedtocline{2}{7.0em}{4.1em}}
\newcommand{\GL}{{\rm GL}}
\newcommand{\ord}{\mathfrak{o}}
\newcommand{\cL}{\mathcal{L}}
\newcommand{\cF}{\mathcal{F}}
\newcommand{\cX}{\mathcal{X}}
\newcommand{\cO}{\mathcal{O}}
\newcommand{\legendre}[2]{\left(\frac{#1}{#2}\right)}
\begin{document}

\bibliographystyle{halpha}

\title{Prym covers, theta functions and Kobayashi curves in Hilbert modular surfaces}
\author{Martin M\"oller}

\maketitle

\begin{abstract}
Algebraic curves in Hilbert modular surfaces 
that are totally geodesic for the Kobayashi metric have
very interesting geometric and arithmetic properties, e.g.\ they are rigid.
There are very few methods known to construct such algebraic geodesics that
we call Kobayashi curves.
\par
We give an explicit way of constructing Kobayashi curves using 
determinants of derivatives of theta functions.
This construction also allows to calculate the Euler characteristics
of the \Teichmuller curves constructed by McMullen using Prym covers.
\end{abstract}

\tableofcontents
\section*{Introduction}
\label{sec:introduction}

We call {\em Kobayashi curves}  algebraic curves on Hilbert modular surfaces  
that are totally geodesic for the Kobayashi metric. They are rigid and
interesting both from geometric and arithmetic point of view and there are
very few methods to construct Kobayashi curves. This paper
can be read from two perspectives, the construction of  Kobayashi curves
and the calculation of invariants of \Teichmuller curves.
\par
The most obvious Kobayashi curve on a Hilbert modular surface $X_D = \HH^2/\SLOD$
is the image of the diagonal in $\HH^2$. One can twist it by a matrix $M \in \GL^+_2(\ratls(\sqrt{D})$, 
i.e.\ consider the image of $z \mapsto (Mz, M^\sigma z)$ and obtain further 
Kobayashi curves, also known as Hirzebruch-Zagier cycles or Shimura curves.
These curves are more special, metrically they are even geodesic for the
invariant Riemannian metric on $\HH^2$. The first Kobayashi curves without 
this supplementary property, were constructed implicitly in 
\cite{calta} and \cite{mcmullenbild}. They were constructed as \Teichmuller
curves $W_D \in \moduli[2]$, the name refers to their construction
using Weierstra\ss\ points. Their image under the Torelli map lies in $X_D$
and in this sense the curves $W_D$  generalize the modular embeddings
of triangle groups in \cite{CoWo90} that cover some small discriminants $D$. 
One can apply a twist by a M\"obius transformation $M\in \GL^+_2(\ratls(\sqrt{D}))$ 
also to $W_D$ to obtain more Kobayashi curves. The geometry of the 
resulting curves is studied in \cite{weiss}. 
\par
Not all the Kobayashi curves in $X_D$ arise as twists of $W_D$ or of the diagonal.
In fact, it is shown in \cite{weiss} that an invariant
(second Lyapunov exponent) of a Kobayashi curve is unchanged under twisting.
Moreover, the image of other \Teichmuller curves $W_D(6)$ constructed 
in \cite{mcmullenprym} map to curves  $W_D^X$ in $X_D$ that are
also Kobayashi curves and 
that have a second Lyapunov exponent different from the one of the diagonal 
and the second Lyapunov exponent of $W_D$. 
\par
The first aim of this paper is to construct explicitly modular forms whose
vanishing loci are these Kobayashi curves  $W_D^X$. This part is a continuation
of \cite{moelzag}. There, a theta function interpretation
of the first series of \Teichmuller curves $W_D$ has been found. Having an explicit modular
form at hand can be used to determine the period map explicitly as power series
(whereas this is a great mystery from the \Teichmuller curve perspective) and it
can be used to retrieve all combinatorial properties of $W_D$ (e.g.\ the set of cusps, etc.) 
from an arithmetic perspective
without using the  geometry of flat surfaces. 
\par
To state the first result, let $\theta_0,\theta_1,\theta_2,\theta_3$ be the classical theta constants 
with characteristic $(c_1,0)$ for $c_1 \in \frac12 \zed^2/\zed^2$. The precise definition 
is given in Section~\ref{sec:HMS}. We write 
$f'$ for the derivative of $f$
in the direction $z_2$, where $(z_1,z_2)$ are the coordinates in $\HH^2$.
\par
\begin{theorem} \label{thm:introGDX}
The determinant of derivatives of theta functions
$$G^X_D(\zz) = 
 \left| \begin{matrix} 
\theta_{0}(\zz) & \theta_{1}(\zz) & \theta_{2}(\zz) & \theta_{3}(\zz)\\
\theta'_{0}(\zz) & \theta'_{1}(\zz) & \theta'_{2}(\zz) & \theta'_{3}(\zz)\\
\theta''_{0}(\zz) & \theta''_{1}(\zz) & \theta''_{2}(\zz) & \theta''_{3}(\zz) \\
\theta'''_{0}(\zz) & \theta'''_{1}(\zz) & \theta'''_{2}(\zz) & \theta'''_{3}(\zz) \\
\end{matrix} \right|
$$
is a modular form of weight $(2,14)$ for $\SLOD$. Its vanishing locus is the Kobayashi curve $W_D^X$.
\end{theorem}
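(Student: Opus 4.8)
The plan is to separate the two assertions. That $G^X_D$ is a modular form of weight $(2,14)$ is a Wronskian computation combined with the transformation law of the theta constants under $\SLOD$, while the identification of its zero divisor with $W_D^X$ rests on the dictionary, in the spirit of \cite{moelzag}, between the $z_2$-derivatives of the $\theta_i$ and the vanishing order of the eigenform in the second factor.

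For the weight, write $\gamma = (M, M^\sigma) \in \SLOD$ acting on $\zz = (z_1,z_2)$, and let $j_\nu = c_\nu z_\nu + d_\nu$ be the two automorphy factors. Each $\theta_i$ is a modular form of weight $(\tfrac12,\tfrac12)$ carrying a common theta multiplier $\chi(\gamma)$, an eighth root of unity, and the four characteristics $(c_1,0)$, $c_1 \in \tfrac12\zed^2/\zed^2$, are permuted by some $\pi_\gamma \in S_4$, so that
\[
\theta_i(\gamma\zz) = \chi(\gamma)\, j_1^{1/2}\, j_2^{1/2}\, \theta_{\pi_\gamma(i)}(\zz).
\]
Since $j_1$ is independent of $z_2$, differentiating this relation repeatedly in $z_2$ shows that the column $(\theta_i, \theta_i', \theta_i'', \theta_i''')^{t}$ transforms, up to the common scalar $\chi(\gamma)\,j_1^{1/2}$, by a fixed lower-triangular matrix $T(\gamma,z_2)$ with diagonal $(j_2^{1/2}, j_2^{5/2}, j_2^{9/2}, j_2^{13/2})$, followed by the column permutation $\pi_\gamma$. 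Taking determinants,
\[
G^X_D(\gamma\zz) = \chi(\gamma)^4\, \mathrm{sgn}(\pi_\gamma)\, j_1^{2}\, j_2^{14}\, G^X_D(\zz),
\]
because $\det T = j_2^{\,1/2+5/2+9/2+13/2} = j_2^{14}$ and each of the four columns contributes one factor $j_1^{1/2}$ and one factor $\chi(\gamma)$. The weight $(2,14)$ is read off immediately, and the one remaining point for modularity is the cancellation $\chi(\gamma)^4\,\mathrm{sgn}(\pi_\gamma) = 1$ on all of $\SLOD$. This is the first delicate step: I would deduce it from Igusa's transformation formula for theta constants under $\Sp_4$, pulled back along the embedding $\HH^2 \hookrightarrow \HH_2$ that defines the real multiplication. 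As the multiplier is an eighth root of unity, $\chi^4 = \pm 1$, and one checks on a set of generators of $\SLOD$ that its sign is exactly compensated by the parity of the permutation of the four characteristics with vanishing lower component, paralleling the corresponding computation for $W_D$ in \cite{moelzag}.

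For the zero divisor I would argue by inclusion in both directions. Fixing $z_1$ and regarding $z_2 \mapsto [\theta_0 : \theta_1 : \theta_2 : \theta_3](\zz)$ as a curve in $\proj^3$, the determinant $G^X_D$ is precisely its Wronskian, so $G^X_D(\zz)=0$ is equivalent to the degeneration of the osculating frame at $\zz$, that is, to the four vectors $(\theta_i, \theta_i', \theta_i'', \theta_i''')$ being linearly dependent. On the other hand, a point of $\HH^2$ lies on $W_D^X$ exactly when the associated abelian surface with real multiplication is the Prym of McMullen's genus-three configuration and the eigenform for the second real embedding has a zero of the prescribed high order at the fixed point of the Prym involution, as in \cite{mcmullenprym}. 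The successive $z_2$-derivatives of the $\theta_i$ are the Taylor coefficients, along the second eigenform direction, of the sections cutting out this point, so the prescribed vanishing order of the eigenform forces exactly the above linear dependence. This yields the inclusion $W_D^X \subseteq \{G^X_D = 0\}$.

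The reverse inclusion, and the upgrade from a set-theoretic to a divisorial equality, is the main obstacle. A common zero of the Wronskian only asserts that \emph{some} osculating degeneration occurs, and one must show that this degeneration is precisely the Prym eigenform condition and not a spurious one. I would handle this by first excluding the product (reducible) locus and the cusps, checking via the product expansions of the $\theta_i$ near the boundary that $G^X_D$ does not vanish identically there, and then analysing the local structure at a generic point of $W_D^X$ to show that the Wronskian vanishes to first order exactly, so that $\{G^X_D=0\}$ is reduced and carries no further components. Once this is established the zero divisor equals $W_D^X$, and its class is pinned down by the weight $(2,14)$, which is exactly the input needed for the Euler characteristic computation carried out in the later sections.
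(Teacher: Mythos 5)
Your treatment of the modularity and of the inclusion $W_D^X \subseteq \{G_D^X = 0\}$ matches the paper in substance: the lower-triangular transformation of the derivative columns is the same mechanism as the paper's Rankin--Cohen-type cancellation (Proposition~\ref{prop:GD}), and the passage from the $6$-fold zero of the eigenform to the linear dependence of the four columns is exactly the paper's Lemma~\ref{le:6foldzero} plus the heat equation. (Two caveats: the four characteristics $(c_1,0)$ are not merely permuted by all of $\SLOD$ --- the quotient by a congruence subgroup acts by a genuine linear representation on the basis of $\cL^{\otimes 2}$, so your $\mathrm{sgn}(\pi_\gamma)$ should be a determinant of that representation; and note the paper itself only claims modularity up to a character in Proposition~\ref{prop:GD}. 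Also, $W_D^X$ comes from the genus-\emph{four} Prym curves $W_D(6)$ with a $6$-fold zero, not from a genus-three configuration as you write.)

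The genuine gap is in the reverse inclusion. Your plan is to show that $G_D^X$ vanishes to first order along $W_D^X$ and does not vanish identically at the boundary, and to conclude that $\{G_D^X=0\}$ ``is reduced and carries no further components.'' Reducedness along $W_D^X$ controls the multiplicity of $W_D^X$ inside the divisor of $G_D^X$; it says nothing about components of that divisor lying \emph{away} from $W_D^X$, and a Wronskian can perfectly well acquire spurious osculation degeneracies elsewhere. A class comparison would close this, but it would be circular here, since $\chi(W_D(6))$ is precisely what the paper extracts from this theorem. The paper instead proves a pointwise Torelli-type converse (Proposition~\ref{prop:g4torelli}): at \emph{every} zero $\zz$ of $G_D^X$ the $3\times 3$ minors $a_j(\zz)$ define a section $\theta_X=\sum a_j\theta_j$ of $\cL^{\otimes 2}$ whose zero divisor is an arithmetic genus five curve with a node at the origin, normalizing to a genus four curve in $W_D(6)$ on which $du_1$ has a $6$-fold zero; the only way this can fail is $\theta_X\equiv 0$, i.e.\ the $3\times 4$ matrix of $\theta_j,\theta_j',\theta_j''$ having rank $\le 2$, and this is excluded because the map $A_\zz\to\proj^3$ given by $\cL^{\otimes 2}$ is a Kummer surface whose node at the origin has nondegenerate Hessian. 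This reversal of the construction, together with the Kummer-surface argument, is the idea your proposal is missing, and without it the divisorial equality is not established. Finally, the statement also asserts that $W_D^X$ is a Kobayashi curve, which your proposal does not address; the paper derives it from the factorization of the period map of the Teichm\"uller curve $W_D(6)$ through $\HH\to\HH^2$.
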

\par
Determining invariants of \Teichmuller curves is the motivation to a variant
of this construction for non-principally polarized abelian varieties. From that point of 
view this paper is jointly with the work of \cite{manhlann} a continuation of \cite{mcmullenprym}.
\par
Let $W_D(6)$ be the Prym \Teichmuller curves in $\moduli[4]$ and let $W_D(4)$ be the
Prym \Teichmuller curves in $\moduli[3]$. The notation refers to their construction
using holomorphic one-forms with a $6$-fold resp.\ with a $4$-fold zero. 
(See Section~\ref{sec:HMS} for the
definitions and the construction of these curves via flat surfaces.) 
\par
\begin{theorem} \label{thm:intoXS}
For the Prym \Teichmuller curves $W_D(6) \subset \omoduli[4]$
the Euler characteristic is given by
$$ \chi(W_D(6)) = -7 \chi(X_D).$$
For genus three and $D \equiv 5 (8)$ the locus $W_D(4)$ is empty.
For $D \equiv 4 (8)$ we have 
$$\chi(W_D(4)) = -\frac52 \chi(X_{D,(1,2)})$$
and for $D \equiv 1(8)$ there are two components $W_D(4)^1$ and 
$W_D(4)^2$ with
$$\chi(W_D(4)^j) = -\frac52 \chi(X_{D,(1,2)}) \quad \text{for}\quad j=1,2.$$
\end{theorem}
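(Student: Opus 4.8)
The plan is to realize each Prym \Teichmuller curve as (a component of) the vanishing locus of an explicit modular form on the relevant Hilbert modular surface, exactly in the spirit of Theorem~\ref{thm:introGDX}, and then to turn the computation of $\chi$ into an intersection computation. Write $\lambda_1,\lambda_2$ for the two Hodge line bundles on $X_D$, so that a modular form of weight $(a,b)$ is a section of $\lambda_1^{a}\otimes\lambda_2^{b}$. The underlying principle is that for a Kobayashi curve $C\subset X_D$, uniformized by $\HH$ with the $z_1$-factor as the \Teichmuller direction (maximal Lyapunov exponent $1$), the restriction $\lambda_1|_C$ is a square root of the relative dualizing sheaf, so that $\deg(\lambda_1|_C)=-\tfrac12\chi(C)$. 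Hence it suffices to compute $\deg(\lambda_1|_{W_D^X})$ as an intersection number on a suitable compactification $\overline{X}_D$, for which I would use the proportionality relations $\lambda_1^2=\lambda_2^2$ and $\lambda_1\cdot\lambda_2=\tfrac14\chi(X_D)$ (coming from $c_2(X_D)=c_1(T_1)\,c_1(T_2)=4\,\lambda_1\lambda_2$ with $c_1(T_i)=-2\lambda_i$ for the tangent bundles $T_1,T_2$ of the two $\HH$-factors).

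For the genus four curve $W_D(6)$ I would use that its image under the Prym map is $W_D^X$, that this map is generically injective so that $\chi(W_D(6))=\chi(W_D^X)$, and that $W_D^X$ is cut out by $G^X_D$ of weight $(2,14)$ by Theorem~\ref{thm:introGDX}. The zero divisor therefore has class $2\lambda_1+14\lambda_2$, but in addition to $W_D^X$ it contains the reducible (product) locus where the Prym degenerates and the theta functions become linearly dependent. Writing $[Z(G^X_D)]=[W_D^X]+(\text{reducible and boundary components})$ and intersecting with $\lambda_1$, the term $14\,\lambda_1\cdot\lambda_2$ produces $\deg(\lambda_1|_{W_D^X})=14\cdot\tfrac14\chi(X_D)=\tfrac72\chi(X_D)$, once the reducible locus and the cusp/self-intersection corrections are shown not to contribute to the $\lambda_1$-count. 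This gives $\chi(W_D(6))=-7\chi(X_D)$.

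For genus three the same scheme applies, but the Prym of a one-form with a $4$-fold zero carries a polarization of type $(1,2)$, so the natural parameter space is $X_{D,(1,2)}$ rather than $X_D$, and the relevant modular form is the determinant of derivatives of the theta functions adapted to the $(1,2)$-polarization. Carrying out the identical intersection computation for this form, together with the normalization of the $(1,2)$-polarized Hodge bundle, produces the coefficient $\tfrac52$ in $\chi(W_D(4))=-\tfrac52\chi(X_{D,(1,2)})$. The qualitatively new phenomenon is the dependence on $D\bmod 8$: the existence and the number of connected components of $W_D(4)$ are controlled by the arithmetic of the associated theta characteristics (equivalently a Gauss-sum/spin invariant). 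I would show that no admissible eigenform configuration exists when $D\equiv 5\ (8)$, that there is a single family when $D\equiv 4\ (8)$, and that the relevant characteristic splits into two Galois-exchanged families when $D\equiv 1\ (8)$, giving $W_D(4)^1,W_D(4)^2$; by symmetry these carry equal Euler characteristics.

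The main obstacle in both cases is the intersection bookkeeping of the middle step: one must precisely identify the extra components of the zero divisor, namely the product/reducible locus (itself a union of modular curves) together with the behaviour along the resolved cusps of $\overline{X}_D$, and verify that their contribution to $\lambda_1\cdot[\,\cdot\,]$ vanishes or cancels, so that only the clean proportionality term survives and the coefficients $7$ and $\tfrac52$ emerge exactly. In the genus three case a second, genuinely arithmetic, difficulty is the $\bmod\,8$ analysis deciding emptiness versus one or two components; this is where the theory of theta characteristics, rather than the intersection calculus, does the real work.
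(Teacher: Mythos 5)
Your overall strategy is the same as the paper's: cut out the image curves by determinants of theta derivatives, then compute $\chi$ by pairing the divisor class of the modular form with the foliation class, using the Kobayashi property $\deg(\omega_1|_C)=-\chi(C)$ together with $[\omega_1]^2=0$ and $[\omega_1]\cdot[\omega_2]=\chi(X_D)$ (your normalization $\lambda_i=\tfrac12[\omega_i]$ is consistent with this, though note you need $\lambda_1^2=0$, not merely $\lambda_1^2=\lambda_2^2$). However, the step you defer as ``intersection bookkeeping'' is the mathematical heart of the argument, and your framing of it points in the wrong direction: you posit that $Z(G^X_D)$ contains a reducible/degenerate locus in addition to $W_D^X$ and hope its contribution to the $\lambda_1$-count cancels, but an honest extra curve component in the interior of $X_D$ would generically have strictly positive $\omega_1$-degree, so ``cancellation'' is not available. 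What the paper actually proves (Proposition~\ref{prop:g4torelli}) is that the vanishing locus \emph{equals} $W_D^X$: an extra component could only occur where all four coefficients $a_j(\zz)$ vanish, i.e.\ where the $3\times 4$ matrix of theta constants and their derivatives has rank $\le 2$, and this is excluded because the image of $A_\zz$ under $|\cL^{\otimes 2}|$ is a Kummer surface whose $16$ nodes have nondegenerate Hessians. The genus-three analogue (Proposition~\ref{prop:g3torelli}) needs a separate and harder degeneration argument via fibered surfaces and Shimura curves. Without such arguments your coefficients $7$ and $\tfrac52$ are only bounds. The same Torelli-type propositions also supply the bijectivity of $W_D(6)\to W_D^X$ and $W_D(4)\to W_D^S$, which you assert but need in order to transfer $\chi$ from the image curve back to the \Teichmuller curve.

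The second genuine gap is the mechanism behind the mod $8$ trichotomy in genus three. It is not governed by theta characteristics or a Gauss-sum/spin invariant, but by the splitting of the prime $2$ in $\ord_D$: a $(1,2)$-polarized abelian surface with real multiplication corresponds to a lattice $\fra\oplus\ord_D^\vee$ with $\fra$ a fractional ideal of norm $2$, so $X_{D,(1,2)}$ (hence $W_D(4)$) is empty precisely when $2$ is inert, i.e.\ $D\equiv 5\ (8)$; there is a single choice of $\fra$ when $D\equiv 0,4\ (8)$, and two Galois-conjugate choices $\fra,\fra^\sigma$ when $D\equiv 1\ (8)$, yielding the two components, which the paper shows are generically distinct by comparing the $\ord_D$-modules $\fra\oplus\ord_D^\vee$ and $\fra^\sigma\oplus\ord_D^\vee$ via determinants. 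The spin-structure computation does appear in the paper, but only to identify the correct translate of the theta bundle (even in genus four, odd in genus three), not to decide emptiness or the number of components.
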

\par
Here $X_{D,(1,2)}$ is the locus in the moduli space of $(1,2)$-polarized abelian surfaces 
parameterizing surfaces with real multiplication, a Hilbert modular surface
for some Hilbert modular group commensurable to the standard Hilbert modular groups.
We give a precise definition and a way to evaluate explicitly the Euler characteristic in 
Section~\ref{sec:HMS}. The preceding theorem does not prove that $W_D(4)$ nor $W_D(4)^{j}$
is irreducible. This important result is shown in \cite{manhlann}. Connectedness of $W_D(6)$
is conjectured in \cite{manhlann} with  evidence given by small discriminants.
\par
Pictures of the flat surfaces generating the \Teichmuller curves
$W_D$, $W_D(6)$, $W_D(4)$ are drawn in Figure~\ref{cap:WDLSX}.
Note that there is presently no algorithm known to compute the group uniformizing 
the curves $W_D$ directly, i.e.\  using the geometry of
the generating flat surfaces, if $D$ is larger than some small explicit
constant and thus $W_D$ not a rational curve. The same statement holds for
$W_D(6)$ and for $W_D(4)$.
\par
\begin{figure}[h]
\begin{center}
\psfrag{1}{$1$}
\psfrag{2}{$2$}
\psfrag{3}{$3$}
\psfrag{4}{$4$}
\psfrag{5}{$5$}
\psfrag{6}{$6$}
\psfrag{7}{$7$}
\psfrag{8}{$8$}
\epsfig{figure=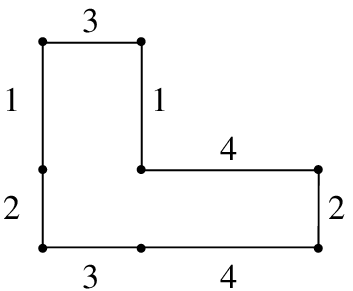, width=3cm}\quad\quad
\epsfig{figure=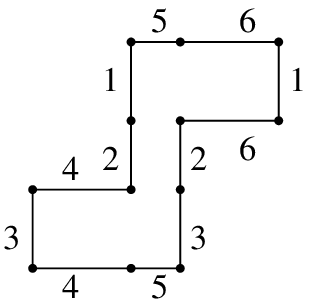, width=3cm}\quad\quad
\epsfig{figure=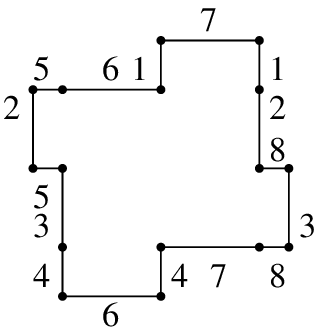, width=3cm}
\end{center}
\caption{Flat surfaces generating respectively the \Teichmuller curves $W_D$, $W_D(6)$ and $W_D(4)$ 
(from \cite{mcmullenprym}). }
\label{cap:WDLSX}
\end{figure}
\par
In order to calculate $\chi(W_D(4))$ we construct Hilbert modular forms for 
the Hilbert modular groups associated with $X_{D,(1,2)}$ whose
vanishing loci are the image curves $W_D^S$ of $\chi(W_D(4))$ in $X_D$.
As for $G_D^X$, these Hilbert modular 
forms $G_D^S$ are very 'canonical', determinants of derivatives
of theta functions. Their precise from is stated in Proposition~\ref{prop:GDS}. The
Euler characteristic of $W_D(6)$ is evaluated, too, using modular forms, without
ever referring to the geometry of flat surfaces. This strategy was first carried out
by Bainbridge (\cite{bainbridge07}) to compute the Euler characteristic of
the curves $W_D$. There, on the contrary, the modular form cutting out $W_D$ was first 
described using relative periods, i.e.\ flat surface geometry.
\par
The construction raises the question to construct more, even to determine all Kobayashi
curves on Hilbert modular surfaces. The construction of modular forms 
using determinants of derivatives of theta functions has a natural analog using
higher multiples of the principal polarization and higher order derivatives. Yet, 
showing that these modular forms define Kobayashi curves requires techniques different
from the ones used here. Some speculations in this direction are the content of
Section~\ref{sec:general}.
\par
The core of Theorem~\ref{thm:introGDX} is Proposition~\ref{prop:GD} as well as some converse statement
derived in Section~\ref{sec:eulerchar}. While proving this converse statement
it turns out that the maps $W_D(6) \to W_D^X$ and $W_D(4) \to W_D^S$ are bijections, 
hence that introducing two names for these curves served for technical purposes only.
The proofs of both Theorem~\ref{thm:introGDX} 
and Theorem~\ref{thm:intoXS} appear at the end of Section~\ref{sec:eulerchar}.
\par
The author thanks Olivier Debarre, Sam Grushevsky, Erwan Lanneau, Curt McMullen 
and Don Zagier for discussions and useful comments.
\par
\section{Background}
\label{sec:HMS}

In this section we develop the notions of real multiplication, Hilbert modular
surfaces and their embedding into the moduli space of principally polarized
abelian varieties in some detail. This is well-known, but the variant
for $(1,2)$-polarizations, that we also need, is treated in less detail in
the literature. We end with some generalities on Kobayashi curves.

\paragraph{Hilbert modular surfaces.}
Let $\ord=\ord_D$ be the order of discriminant $D$ in the
quadratic field $K=\ratls(\sqrt{D})$ with $\sigma$ a generator of the Galois group
of $K/\ratls$. We fix once and for all two
embeddings $\iota, \iota_2: K \to \reals$ and implicitly use the first embedding
unless stated differently. We denote by $X_D$ the Hilbert modular surface of
discriminant $D$, i.e.\ $X_D = \HH^2/\SLOD$. These Hilbert modular surfaces parameterize principally
polarized abelian varieties with real multiplication by $\ord_D$. We give the
details to introduce two types of coordinates since we will soon also need the non-principally
polarized version.
\par
To a point $\zz = (z_1,z_2) \in \HH^2$ we associate the polarized abelian variety 
$A_\zz = \cx^2/\Lambda_\zz$ where $\Lambda_\zz$ is the lattice
\be \label{eq:lazz}
 \Lambda_\zz = \{(a + bz_1, a^\sigma+b^\sigma z_2)^T \,|\,\, a \in \ord_D, b \in \ord_D^\vee \} 
\ee
We denote the coordinates of $\cx^2$ by $\uu = (u_1,u_2)^T$ and we see that
that real multiplication is given for $\lambda \in \ord_D$ by 
$\lambda \cdot(u_1,u_2)^T = (\lambda u_1, \lambda^\sigma u_2)^T$. Consequently, 
the holomorphic one-forms $du_1$ and $du_2$ on $A_\zz$ are eigenforms for
 real multiplication and 
we refer to $\uu$ as {\em eigenform coordinates}. We also say that $du_1$ is the  eigenform 
for $\iota$, unique up to scalar, i.e.\ where $\lambda \in K$ acts by $\lambda du_1 =
\iota(\lambda) \cdot du_1$. Note that here and in the sequel we represent the 
universal covering $\cx^2$ of $A_\zz$ by column vectors. 
\par
The pairing 
\be \label{eq:tracep}
\langle(a,b),(\tilde{a},\tilde{b})  \rangle = \Tr^K_\ratls(a\tilde{b} - \tilde{a}b) 
\ee
is integer valued on $\ord_D \oplus \ord_D^\vee$ (which we identify with $\Lambda_\zz)$ and 
unimodular by definition of $\ord_D^\vee$. It thus defines a principal polarization on $A_\zz$.
\par
Associated with any choice of $\zed$-basis $(\om_1,\om_2)$ of $\ord_D$ 
satisfying the sign convention $ \Tr_\ratls^K(\om_1\om_2^\sigma) = +\sqrt{D}$
there is a symplectic basis  of $\ord_D \oplus \ord_D^\vee$ given by
$$a_1 = (\om_1,0)^T, a_2 = (\om_2,0)^T, b_1=(0,\om_2^\sigma/\sqrt{D} ) ^T, b_2 = (0,\om_1^\sigma/\sqrt{D} )^T. $$
In this basis of homology and in the eigenform coordinates, the period matrix of $A_\zz$
is given by 
$$\Pi_\uu = \left(\begin{matrix} \om_1 & \om_2 & \om_2^\sigma z_1/\sqrt{D} & -\om_1^\sigma z_2/\sqrt{D} 
\\ \om_1^\sigma & \om_2^\sigma & -\om_2 z_1/\sqrt{D} & \om_1z_2/\sqrt{D} \\
\end{matrix} \right) = \left(\begin{matrix} B\,\, & \mat {z_1}00 {z_2} A^T \\
\end{matrix} \right),$$
where 
\be \label{eq:SMEA}  A=B^{-1}\,,\qquad B = \mat{\om_1}{\om_2}{\om_1^\sigma}{\om_2^\sigma} \;.\ee 
The change of basis $\vv = (v_1,v_2)^T = A\cdot \uu$ results in multiplying $\Pi_\uu$ 
from the left by $A$ and gives
\be \label{eq:Pivv} \Pi_\vv = \left(\begin{matrix} I_2\,\, & A\mat {z_1}00 {z_2} A^T \\ \end{matrix} \right) \ee  
Since the period matrix of $A_\zz$ is in standard form with respect to the
basis $\vv = (v_1,v_2)$ (and a $\zed$-basis of homology) we refer to $\vv$ as {\em standard coordinates}.
\par

\paragraph{Siegel modular embeddings.} Any choice of a $\zed$-basis  $(\om_1,\om_2)$ of $\ord_D$, 
defines a map $\psi: \HH^2 \to \HH_2$ that is equivariant with respect to a 
homomorphism $\Psi:\SLOD \to\Sp(4,\zed)$ and descends to a map $X_D \to \AVmoduli[2]$. This map
is the inclusion of the locus of real multiplication into the moduli space of abelian surfaces.
Explicitly, from \eqref{eq:Pivv} we know that 
\be \label{eq:SME} \psi\,: \quad \zz\,=\,(z_1,z_2)\;\mapsto\; A\mat {z_1}00 {z_2} A^T \ee
where $A$ and $B$ are defined in \eqref{eq:SMEA}. 
If we let
\be \label{eq:SMEG}   \Psi\,: \quad  \mat abcd\mapsto\mat A00{B^T}\mat{\widehat a}{\widehat b}{\widehat c}{\widehat d}\mat B00{A^T}\,,\ee
where $\widehat a$ for $a\in K$ denotes the diagonal matrix $\,\diag(a,\sigma(a))$, then the equi\-variance
is easily checked. 


\paragraph{Abelian surfaces with real multiplication and a $(1,2)$-polarization.}

We define $X_{D,(1,2)}$ to be the moduli space of $(1,2)$-polarized abelian varieties with
real multiplication by $\ord_D$.
\par
\begin{prop}
The locus  $X_{D,(1,2)}$ is empty for $D \equiv 5\,\, (8)$. The locus  $X_{D,(1,2)}$ 
is non-empty and irreducible both for  $D \equiv 0,4\,\, (8)$ and 
for $D \equiv 1\,\, (8)$.
\end{prop}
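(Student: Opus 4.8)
The plan is to realize $X_{D,(1,2)}$ by an explicit lattice construction parallel to \eqref{eq:lazz} and to reduce both assertions to the splitting behaviour of the prime $2$ in $\ord_D$. First I would represent a $(1,2)$-polarized abelian surface with real multiplication as $A=\cx^2/\Lambda$ with $\Lambda=\{(a+bz_1,a^\sigma+b^\sigma z_2)^T : a\in\ord_D,\ b\in\frb\}$ for a fractional $\ord_D$-ideal $\frb\subseteq\ord_D^\vee$, carrying the trace pairing $\langle(a,b),(\tilde a,\tilde b)\rangle=\Tr^K_\ratls(a\tilde b-\tilde a b)$, which is automatically compatible with the $\ord_D$-action. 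Since real multiplication forces $H_1(A,\zed)$ to be a proper rank-two $\ord_D$-module, it can be put in this normal form, and the task becomes to read the polarization type off the ideal $\frb$.

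Because the trace pairing between $\ord_D$ and its codifferent $\ord_D^\vee$ is unimodular, the elementary divisors of the pairing on $\ord_D\oplus\frb$ are governed by the index $[\ord_D^\vee:\frb]$; a polarization of type $(1,2)$ arises precisely when $\frb=\frc\,\ord_D^\vee$ for a proper $\ord_D$-ideal $\frc$ of norm $2$, so that the Pfaffian equals $N(\frc)=2$. As every $(1,2)$-polarized RM surface has its module in this shape, the surface is non-empty if and only if $\ord_D$ carries an ideal of norm $2$, i.e.\ if and only if $2$ is not inert. Tracking the decomposition of $2$ shows it is inert exactly for $D\equiv 5\ (8)$, ramified for $D\equiv 0,4\ (8)$ and split for $D\equiv 1\ (8)$ --- the congruence being insensitive to an odd conductor, with the even-conductor case $D\equiv 0\ (8)$ checked directly. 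This gives emptiness for $D\equiv 5\ (8)$, while in the remaining classes the construction produces honest points of $\HH^2$ and hence non-emptiness.

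For irreducibility the aim is to show that the polarized module $(\Lambda,\langle\,,\rangle)$ is unique up to isomorphism in each surviving class, so that $X_{D,(1,2)}$ is a single quotient $\HH^2/\Gamma$ by the associated (irreducible) Hilbert modular group and is therefore connected. The isomorphism class is controlled by the ideal class of $\frc$. In the ramified cases $D\equiv 0,4\ (8)$ there is a unique prime above $2$, so $\frc=\mathfrak{p}_2$ is forced and uniqueness is automatic.

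The delicate case, and the main obstacle, is $D\equiv 1\ (8)$, where $2=\mathfrak{p}_2\mathfrak{p}_2^\sigma$ splits and $[\mathfrak{p}_2^\sigma]=[\mathfrak{p}_2]^{-1}$ need not be trivial in $\Cl(\ord_D)$, so the two ideals of norm $2$ could a priori yield distinct module classes. The resolution I would pursue is that these classes are interchanged by the Galois involution $\sigma$, which simultaneously swaps the two archimedean places and the two factors of $\HH^2$; since $X_{D,(1,2)}$ is intrinsic and independent of the ordering of the embeddings $\iota,\iota_2$, this identifies the two candidate components into one irreducible surface. (Consistently, the finer \Teichmuller curve lying over it does break into the two pieces $W_D(4)^1,W_D(4)^2$ of Theorem~\ref{thm:intoXS}.) The step needing the most care is checking that the $\sigma$-twist is a genuine isomorphism of polarized modules, and not merely a matching of the two ideal classes.
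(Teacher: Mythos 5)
Your proposal is correct and follows essentially the same route as the paper: normalize the polarized $\ord_D$-module as an ideal direct summand against $\ord_D^\vee$ with the trace pairing, read the polarization type off the norm of the ideal so that non-emptiness reduces to the existence of an ideal of norm $2$ (hence to $D\not\equiv 5\,(8)$), and in the split case $D\equiv 1\,(8)$ identify the two candidate components via the Galois swap. The step you flag as delicate is exactly what the paper supplies: since $\ord_D^\vee=(\ord_D^\vee)^\sigma$ and the trace pairing is Galois-invariant, the explicit map $(u_1,u_2)\mapsto(u_2,u_1)$ together with $(z_1,z_2)\mapsto(z_2,z_1)$ is an isomorphism of polarized abelian surfaces $\cx^2/\Lambda^{\frb}_{(z_1,z_2)}\to\cx^2/\Lambda^{\frb^\sigma}_{(z_2,z_1)}$, so the two Hilbert modular surfaces have the same image in moduli.
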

\par
\begin{proof}
Suppose that $(A = \cx^2/\Lambda,\cL)$ is an abelian variety with real multiplication
and a $(1,2)$-polarization $\cL$. Then $\Lambda$ is a rank-two $\ord_D$-module with
symplectic pairing of signature $(1,2)$.
By \cite{Bass62} such a lattice splits as a direct sum of $\ord_D$-modules. Moreover, 
although $\ord_D$ is not a Dedekind domain for $D$ a non-fundamental discriminant, 
any rank-two $\ord_D$-module is isomorphic to $\fra \oplus \ord_D^\vee$ for some
fractional $\ord_D$-ideal $\fra$. The isomorphism can moreover be chosen so that
the symplectic form is mapped to the trace pairing \eqref{eq:tracep}. In this normalization, 
if $N^F_\ratls(\fra) = h$, then the polarization has degree $h^2$. Since all polarizations 
of degree four are of type $(1,2)$, the locus  $X_{D,(1,2)}$ is non-empty if and only
if there is a fractional $\ord_D$ ideal $\fra$ with $N^F_\ratls(\fra) = 2$, 
i.e. if and only if  $D \not \equiv 5\,\, (8)$.
\par
Generalizing \eqref{eq:lazz} we define for any ideal $\frb$ and $\zz \in \HH^2$ the lattice
\be \label{eq:labzz}
 \Lambda^\frb_\zz = \{(a + bz_1, a^\sigma+b^\sigma z_2)^T \,|\,\, a \in \frb, b \in \ord^\vee \}.
\ee
For $D \equiv 0,4\,\, (8)$ there is exactly one prime ideal $\fra$ of norm two, so
as in the principally polarized case one shows that $X_{D,(1,2)} = \HH^2/\SL_2(\fra \oplus \ord_D^\vee)$
is connected.
\par
For $D \equiv 1\,\, (8)$ the prime two splits $(2) = \fra \fra^\sigma$ into two prime ideals
of norm two. Consequently, both $\HH^2/\SL_2(\fra \oplus \ord_D^\vee)$ and 
$\HH^2/\SL_2(\fra^\sigma \oplus \ord_D^\vee)$  parameterize  $(1,2)$-polarized abelian varieties with
real multiplication by $\ord_D$ given by $\cx^2/ \Lambda^\frb_\zz$ and $\cx^2/ \Lambda^{\frb^\sigma}_\zz$.
Since $\ord_D^\vee = (\ord_D^\vee)^\sigma$,  the map $(u_1,u_2) \mapsto (u_2,u_1)$ defines for any fractional
ideal $\frb$ an isomorphism 
\be \label{eq:isotau}
\cx^2/ \Lambda^\frb_{(z_1,z_2)} \to \cx^2/ \Lambda^{\frb^\sigma}_{(z_2,z_1)}
\ee
of abelian varieties. Consequently, $\HH^2/\SL_2(\fra \oplus \ord_D^\vee)$ and 
$\HH^2/\SL_2(\fra^\sigma \oplus \ord_D^\vee)$ parameterized the same subset of the 
moduli space of $(1,2)$-polarized abelian varieties.
\end{proof}
\par

\paragraph{Period matrices.} 

We choose from now on a symplectic basis of $(\om_1,\om_2)$ of $\ord_D$, 
such that $(\om_1, 2\om_2)$ is a basis of $\fra$ and such that the sign convention
$ \Tr^K(\om_1\om_2^\sigma) = +\sqrt{D}$ holds. Let $P=\mat1002$ be the diagonal 
matrix of the type of the polarization
we are interested in. With this choice of generators, the basis 
$$a_1 = (\om_1,0)^T, a_2 = (2\om_2,0)^T, b_1=(0,\om_2^\sigma/\sqrt{D} ) ^T, b_2 = (0,\om_1^\sigma/\sqrt{D} )^T. $$
is in standard form with respect to the trace polarization. In this basis the period matrix is
$$\Pi_\uu = \left(\begin{matrix} \om_1 & 2\om_2 & \om_2^\sigma z_1/\sqrt{D} & -\om_1^\sigma z_2/\sqrt{D} 
\\ \om_1^\sigma & 2\om_2^\sigma & -\om_2 z_1/\sqrt{D} & \om_1z_2/\sqrt{D} \\
\end{matrix} \right) = \left(\begin{matrix} B & \mat {z_1}00 {z_2} A^T \\
\end{matrix} \right),$$
where 
\be \label{eq:SMEAP}  A=PB^{-1}\,,\qquad B = \mat{\om_1}{2\om_2}{\om_1^\sigma}{2\om_2^\sigma} \;.\ee 
The change of basis $\vv = (v_1,v_2)^T = A\cdot \uu$ results in multiplying $\Pi_\uu$ 
from the left by $A$ and gives
\be \label{eq:Pivv12} \Pi_\vv = \left(\begin{matrix} P & A\mat {z_1}00 {z_2} A^T \\ \end{matrix} \right), \ee  
the period matrix for a $(1,2)$-polarized abelian surface with real multiplication in standard coordinates.

\paragraph{Euler characteristics.} The notion Euler characteristic (of curves and of Hilbert modular surfaces)
refers throughout to orbifold Euler characteristics. Let  $D=f^2 D_0$ be the factorization of the discriminant into a 
fundamental discriminant $D_0$ and a square of $f \in \nats$. A reference to compute 
the Euler characteristic of the Hilbert modular surfaces $X_D$,
including the case of non-fundamental
discriminants, is \cite[Theorem~2.12]{bainbridge07}. His formula is
\be \label{eq:chiXD}
\chi(X_D) = 2f^3 \zeta_{\ratls(\sqrt{D})}(-1) \left(\sum_{r|f} \legendre{D_0}{r} \frac{\mu(r)}{r^2} \right),
\ee
where $\mu$ is the M\"obius function and $\legendre{a}{b}$ is the Jacobi symbol.
\par 
The groups $\SLOD$ and $\SL_2(\fra \oplus \ord_D^\vee)$ are commensurable. To determine
the indices in their intersection, 
we conjugate both groups by $\mat {\sqrt{D}} 0 0 1$. This takes $\SLOD$ into $\SL(\ord_D \oplus \ord_D)$, 
and  $\SL_2(\fra \oplus \ord_D^\vee)$ into  $\SL_2(\fra \oplus \ord_D)$. The two images under conjugation
contain
\be
\Gamma_\fra = \left\{ \mat abcd \in \SL_2(K) \,:\, a,d \in \ord_D, \,
b \in \fra,\, c \in \ord_D \right\}.
\ee
with a finite index that we now calculate. We  reduce mod $\fra$. Since 
$\ord_D/\fra \cong \FF2$, the group $\SL(\ord_D \oplus \ord_D)$
reduces to the full group $\SL_2(\FF2)$ and $\Gamma_\fra$ reduces to the group of lower
triangular matrices. Thus 
$[\SLOD : \Gamma_\fra] = 3.$
\par
Suppose the fractional ideal $\fra$ is not invertible, i.e.\ $2 | f$, hence
$D/4$ is also the discriminant of a ring. We claim that $\fra^{-1} = \ord_{D/4}$.
In fact, $\fra = \langle 2,x \rangle_{\zed}$ for some $x$ of norm $4$. Then 
$\fra^{-1} = \langle 1, x^\sigma/2 \rangle_{\zed} = \ord_{D/4}$.
Using the claim we consider $\Gamma_\fra$ and $\SL_2(\fra \oplus \ord_D)$ as
subgroups of $\SL(\ord_{D/4} \oplus \ord_{D/4})$. Both contain the kernel of
the reduction mod $\fra$ to $\SL_2(\ord_{D/4}/\fra)$. The images are groups of lower triangular
matrices of size $2$ and $4$ respectively. We conclude that 
$[\SL(\fra \oplus \ord_D) : \Gamma_\fra] = 2$ in this case.
\par
Suppose that $\fra$ is invertible. If $\fra$ is a principal ideal, generated by $\lambda$, 
then conjugation by ${\rm diag}(\lambda,1)$ takes  $\SL_2(\fra \oplus \ord_D)$ into
 $\SL(\ord_D \oplus \ord_D)$ and $\Gamma_\fra$ into the transposed group. Hence
$[\SL(\fra \oplus \ord_D) : \Gamma_\fra] = 3$ in this case. The general case with
$\fra$ invertible behaves similarly. Outside the prime(s) lying over the ideal $(2)$ the
groups $\SL(\fra \oplus \ord_D)$ and $\Gamma_\fra$ agree. We localize at $\fra$
and take the completion. Since $2\!\not| f$, the ideal $\fra$ now becomes
principal and by the preceding argument $[\SL(\fra \oplus \ord_D) : \Gamma_\fra]$ divides $3$.
Since the two groups are not the same, equality holds. Altogether we have
shown the following proposition.
\par
\begin{prop} \label{prop:vol12}
The Euler characteristic of $X_{D,(1,2)}$ and of $X_D$ are related as follows.
\be
\frac{\chi(X_{D,(1,2)})}{\chi(X_D)} = \left\{\begin{array}{lcl}
1 & \text{if} & 2\! \not | \,f \\
3/2 & \text{if} & 2 \,\,| \, f. \\
\end{array} \right.
\ee
\end{prop}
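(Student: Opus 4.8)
The guiding principle is that the orbifold Euler characteristic is multiplicative under finite covers: for commensurable lattices $\Gamma' \subset \Gamma$ acting on $\HH^2$ with $[\Gamma:\Gamma'] = n$, the quotient map $\HH^2/\Gamma' \to \HH^2/\Gamma$ is a degree-$n$ orbifold covering, so $\chi(\HH^2/\Gamma') = n\,\chi(\HH^2/\Gamma)$. Granting this, all the geometric content needed for the proposition has already been produced in the index computations above, and only an assembly remains. First I would note that conjugation by $\diag(\sqrt D,1)$ is a group isomorphism, hence preserves indices, and induces an isomorphism of the quotient orbifolds; since orbifold Euler characteristic is insensitive to such isomorphisms (in particular to the orientation behaviour on the second factor) we may replace the groups by their conjugates. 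Thus $\chi(X_D) = \chi(\HH^2/\SL(\ord_D \oplus \ord_D))$ and $\chi(X_{D,(1,2)}) = \chi(\HH^2/\SL_2(\fra \oplus \ord_D))$.

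The assembly then uses the common finite-index subgroup $\Gamma_\fra$, which by construction is contained in both $\SL(\ord_D \oplus \ord_D)$ and $\SL_2(\fra \oplus \ord_D)$. Applying multiplicativity to the two inclusions gives
\[
[\SL(\ord_D \oplus \ord_D):\Gamma_\fra]\,\chi(X_D) \;=\; \chi(\HH^2/\Gamma_\fra) \;=\; [\SL_2(\fra \oplus \ord_D):\Gamma_\fra]\,\chi(X_{D,(1,2)}).
\]
The first index equals $3$ in every case, while the second equals $2$ when $2\mid f$ and $3$ when $2\nmid f$. Dividing yields
\[
\frac{\chi(X_{D,(1,2)})}{\chi(X_D)} \;=\; \frac{3}{[\SL_2(\fra \oplus \ord_D):\Gamma_\fra]},
\]
which is $1$ for $2\nmid f$ and $3/2$ for $2\mid f$, exactly the two cases of the statement.

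The genuine difficulty lies not in this final division but in the value of the index $[\SL_2(\fra \oplus \ord_D):\Gamma_\fra]$, which is where the parity of the conductor $f$ enters. Because $\ord_D$ fails to be a Dedekind domain for non-fundamental $D$, the norm-two ideal $\fra$ need not be invertible, and it is invertible precisely when $2\nmid f$. The non-invertible case $2\mid f$ is handled through the identification $\fra^{-1} = \ord_{D/4}$ together with reduction modulo $\fra$, giving index $2$; the invertible case is reduced to the principal one by localizing and completing at $\fra$, giving index $3$. These are exactly the computations carried out above, so nothing beyond feeding the two index values into the displayed equality is required to conclude.
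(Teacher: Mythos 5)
Your proposal is correct and follows essentially the same route as the paper: conjugate by $\diag(\sqrt D,1)$, compare both groups to the common finite-index subgroup $\Gamma_\fra$, compute the two indices ($3$ on the principally polarized side; $2$ or $3$ on the $(1,2)$-side according to whether $\fra$ is invertible, i.e.\ to the parity of $f$), and divide. The only difference is presentational: you make explicit the multiplicativity of the orbifold Euler characteristic under finite-index subgroups, which the paper leaves implicit in its closing ``altogether we have shown.''
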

\par

\paragraph{Theta functions.}
Let $(A,\cL)$ be a $P$-polarized abelian surface, where $P$ is a diagonal matrix, 
the type of the polarization. If we fix a basis of homology so that the
polarization is in standard form $\mat0P{-P}0$, then 
 $A = \cx^2/\Lambda$, where $\Lambda = (\begin{matrix} P & Z\end{matrix}) \cdot \zed^4$
and $Z \in \HH_2$.  
The classical theta functions with characteristic $(c_1,c_2)$ are given (using
standard coordinates)  by
\be \label{eq:thclass}
\theta_{c_1,c_2}(Z,\vv) = \sum_{\xx \in \zed^2} \e(\pi i (\xx+c_1)^TZ(\xx+c_1) 
+ 2\pi i(\xx+c_1)^T(\vv+c_2)). \ee
The main result we need is that for $(c_1,c_2)$ fixed, the
set 
\be \label{eq:thbasis}
 \left\{ \theta_{c_1 + m_1,c_2}(Z,\vv),  \quad m_1 \in P^{-1}\zed^2/\zed^2\right\}  \ee 
forms a basis of a translate of $\cL$ by the point $c = Zc_1 + c_2$, see \cite[Section~3]{bl}).
\par
If one wants to work out explicitly the modular forms $G_D^X$ defining $W_D^X$
(see Theorem~\ref{thm:introGDX}) and the corresponding modular form for
the genus three construction (see Proposition~\ref{prop:GDS}) 
one has to restrict these modular forms via a Siegel modular embedding
and translate by an appropriate theta characteristic.
We will determine this characteristic in Proposition~\ref{prop:WD6linequiv} resp.\  at the end 
of Section~\ref{sec:12pol} explicitly.

\paragraph{Good compactifications.} Let $\ol{X_D}$  (resp.\ $\ol{X_{D,(1,2)}}$) denote a good 
compactification of $X_D$  (resp.\ of $X_{D,(1,2)}$) in the sense of \cite{mumford77}
with boundary divisor $B$. Hirzebruch's minimal smooth compactification is
good, the one constructed by Bainbridge (\cite{bainbridge07}) to study the 
curves $W_D$ is good, too. Let $\omega_i$ for $i=1,2$ denote the line bundles 
of the natural foliations $\cF_1 = \{\HH \times \{pt\}\}$ and $\cF_2=\{\{pt\} \times \HH\}$ 
of the Hilbert modular surface $X_D$. They extend on the good compactification to line
bundles with the property (\cite{mumford77})
\be \label{eq:OMXD}
\Omega_{\ol{X_D}}^1(\log B) = \omega_1 \oplus \omega_2.
\ee
These compactifications will be used to perform intersection theory.
We denote the class of line bundles $\omega_i$ (or divisors like $W_D^X$) in the intersection ring 
of $\ol{X_D}$ (i.e.\ up to numerical equivalence) by $[\omega_i]$ (resp.\ by $[W_D^X]$).

\paragraph{\Teichmuller curves and Kobayashi curves.}
A {\em \Teichmuller curve} is an algebraic curve $C$ with a generically injective map $C \to \moduli$ 
to the moduli stack of curves that is a totally geodesic subvariety for the \Teichmuller
metric. On $\moduli$ the \Teichmuller metric agrees with
the Kobayashi metric and thus \Teichmuller curves are also Kobayashi curves in 
the following sense. 
\par
For any algebraic varity $Y$ we define a {\em Kobayashi curve $C$ in Y} to be an algebraic
curve $C$ together with a generically injective map $C \to Y$ that is totally geodesic
for the Kobayashi metric. In this paper we will apply this notion (besides for 
$\moduli$) only for Hilbert modular surfaces $X_D$. The universal covering of
$X_D$ is covered by Kobayashi geodesics, but only few of them descend to
algebraic curves, i.e.\ to Kobayashi curves on $X_D$.
\par
We recall the following characterization 
of Kobayashi curves from \cite{mvkobayashi}
to illustrate the various ways to interpret these curves. We will only need
the implication i) to iv) in the sequel. Note that we changed terminology from 
\cite{mvkobayashi}, where the notion Kobayashi geodesic was used for what
we now call Kobayashi curve.
\par
\begin{prop} Let $C \to X_{D,P}$ be an algebraic curve in a Hilbert modular surface for some
polarization $P$ with completion $\ol{C} \to \ol{X_{D,P}}$. Then the following conditions are
equivalent.
\begin{itemize}
\item[i)] The curve $C$ is a Kobayashi curve.
\item[ii)] The variation of Hodge structures (VHS) over $C$ has a rank two subsystem that
is maximal Higgs (see \cite{mvkobayashi} for the definitions).
\item[iii)] For (at least) one of the two foliation classes $\omega_i$ we have $[\omega_i][\ol{C}] = \chi(W_D^X)$.
\item[iv)] There exists (at least) one (of the two) natural foliations $\cF$ of $X_{D,P}$
such that the curve $C$ is everywhere transversal to $\cF$.
\item[v)] The inclusion $T_C(\log \ol{C} \setminus C) \to T_{X_{D,P}}(\log B)|_C$ splits.
\end{itemize}
\end{prop}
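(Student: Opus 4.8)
The plan is to reduce the entire equivalence to the product structure of the universal cover $\HH^2$ and then to transport the resulting differential-geometric dichotomy into Hodge-theoretic language. I would begin with the analytic heart, the equivalence i) $\Leftrightarrow$ iv). Lift the map $C \to X_{D,P}$ to a holomorphic map $f=(f_1,f_2)\colon \tilde{C} \to \HH^2$ from the universal cover. Since the Kobayashi metric of a product is the \emph{maximum} of the Kobayashi metrics of the two factors, and each factor carries the hyperbolic metric, the Schwarz--Pick lemma gives $f_i^{*}(\mathrm{hyp}) \le \mathrm{hyp}_{\tilde C}$ for $i=1,2$, and hence $f^{*}(\mathrm{Kob}) \le \mathrm{hyp}_{\tilde C}$, with equality at a point exactly when at least one of the $f_i$ is an infinitesimal isometry there. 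A standard connectedness/rigidity argument then forces one of the two projections, say $f_1$, to be a hyperbolic covering of $\HH$. Such an $f_1$ is in particular an immersion, so $C$ is everywhere transversal to the complementary foliation $\cF_2$, giving i) $\Rightarrow$ iv). The converse implication is the delicate one and is handled through the Higgs-theoretic steps below.

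Next I would set up the variation of Hodge structures. Over $X_{D,P}$ the weight-one VHS $\mathbb{V}$ attached to the family of abelian surfaces splits, under the real multiplication by $\ord_D$, into two rank-two sub-variations $\mathbb{V}_1 \oplus \mathbb{V}_2$ indexed by the embeddings $\iota,\iota_2$; their Hodge line bundles are precisely the summands $\omega_1,\omega_2$ of \eqref{eq:OMXD}, and the Higgs (Kodaira--Spencer) field of $\mathbb{V}_i$ records the tangent direction of the foliation $\cF_i$, being up to a twist the derivative of the $i$-th coordinate of the period map. Restricting to $\ol{C}$, the Higgs field of $\mathbb{V}_i$ becomes a map of line bundles $\theta_i\colon \omega_i|_{\ol C} \to \omega_i^{\vee}|_{\ol C}\otimes \Omega^1_{\ol C}(\log(\ol C\setminus C))$. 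Transversality of $C$ to the complementary foliation is exactly the statement that $\theta_i$ is nowhere vanishing on $C$; being a nonzero map of line bundles over the curve it is then injective as a sheaf map, and the equality case of the Arakelov/Higgs inequality upgrades this to an isomorphism of the logarithmic (Deligne) extensions, i.e. $\mathbb{V}_i|_C$ is maximal Higgs. This yields iv) $\Leftrightarrow$ ii); comparing degrees across $\theta_i$ then gives $[\omega_i]\cdot[\ol C]$ equal to half the degree of $\Omega^1_{\ol C}(\log(\ol C\setminus C))$, which is the Arakelov-bound equality of iii), so that ii) $\Leftrightarrow$ iii).

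For the remaining conditions I would use the duality of \eqref{eq:OMXD}: $T_{X_{D,P}}(\log B) = \omega_1^{\vee}\oplus \omega_2^{\vee}$, so the inclusion $T_C(\log(\ol C\setminus C)) \hookrightarrow T_{X_{D,P}}(\log B)|_C$ splits holomorphically if and only if the composite with projection to one summand $\omega_i^{\vee}|_{\ol C}$ is an isomorphism, which is once more transversality to the complementary foliation; this identifies v) with iv) and interprets the splitting as the vanishing of the second fundamental form in the complementary direction. Finally I would close the loop from ii)/iv) back to i): a rank-two maximal Higgs sub-variation over $\ol C$ is uniformizing (Viehweg--Zuo), so the corresponding projection $f_i\colon \tilde C \to \HH$ is a hyperbolic isometry, whence $f$ is a Kobayashi isometric immersion and $C$ is a Kobayashi curve.

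The main obstacle is exactly this last upgrade, from transversality or maximal Higgs back to the metric statement i). Transversality is an open condition that a priori only records that one projection is an immersion; turning this into a genuine hyperbolic isometry relies on the uniformization theorem for maximal Higgs rank-two variations and on careful bookkeeping at the cusps, where the logarithmic extensions must be used and the Arakelov equality read with the correct parabolic/orbifold contributions. I also expect a technical point in verifying that the real-multiplication eigen-decomposition $\mathbb{V}=\mathbb{V}_1\oplus\mathbb{V}_2$ extends over the boundary $B$ compatibly with \eqref{eq:OMXD}, so that the intersection number $[\omega_i]\cdot[\ol C]$ in iii) genuinely computes the degree of the extended Hodge bundle entering the Arakelov inequality.
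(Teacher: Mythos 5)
The paper's own ``proof'' of this proposition is a single remark: it follows by specializing \cite[Theorem~1.2]{mvkobayashi}, the only simplification being that over a Hilbert modular surface the weight-one VHS decomposes a priori into two rank-two summands $\mathbb{V}_1\oplus\mathbb{V}_2$, so these are the only candidates for a maximal Higgs sub-VHS. Your write-up reconstructs that cited argument in the special case, and its architecture (Schwarz--Pick on the product $\HH^2$ for i)$\Rightarrow$iv), identification of the Higgs fields with the foliation directions, Arakelov equality, splitting of the logarithmic tangent sequence, and Viehweg--Zuo uniformization to return to i)) is the intended one. However, as written there is one genuine gap and one normalization error.

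The gap is in iv)$\Rightarrow$ii) (and equally in iv)$\Rightarrow$v)). Transversality of $C$ to the complementary foliation only says that the Higgs field $\theta_i$ is nonvanishing on the \emph{open} curve $C$, whereas maximal Higgs --- and the splitting in v), which over a connected projective curve forces one component of the inclusion to be an isomorphism of line bundles --- requires $\theta_i$ to be an isomorphism on all of $\ol C$, including at the points of $\ol C\setminus C$. Your sentence ``the equality case of the Arakelov/Higgs inequality upgrades this to an isomorphism'' is circular: injectivity of $\theta_i$ as a sheaf map yields only the Arakelov \emph{inequality}, and you may not invoke the equality case before proving equality. The missing input is local at the cusps: every point of $\ol C\setminus C$ lies over the boundary $B$, where the local monodromy is (a power of) a unipotent element $\left(\begin{smallmatrix}1&\mu\\0&1\end{smallmatrix}\right)$ with $\mu\neq 0$ totally real; since both $\iota(\mu)$ and $\iota_2(\mu)$ are nonzero, the monodromy acts nontrivially on \emph{both} summands $\mathbb{V}_i$, and the residue of $\theta_i$ at such a point is the induced nonzero nilpotent map. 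Only with this does ``nonvanishing on $C$'' become ``isomorphism on $\ol C$''. Separately, your identification of $\omega_i$ with the Hodge line bundle of $\mathbb{V}_i$ is off by a square: by \eqref{eq:OMXD} the $\omega_i$ are the foliation summands of $\Omega^1(\log B)$, and the Kodaira--Spencer isomorphism on the surface gives $\lambda_i^{\otimes 2}\cong\omega_i$ for the actual Hodge bundles $\lambda_i$. Consequently the numerical condition in iii) reads $[\omega_i]\cdot[\ol C]=-\chi(C)$, the \emph{full} degree of $\Omega^1_{\ol C}(\log(\ol C\setminus C))$ (the ``$\chi(W_D^X)$'' in the statement is evidently a slip for this, cf.\ the identity $-[W_D^X][\omega_1]=\chi(W_D^X)$ used in the proof of Theorem~\ref{thm:eulerX}), not half of it as you assert.
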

\par
\begin{proof}
The proofs are simplifications of \cite[Theorem~1.2]{mvkobayashi}, since for Hilbert modular surfaces 
the variation of Hodge structures already decomposes into rank two summands. Only these
two summands are candidates for the maximal Higgs sub-VHS.
\end{proof}
\par
\Teichmuller curves are generated as $\SL_2(\reals)$-orbits of {\em flat surfaces} $(X,\omega)$, 
i.e.\ of pairs of a Riemann surface $X$ and a non-zero holomorphic one-form $\omega$.
The moduli space of flat surfaces is a vector bundle minus the zero section over $\moduli$, 
denoted by $\omoduli$. Its quotient by the action of $\cx^*$ is denoted by $\pomoduli$. 
\par
The \Teichmuller curve is isomorphic to $\HH/\SL(X,\omega)$, where
$\SL(X,\omega)$ is the {\em affine group} of the flat surface, i.e.\ the group of matrix
parts of homeomorphisms of $X$ that are affine maps in the $\omega$ charts. At the
time of writing there is (still) no effective algorithm known to
determine $\SL(X,\omega)$, even if this group is known beforehand to be
a lattice in $\SL_2(\reals)$. Examples of flat surfaces generating \Teichmuller
curves are given in Figure~\ref{cap:WDLSX}. 
We explain more conceptually how these flat surfaces were constructed as Prym covers in the next section.
See e.g.\ \cite{masur02} or \cite{moelPCMI} for more background on 
flat surfaces, the $\SL_2(\reals)$-action and \Teichmuller curves.

\section{Prym varieties} \label{sec:prym}

A Prym curve is a (connected smooth algebraic) curve $X$ together with
an involution $\rho$ and a supplementary condition. This supplementary condition
is different in two of our primary references. In \cite{bl} it is
required that the Prym variety $\Prym(X) = \Prym(X,\rho)$ introduced
below naturally acquires a multiple of a principal polarization. In 
\cite{mcmullenprym} the author focusses on the case that $\Prym(X)$ is two-dimensional, i.e.\
an abelian surface. Although the first terminology seems to be more widely
used, we stick to the second since we are interested in Hilbert modular {\em surfaces}
and applications to curves on these surfaces.
\par
\paragraph{Prym varieties.}
Suppose that $X$ is a curve with an involution $\rho$ and we let  $Y = X/\langle \rho \rangle$.
The quotient map $\pi:X \to Y$ induces a map
$$q: \Jac(X) \to \Jac(Y)$$
and we let $\Prym(X)$ be the connected component of the identity of the kernel of $q$.
This abelian variety acquires a polarization $\cO_{\Jac(X)}(\Theta)|_{\Prym(X)}$
by restriction of the principal polarization on $\Jac(X)$. We call this
polarized abelian variety the {\em Prym variety} of $(X,\rho)$.
\par
Originally, Prym varieties were invented to construct more principally
polarized abelian varieties using curve (and covering) theory than just
Jacobians. Thus, it was required that the polarization on $\Prym(X)$
has a multiple of a principal polarization. This holds for \'etale double
covers and genus two double covers ramified at two points.

\paragraph{Double covers with two-dimensional Prym variety.}
In the sequel we use the modified terminology of \cite{mcmullenprym} and
say that $(X,\rho)$ is a Prym pair if $\dim(\Prym(X))=2$. This happens
for $g(X) = 3$ with $4$ ramification points, for $g(X)=4$ with
two ramification points and also for $g(X)=5$ with $\rho$ fixed point free.
The last case is less suitable for the construction of \Teichmuller curves, 
so we disregard it here.
\par
If $g(X)=4$, the principal polarization of $\Jac(X)$
restricts to a polarization of type $(2,2)$ on $\Prym(X)$ by \cite[Corollary 12.1.5]{bl}. That is, 
$\cO_{\Jac(X)}(\Theta)|_{\Prym(X)}$ is a positive line bundle on $\Prym(X)$
of type $(2,2)$. Consequently, there is a line  bundle $\cL$ on $\Prym(X)$
that defines a principal polarization and such that $\cL^{\otimes 2} = \cO_{\Jac(X)}(\Theta)|_{\Prym(X)}$.
\par
Since $\cL$ is a polarization, the Euler characteristic is the product of the $d_i$
appearing in the type and there is no higher cohomology, i.e.\ 
we have $\chi(\cL^{\otimes 2}) = 2\cdot 2 = H^0(\Prym(X),\cL^{\otimes 2})$. By Riemann-Roch, 
the self-intersection number of $\cL^{\otimes 2}$ equals $8$ and hence by
adjunction the vanishing locus of any section of $\cL^{\otimes 2}$ is a curve of
arithmetic genus five (hence possibly a curve of genus $4$ with 2 points
identified to form a node).
\par


\paragraph{The Prym \Teichmuller curves $W_D(6)$ and $W_D(4)$.} 
For $g(X) = 4$ the {\em Prym eigenform
locus $\Omega E_D$} is defined in \cite{mcmullenprym} to be the subvariety in $\omoduli[4]$ of
flat surfaces $(X,\omega)$ such that $X$ admits a Prym involution $\rho$ 
such that $\Prym(X)$ admits real multiplication by $\ord_D$ and, finally,  
such that $\omega$ is an eigenform for real multiplication by $\ord_D$. 
The eigenform condition includes in particular, that $\omega$ is in the $-1$-eigenspace
of the action of $\rho$ on $H^0(X,\Omega_X^1)$. The intersection of 
 $\Omega E_D$ with the minimal stratum $\omoduli[4](6)$ is shown in 
\cite{mcmullenprym} to be (complex) two-dimensional.  Its image in $\pomoduli[4]$ is an algebraic 
curve $W_D(6)$ that projects isomorphically to a \Teichmuller curve  $\moduli[4]$ that will also be denoted by $W_D(6)$. 
We do not assume here that $W_D(6)$ is irreducible, but by \cite{manhlann} this is conjecturally 
indeed the case. These curves can be generated by $X$-shaped flat surfaces 
(see Figure~\ref{cap:WDLSX}). We use this to label their images in Hilbert modular
surfaces by an upper index $X$, see below.
\par
Fully similarly, for $g(X)=3$  the {\em Prym eigenform
locus} $\Omega E_D$ is defined in \cite{mcmullenprym} to be the subvariety in $\omoduli[3]$ of
flat surfaces $(X,\omega)$ such that $X$ admits a Prym involution $\rho$ 
such that $\Prym(X)$ admits real multiplication by $\ord_D$ and, finally, 
such that $\omega$ is an eigenform for real multiplication by $\ord_D$. The
intersection of  $\Omega E_D$ with the minimal stratum $\omoduli[3](4)$ is shown in 
\cite{mcmullenprym} to be two-dimensional. Its image in $\pomoduli[3]$ is a curve $W_D(4)$,
that projects to a \Teichmuller curve in $\moduli[3]$ the will also be denoted by $W_D(4)$. 
These curves can be generated by $S$-shaped flat surfaces
(see Figure~\ref{cap:WDLSX}) and  we use this to label their images in Hilbert modular
surfaces by an upper index $S$, see below.
\par
\paragraph{The Abel-Jacobi image}
In the following lemma the non-hyperellipticity is still parallel for
$W_D(6)$ and $W_D(4)$, but in all further steps the two cases differ.
We thus concentrate on $W_D(6)$ for the rest of this section.
\par
If we fix a point $P$ on $X$, then the composition of the Abel-Jacobi map
$X \to \Jac(X)$ based at $P$ with the dual of the inclusion $\Prym(X) \to \Jac(X)$
defines a map $\varphi: X \to \Prym(X)$, called the {\em Abel-Prym map} (based at $P$).
Note for comparison with the $W_D^S$ case that in this (standard) definition we have used 
the principal polarizations of $\Prym(X)$ and $\Jac(X)$ to obtain a
map $\Prym(X) \to \Jac(X)$.
\par
\begin{lemma} \label{le:abelprymemb} 
The curves $W_D(6)$ and $W_D(4)$ are disjoint from 
the hyperelliptic locus. \par
Moreover, for each curve $[X] \in W_D(6)$ the Abel-Prym 
map is an immersion outside the fixed points of $\rho$ and maps the two fixed points 
of $\rho$ to a single point in $\Prym(X)$.
\end{lemma}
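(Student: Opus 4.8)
The plan is to split the lemma into its two assertions. For the non-hyperellipticity, I would argue by contradiction, using the structure of the stratum together with the Prym involution. If some $[X] \in W_D(6)$ (resp.\ $W_D(4)$) were hyperelliptic, then $X$ would carry the hyperelliptic involution $\iota$, which is central in $\Aut(X)$ and therefore commutes with the Prym involution $\rho$. First I would analyze the interaction of these two involutions on $H^0(X,\Omega_X^1)$: the eigenform $\omega$ generating the \Teichmuller curve lies in the $(-1)$-eigenspace of $\rho$ and, being the one-form of the flat structure, is anti-invariant under $\iota$ as well (on a hyperelliptic surface every holomorphic one-form is anti-invariant under $\iota$). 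The combination $\iota\rho$ would then act on $X$ and I would examine its fixed-point data via Riemann--Hurwitz to derive a contradiction with the prescribed zero structure of $\omega$ (a single zero of order $6$ for $W_D(6)$, order $4$ for $W_D(4)$). The cleanest route is to observe that a $6$-fold (resp.\ $4$-fold) zero of an anti-invariant form must sit at a Weierstra\ss\ point; tracking how $\rho$ permutes the Weierstra\ss\ points, together with the fixed-point count of $\rho$ (four points for $g=3$, two points for $g=4$), should force an incompatibility of parities or fixed-point numbers. Alternatively I would invoke the explicit flat-surface description: the $X$-shaped and $S$-shaped surfaces of Figure~\ref{cap:WDLSX} have an evident symmetry realizing $\rho$, and one checks directly that no hyperelliptic symmetry is compatible with the eigenform being a $\rho$-anti-invariant form having the required single higher-order zero.

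For the second assertion I restrict to $W_D(6)$, so $g(X)=4$, $\rho$ has exactly two fixed points, and $\Prym(X)$ is an abelian surface. The Abel--Prym map $\varphi$ is the composition of the Abel--Jacobi map with (the dual of) the inclusion $\Prym(X)\hookrightarrow\Jac(X)$, and I would compute its differential via the pullback of one-forms. The point is that the cotangent space of $\Prym(X)$ is naturally identified with the $(-1)$-eigenspace $H^0(X,\Omega_X^1)^-$ of $\rho$, so $d\varphi$ at a point $Q\in X$ fails to be injective exactly when every anti-invariant one-form vanishes at $Q$. At a fixed point $Q_0$ of $\rho$, the involution acts on the cotangent line $T_{Q_0}^*X$ by $-1$ (since $\rho$ has isolated fixed points, its differential there is $-1$ on the tangent line), so \emph{invariant} forms must vanish at $Q_0$ while anti-invariant forms need not; hence the anti-invariant forms separate tangent directions at $Q_0$ and $\varphi$ is immersive there. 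Away from the fixed points I would show the base locus of $H^0(X,\Omega_X^1)^-$ is empty: since $X$ is not hyperelliptic (by the first part) the full canonical system is base-point-free and very ample, and the anti-invariant subsystem is large enough (it has dimension $g - g(Y) = 4 - 1 = 3$ by the Prym dimension count) that its base points can be located; a short Riemann--Roch computation on $Y$ and on $X$ should show the anti-invariant canonical subsystem has no base points off $\Fix(\rho)$, giving immersivity there.

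It remains to see that $\varphi$ identifies the two fixed points. This is the part I expect to be the genuine obstacle, and the key input is the Prym structure rather than a local computation. The two fixed points $Q_1,Q_2$ of $\rho$ satisfy $\rho(Q_i)=Q_i$, so in $\Jac(X)$ the divisor class $Q_1 - Q_2$ is anti-invariant, i.e.\ lies in $\Prym(X)$; to show $\varphi(Q_1)=\varphi(Q_2)$ I must show $Q_1 - Q_2$ maps to $0$ under the projection to $\Prym(X)$ used to define $\varphi$, equivalently that $Q_1 - Q_2$ is in the image of $q^*\Jac(Y)$. The natural way is to use that $\pi(Q_1)=\pi(Q_2)$ is forced only if the two branch points coincide downstairs, which is false, so instead I would argue at the level of the eigenform: the relative period of $\omega$ between $Q_1$ and $Q_2$ along any path and its $\rho$-image sum to zero because $\rho^*\omega=-\omega$, and since these are the only two zeros-region endpoints identified by the Prym structure, the class $Q_1-Q_2$ pairs trivially with $H^0(X,\Omega_X^1)^-$. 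I would make this precise by showing $\int_{Q_2}^{Q_1}\eta = 0$ for every anti-invariant $\eta$ (split the path using $\rho$ and use $\rho^*\eta=-\eta$ together with $\rho(Q_i)=Q_i$), which says exactly that $\varphi(Q_1)$ and $\varphi(Q_2)$ have the same image in $\Prym(X)$. The delicate point is justifying that the integral vanishes rather than merely lying in the period lattice; I expect this to follow from choosing the base point $P$ to be one of the fixed points and exploiting the symmetry $\rho$ of the chosen path, and this symmetric-path argument is where I would spend the most care.
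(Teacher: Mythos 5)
Your first assertion is handled essentially as in the paper, and your primary route is the right one: the paper also forms the third involution $\tau=h\circ\rho$ and derives the contradiction from the fact that $\omega$, being anti-invariant under both $h$ and $\rho$, is $\tau$-invariant --- concretely, $\Prym(X)=\Jac(X/\tau)$ by Mumford, so $\omega$ is a pullback from $X/\tau$, and a pulled-back one-form has $\tau$-invariant zero divisor and vanishes to \emph{odd} order at any fixed point of $\tau$, which is incompatible with a single zero of order $6$ (resp.\ $4$). But be warned that the route you call ``cleanest'' (Weierstra\ss\ points alone) cannot work: the stratum $\omoduli[4](6)$ has a nonempty hyperelliptic component, so a single $6$-fold zero of an $h$-anti-invariant form at a Weierstra\ss\ point is perfectly realizable. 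The contradiction must come from $\tau$, and in your sketch that step is exactly the one left as ``should force an incompatibility.''

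For the second assertion the paper simply cites \cite[Section~12.5]{bl}, with non-hyperellipticity as the needed hypothesis; your attempt to reprove it directly has two concrete problems. First, the dimension count is wrong: for $[X]\in W_D(6)$ Riemann--Hurwitz with two branch points gives $g(Y)=2$, so $\dim H^0(X,\Omega^1_X)^-=4-2=2$, not $3$. The Prym-canonical system is therefore only a pencil, identified with $H^0(Y,\omega_Y\otimes\eta)$ where $\deg(\omega_Y\otimes\eta)=3$ on a genus two curve; it acquires a base point precisely when $h^0(\eta)>0$, and excluding that is the genuinely nontrivial content of \cite[Proposition~12.5.3 and Corollary~12.5.5]{bl} where non-hyperellipticity is used --- ``a short Riemann--Roch computation'' does not yet close this. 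Second, the gap you flag in the period-integral argument is real and your proposed fix does not close it: the symmetry $\rho^*\eta=-\eta$ only shows that $2\int_\gamma\eta$ equals the period of the anti-invariant cycle $\gamma-\rho\gamma$, i.e.\ that $\varphi(Q_1)-\varphi(Q_2)$ is $2$-torsion, and choosing the base point at a fixed point does not remove the ambiguity. The clean argument avoids integrals: up to a constant translation, $\varphi(x)=\cO_X(x-\rho(x))$ as a divisor class (this is exactly how the paper uses $\varphi$ in the proof of Proposition~\ref{prop:WD6linequiv}), and for a fixed point $x-\rho(x)=0$ identically, so both fixed points land at the same point of $\Prym(X)$ with no torsion ambiguity.
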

\par
\begin{proof}
Suppose $(X,\rho)$ was hyperelliptic with hyperelliptic involution $h$.
By the uniqueness of the hyperelliptic involution $\langle h,\rho \rangle \cong (\zed/2)^2$.
In particular $\tau = h\circ \rho$ is another involution. Moreover, by \cite{mumfordprym}
$\Prym(X) = \Jac(X/\tau)$ and hence the one-form $\omega$ is a pullback from $X/\tau$.
This contradicts that $\omega$ has a single zero of order $6$ resp.\ of order $4$.
\par
The disjointness from the hyperelliptic locus is the hypothesis needed
in \cite[Section~12.5]{bl} to deduce the remaining claims.
\end{proof}
\par
For the construction of modular forms using theta functions below we need a more precise
description of the Abel-Prym image in terms of the theta divisor on
$\Jac(X)$ and thus the theta divisor on $\Prym(X)$. Let $W^3 \subset \Pic^3(X)$ denote
the canonical theta-divisor in the Picard group of $X$, i.e.\ the image of the $3$-fold 
symmetric product of $X$
in $\Pic^3(X)$.
\par
\begin{prop} \label{prop:WD6linequiv}
Let $(X,\omega)$ be a surface in $W_D(6)$ and ${\rm div}(\omega)=6P$.
Then the spin structure determined by $\cO_X(3P)$ is even, 
i.e.\ $h^0(X,\cO_X(3P)) = 2$.
\par
Suppose the Abel-Prym map is based at $P$. Then the line bundles 
$\cO_A(\varphi(X))$ and $\cO_{\Jac(X)}(\Theta)|_{A} = \cL^2$ are
linearly equivalent, where $\Theta$ is the translate of $W^{3} \subset
\Pic^{3}(X)$ by $-3P$.
\end{prop}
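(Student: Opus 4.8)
The plan is to prove the two assertions in turn, using the first to control the second. For the spin structure I would begin by observing that $P$ must be a fixed point of $\rho$: since $\omega$ lies in the $(-1)$-eigenspace, $\rho^*\omega=-\omega$ has the same divisor as $\omega$, so the divisor ${\rm div}(\omega)=6P$ is $\rho$-invariant and $\rho(P)=P$. From ${\rm div}(\omega)=6P$ we get $\cO_X(3P)^{\otimes 2}=\cO_X(6P)=K_X$, so $\cO_X(3P)$ is a theta characteristic and it remains only to compute $h^0$. For this I would analyse the vanishing orders at $P$ of the two $\rho$-eigenspaces of $H^0(X,K_X)$. With $Y=X/\langle\rho\rangle$ (of genus $2$ by Riemann--Hurwitz) one has $H^0(X,K_X)^+=\pi^*H^0(Y,K_Y)$ and $\dim H^0(X,K_X)^-=2$ with $\omega\in H^0(X,K_X)^-$. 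In a local coordinate $t$ at $P$ with $\rho\colon t\mapsto -t$, a form pulled back from $Y$ is locally $h(t^2)\,2t\,dt$ and vanishes to odd order, while an anti-invariant form $f(t)\,dt$ has $f$ even and vanishes to even order. Since $K_X$ is base-point free the minimal order $0$ occurs, and being even it lies in the $(-1)$-part; as $\omega$ already realizes the maximal order $6$, the $(-1)$-part contributes orders $\{0,6\}$ and the $(+1)$-part two odd orders from $\{1,3,5\}$. Because the two eigenspaces contribute orders of opposite parity, no global differential vanishes to order exactly $2$, so $3$ is a non-gap at $P$: there is a function with a pole of exact order $3$, whence $h^0(X,\cO_X(3P))=2$ and the spin structure is even.

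For the linear-equivalence statement I would reduce to a comparison of divisor classes on $A=\Prym(X)$. As $\cO_{\Jac(X)}(\Theta)|_A=\cL^2$ is the defining identity of $\cL$, the substance is to prove $\cO_A(\varphi(X))\cong\cO_{\Jac(X)}(\Theta)|_A$, i.e. that $\varphi(X)$ is linearly equivalent to the restriction of the symmetric theta divisor $\Theta=W^3-3P$. The underlying algebraic-equivalence class I would extract from the standard analysis of the Abel--Prym map in \cite[Section~12.5]{bl} (equivalently from \Poincare's formula applied through the factorization of $\varphi$ as an Abel--Jacobi map followed by the dual of $\Prym(X)\hookrightarrow\Jac(X)$): the image $\varphi(X)$ generates $A$ and represents twice the principal polarization, which numerically is $\cL^2$. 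As a consistency check, $\varphi(X)^2=8=(\cL^2)^2$, and since by Lemma~\ref{le:abelprymemb} the map $\varphi$ identifies the two fixed points of $\rho$ to a single node and is otherwise an immersion, $\varphi(X)$ has arithmetic genus $5$, which matches the self-intersection through adjunction.

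The genuine obstacle is to upgrade this numerical identity to an actual linear equivalence, i.e. to rule out a translation by a $2$-torsion point. Here the anti-equivariance $\varphi\circ\rho=[-1]\circ\varphi$ shows that $\varphi(X)$ is symmetric, and since both fixed points of $\rho$ map to $\varphi(P)=0$ its node sits at the origin; thus $\cO_A(\varphi(X))$ is a symmetric line bundle in the same class as the symmetric bundle $\cL^2$, and the two differ at most by a symmetric $2$-torsion twist. To pin this down I would invoke Mumford's description of symmetric line bundles via the induced quadratic form on $A[2]$ and compare multiplicities at the $2$-torsion points. By the Riemann singularity theorem the multiplicity of $\Theta$ at $0$ equals $h^0(X,\cO_X(3P))=2$, exactly the even value forced by the first part, matching the multiplicity $2$ of the node of $\varphi(X)$ at $0$; propagating this parity comparison across all of $A[2]$ forces the twist to be trivial. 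This last bookkeeping---translating the evenness of the spin structure $\cO_X(3P)$ into the vanishing of the $2$-torsion ambiguity---is the step I expect to require the most care, and it is precisely the point at which the first assertion of the proposition is used.
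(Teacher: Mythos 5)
Your first half is correct and takes a genuinely different route from the paper: you determine the parity by a local eigenspace analysis of $H^0(X,K_X)$ at the fixed point $P$ (invariant forms vanish to odd order, anti-invariant forms to even order, the anti-invariant part realizes orders $\{0,6\}$, so $2$ is not a vanishing order and $3$ is a non-gap), whereas the paper computes the parity via winding numbers on the two possible cylinder decompositions listed in \cite{manhlann} and then applies Clifford's theorem. Your version is more self-contained and avoids the flat-geometry input; just note that the final step ``whence $h^0(X,\cO_X(3P))=2$'' still needs the upper bound $h^0\le 2$, which follows from non-hyperellipticity (Lemma~\ref{le:abelprymemb}) or Clifford, since $h^0(3P)\ge 3$ would force $2$ to be a non-gap.

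The second half has a genuine gap exactly where you flag it. After algebraic equivalence (which you and the paper both obtain from Welters' criterion), the whole content is to rule out a nontrivial translation, and your proposed mechanism --- Mumford's quadratic form on $A[2]$ via multiplicity parities of symmetric divisors --- is only evaluated at the origin. But the invariant $x\mapsto(-1)^{\operatorname{mult}_x D-\operatorname{mult}_0 D}$ is identically $1$ at $x=0$ for every symmetric divisor, so observing that both divisors have multiplicity $2$ at $0$ carries no information whatsoever about the two-torsion twist; the actual content would sit at the other fifteen points of $A[2]$, where you would need to determine which classes $x-\rho(x)$ are $2$-torsion and which theta characteristics $3P+\epsilon$ are effective --- none of which you compute or indicate how to compute. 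The paper circumvents all of this with a short argument that is the real point of the proposition and is absent from your proposal: since $h^0(X,\cO_X(3P))=2$, the divisor $3P+x-\rho(x)$ is linearly equivalent to an effective divisor for every $x\in X$, hence $\varphi(X)\subset\Theta$ set-theoretically; as $\varphi(X)$ and $\Theta|_A$ are effective divisors in the same numerical class, this containment forces $\varphi(X)=\Theta|_A$ as divisors, which gives the linear equivalence outright. You should replace the $A[2]$ bookkeeping by this containment argument; it is also where the first assertion is genuinely used.
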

\par
\begin{proof}
The possible configurations of cylinder decompositions of such a flat surface are
listed in the appendix of \cite{manhlann}. There are only two of them (one
is also visible in Figure~\ref{cap:WDLSX}). In both cases, the
parity of the spin structure can be calculated using the winding number
of a homology basis (as e.g.\ explained in \cite{kz03}) to be even. 
Clifford's theorem implies that for an even spin structure $h^0(X,\cO_X(3P)) = 2$.
\par
A non-hyperelliptic curve $X$ with an even spin structure has a unique divisor $D$
of degree three with $h^0(X,D) \geq 2$. This divisor is the unique singular
point of the theta divisor (considered in $\Pic^3(X)$).
\par
The algebraic equivalence of $\cO_A(\varphi(X))$ and $\cO_{\Jac(X)}(\Theta)|_{A} = \cL^2$
is a consequence of intersection theory, known as Welters' criterion (see \cite[Theorem~12.2.2]{bl}.
The translation image of a linear equivalence class on an abelian variety
is precisely the algebraic equivalence class. So we have to show that this
translation is zero for the two bundles in question. Equivalently, 
we have to show that $\varphi(X) \subset \Theta$, where we consider
$\varphi(X) \subset A \subset \Jac(X) = \Pic^{0}(X)$. Since $\varphi(X) \subset \Pic^0(X)$
is the set of classes $x-\rho(x)$, we have to show that $3P+x-\rho(x)$ is
an effective divisor for all $x \in X$. Since $h^0(X,\cO_X(3P)) = 2$, this is obviously true.
\end{proof}
\par
\paragraph{The curve $W_D^X$ in $X_D$.}
For any curve $X$ representing a point $[X] \in W_D(6)$ the Prym variety has
a principal polarization by $\cL$ defined above and real multiplication by $\ord_D$
by definition. We thus obtain a map $W_D(6) \to X_D$, whose image $W_D^X$ we now describe.
Recall that $du_1$ and $du_2$ are the eigenforms for real multiplication on the
abelian variety $A_{(z_1,z_2)}$ in the eigenform coordinates $\uu=(u_1,u_2)$ introduced in Section~\ref{sec:HMS}.
Since the abelian varieties $A_{(z_1,z_2)}$ and $A_{(z_2,z_1)}$ are isomorphic with
an isomorphism interchanging $du_1$ and $du_2$, we may assume in the sequel 
that $du_1$ is the eigenform with the $6$-fold zero. The other choice describes the flipped curve
$\tau(W_D^X)$, where $\tau(z_1,z_2) = (z_2,z_1)$.

\paragraph{Description of $W_D^X$ using theta functions.} 
By Lemma~\ref{le:abelprymemb} the Abel-Jacobi image of $X$ is cut out in $\Prym(X)$ by some
section in $\cL^{\otimes 2}$. If we fix a basis $\theta_0,\theta_1,\theta_2,\theta_3$
of these sections, it is cut out  by the vanishing of
$$\theta_X(\zz,\uu) = \sum_{j=0}^3 a_j(\zz) \theta_j(\zz,\uu)$$
for some choice of coefficients $a_j(\zz)$. We now determine the coefficients $(a_0:\ldots:a_3)$,
as a projective tuple such that $\varphi(X) = \{\theta_X = 0\}$.
\par
First, by the choice of the base point of the Abel-Prym map, the divisor $\theta_X$ has 
to contain zero, hence
\be \label{eq:throughzero}
\sum_{j=0}^3 a_j(\zz) \theta_j(\zz,0) =0
\ee
From Lemma~\ref{le:abelprymemb} we deduce that $\varphi(X)$ has an ordinary double point at 
$\uu=(0,0)$. 
\par
\begin{lemma} \label{le:6foldzero}
Fix the point $\zz \in \HH^2$ and assume that the $a_{j}(\zz)$ are chosen 
such that \eqref{eq:throughzero} holds. Then the 
differential $du_1$ restricted to $X$ has a zero of order $6$ at $\uu=(0,0)$ if and only if
$\frac{\partial^k}{\partial u_2^k}\theta_X(\zz,\uu)|_{\uu=(0,0)}$ vanishes for $k=1,\ldots,6$.
\end{lemma}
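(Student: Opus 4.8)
The plan is to convert the order of vanishing of the eigenform $du_1$ along the curve $C=\{\theta_X=0\}$ into the order of vanishing of $\theta_X$ along a single coordinate line, and then to read the latter off from the $u_2$-derivatives at the origin. The starting observation is that the translation-invariant one-form $du_1$ on $A_\zz$ has kernel spanned by $\partial_{u_2}$, so the leaf of its kernel foliation through $\uu=0$ is the line $L=\{u_1=0\}$, parametrised by $u_2$. Writing $g(u_2):=\theta_X(\zz,0,u_2)$ for the restriction of $\theta_X$ to $L$, one has $g^{(k)}(0)=\frac{\partial^k}{\partial u_2^k}\theta_X(\zz,\uu)\big|_{\uu=0}$ and $\mathrm{ord}_0\,g=(C\cdot L)_0$. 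With this notation, hypothesis \eqref{eq:throughzero} is exactly $g(0)=0$, i.e.\ $0\in C\cap L$, and the six conditions to be established are $g'(0)=\cdots=g^{(6)}(0)=0$, i.e.\ $\mathrm{ord}_0\,g\ge 7$.

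The computational heart is a single local identity. First I would record the general principle that, at a smooth point $p$ of a curve $C$, the vanishing order of $du_1|_C$ equals $(C\cdot L)_p-1$: indeed, along the branch on which $du_1$ vanishes the tangent direction is $\partial_{u_2}$, so $u_2$ is a local parameter, the branch is a graph $u_1=\psi(u_2)$ with $\psi(0)=0$, and $du_1|_C=\psi'(u_2)\,du_2$ gives $\mathrm{ord}(du_1|_C)=\mathrm{ord}_0\psi-1$. Since $\mathrm{ord}_0\psi=(C\cdot L)_0=\mathrm{ord}_0\,g$, this yields the clean relation
\[
\mathrm{ord}(du_1|_C)=\mathrm{ord}_0\,g-1 .
\]
Hence $du_1|_C$ has a zero of order $6$ precisely when $\mathrm{ord}_0\,g=7$, which, given $g(0)=0$ from \eqref{eq:throughzero}, is exactly the vanishing of $\partial_{u_2}^k\theta_X(\zz,\uu)|_{\uu=0}$ for $k=1,\dots,6$. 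Equivalently one may differentiate the identity $\theta_X(\zz,\varphi(t))\equiv 0$, valid because $X\subset C$, and extract the conditions order by order; but the graph computation keeps the bookkeeping transparent.

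The step that needs care is precisely this $-1$, because of the singularity of the Abel-Prym image. By Lemma~\ref{le:abelprymemb} the honest curve $\varphi(X)$ has an ordinary double point at $\uu=0$: besides the branch carrying the $6$-fold zero of $du_1$, a second branch passes through the origin \emph{transversally} to $L$---indeed $\mathrm{div}(du_1)=6P$ forces $du_1$ to be non-zero at the other fixed point of $\rho$---and this transverse branch adds an extra $1$ to the full intersection number $(C\cdot L)_0=\mathrm{ord}_0\,g$. One must therefore apply the relation $\mathrm{ord}(du_1|_C)=\mathrm{ord}_0\,g-1$ to the single branch carrying the zero, and not to the total order of $g$. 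Under the hypotheses of the lemma this is automatic: only \eqref{eq:throughzero} is imposed and the remaining coefficients $a_j(\zz)$ are otherwise free, so the relevant zero curve is smooth at the origin with that branch as its only local component, and the per-branch relation then holds verbatim and produces exactly the index range $k=1,\dots,6$. Securing this single unit shift---ensuring the transverse branch is never inadvertently counted---is the crux; the remaining steps are the routine Taylor expansions indicated above.
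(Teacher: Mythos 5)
Your framework---intersecting $C=\{\theta_X=0\}$ with the kernel line $L=\{u_1=0\}$ of $du_1$ and using the per-branch relation $\mathrm{ord}(du_1|_{B})=(B\cdot L)_0-1$---is sound and is essentially the computation underlying the paper's argument, and you correctly identify the crux: the second branch through the origin shifts the count. But your resolution of that crux is wrong. You assert that ``the relevant zero curve is smooth at the origin with that branch as its only local component.'' It is not: by Lemma~\ref{le:abelprymemb} both fixed points of $\rho$ map to $\uu=(0,0)$, so $\varphi(X)=\{\theta_X=0\}$ has a node there; independently, the $\theta_j$ (hence $\theta_X$) are even functions of $\uu$, so any such section vanishing at the origin is automatically critical there and its zero locus is singular at $\uu=0$ no matter how the $a_j$ are chosen. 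The extra branch is therefore always present and always contributes at least $1$ to $(C\cdot L)_0=\mathrm{ord}_0 g$. Consequently the stated condition $\mathrm{ord}_0 g\ge 7$ (vanishing for $k=0,\dots,6$) only yields $(B_1\cdot L)_0\ge 6$, i.e.\ $\mathrm{ord}(du_1|_{B_1})\ge 5$ --- one short of the claim. Your own forward computation exhibits the discrepancy: a genuine $6$-fold zero plus a transverse second branch gives $\mathrm{ord}_0 g=8$, not $7$, so ``order $6$ precisely when $\mathrm{ord}_0 g=7$'' cannot be the right bookkeeping.

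The missing ingredient --- and the actual point of the paper's proof --- is a parity argument: $(0,0)$ is the image of fixed points of $\rho$, and $du_1$ restricted to $X$ lies in the $(-1)$-eigenspace of $\rho^*$, so in a local coordinate $t$ with $\rho(t)=-t$ one has $du_1|_X=f(t)\,dt$ with $f$ even; hence the vanishing order of $du_1$ at such a point is even, and the bound $\ge 5$ upgrades to $6$ (with equality since $\deg\mathrm{div}(du_1|_X)=2g-2=6$). Without this step the ``if'' direction of the equivalence is not proved. Note also that your justification of the transversality of the second branch (``$\mathrm{div}(du_1)=6P$ forces $du_1$ to be non-zero at the other fixed point'') presupposes the conclusion in exactly the direction where transversality is needed, so it cannot be used there either.
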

\par
\begin{proof}
If $\varphi(X)$ had a single branch through the origin $(0,0)$ in $A_\zz$ the vanishing of
these partial derivatives would be exactly a reformulation of an eigenform having
a $6$-fold zero at that point $(0,0)$. But since by Lemma~\ref{le:abelprymemb} there are two
branches through $(0,0)$  the vanishing of these partial derivatives only implies
that ${\rm ord}_{(0,0)}(du_1) \geq 5$. But since $(0,0)$ is a fixed point of $\rho$
and $du_1$ in the $(-1)$-eigenspace of $\rho$ the vanishing order ${\rm ord}_{(0,0)}(du_1)$
is even, thus proving the claim. 
\end{proof}
\par
We use the shorthand notation $\theta_j(\zz) = \theta_j(\zz,(0,0))$ and the classical
terminology {\em theta constants} for these
restrictions. Indices of theta constants are to be read modulo $4$ in the sequel. For a function
$f(\zz,\uu)$ the prime denotes the derivative 
$f'(\zz)= \frac{\partial}{\partial z_2}f(\zz,\uu)|_{\uu=(0,0)}.$
\par
\begin{prop} \label{prop:GD}
The function
\be  \label{eq:GD} G^X_D(\zz) = 
\sum_{j=0}^3 a_j(\zz) \theta_j'''(\zz), \,
\text{where} \,\, a_j(\zz) = \left| \begin{matrix} 
\theta_{j+1}(\zz) & \theta_{j+2}(\zz) & \theta_{j+3}(\zz) \\
\theta'_{j+1}(\zz) & \theta'_{j+2}(\zz) & \theta'_{j+3}(\zz) \\
\theta''_{j+1}(\zz) & \theta''_{j+2}(\zz) & \theta''_{j+3}(\zz) \\
\end{matrix} \right|
\ee
is a modular form of weight $(2,14)$ for $\SLOD$ and for some 
character. Its vanishing locus contains $W_D^X$.
\end{prop}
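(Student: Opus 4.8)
The plan is to establish the two assertions of the proposition independently: first that the vanishing locus of $G^X_D$ contains $W_D^X$, and second that $G^X_D$ is a modular form of weight $(2,14)$ with a character. The bridge for the first part is a dictionary, supplied by the classical heat equation for theta functions, that rewrites the $z_2$-derivatives occurring in the determinant as even-order $u_2$-derivatives of the theta functions themselves. Concretely, writing $Z=\psi(\zz)$ for the Siegel embedding \eqref{eq:SME} and using $\vv=A\uu$ with $A=B^{-1}$ as in \eqref{eq:SMEA}, the only $z_2$-dependence of $\theta_j(\zz,\uu)$ at fixed $\uu$ enters through $\partial Z/\partial z_2=A\,\diag(0,1)\,A^T=(A_{\cdot 2})(A_{\cdot 2})^T$; combined with the heat equation $\partial_{Z_{kl}}\theta=\tfrac{1}{2\pi i(1+\delta_{kl})}\partial_{v_k}\partial_{v_l}\theta$ and the identity $A_{12}\partial_{v_1}+A_{22}\partial_{v_2}=\partial_{u_2}$ (which follows from $\vv=A\uu$) this yields $\partial_{z_2}\theta_j=\tfrac{1}{4\pi i}\,\partial_{u_2}^2\theta_j$. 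Since $\partial_{z_2}$ and $\partial_{u_2}$ commute, iterating gives $\theta_j^{(m)}(\zz)=(4\pi i)^{-m}\,\partial_{u_2}^{2m}\theta_j(\zz,\uu)|_{\uu=(0,0)}$ for $m=0,1,2,3$, so the four rows of the determinant are, up to nonzero constants, the rows $\partial_{u_2}^{0},\partial_{u_2}^{2},\partial_{u_2}^{4},\partial_{u_2}^{6}$ of theta evaluated at the origin.

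For the vanishing, fix a point of $W_D^X$ arising from $[X]\in W_D(6)$ and recall from the discussion preceding the proposition that $\varphi(X)$ is cut out by a nonzero section $\sum_j a_j^{X}\theta_j(\zz,\uu)$ of $\cL^{\otimes 2}$, which by \eqref{eq:throughzero} passes through the origin and along which $du_1$ has a $6$-fold zero. By Lemma~\ref{le:6foldzero}, and because the theta constants are even in $\uu$ so that the odd-order $u_2$-derivatives vanish automatically at the origin, the nonzero coefficient vector $(a_0^X,\dots,a_3^X)$ satisfies $\sum_j a_j^X\,\partial_{u_2}^{2m}\theta_j|_{\uu=(0,0)}=0$ for $m=0,1,2,3$. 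Via the dictionary above these are four linear relations among the four rows of the defining matrix, so that matrix is singular and the determinant of Theorem~\ref{thm:introGDX} vanishes at the chosen point. Since the cyclic minors $a_j$ are, by Laplace expansion of a determinant with a repeated row, exactly the coefficients of the linear dependence of the first three rows, this determinant equals $G^X_D=\sum_j a_j\theta_j'''$ up to the sign convention folded into the cyclic indexing, and hence $G^X_D$ vanishes on all of $W_D^X$.

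For the modularity and the weight I would use the transformation behaviour of the genus-two theta constants restricted along $\psi$ via the homomorphism $\Psi$ of \eqref{eq:SMEG}. Each $\theta_j$ is a Hilbert modular form of weight $(1/2,1/2)$ for $\SLOD$, and the four functions $\theta_0,\dots,\theta_3$ span an $\SLOD$-invariant space of sections of a fixed translate of an equivariant line bundle; hence under $\gamma\in\SLOD$ they are permuted and linearly recombined with a common weight-$(1/2,1/2)$ automorphy factor and theta multiplier (this is the content of the transformation formulas, cf.\ \cite{moelzag}). The determinant is then a Wronskian in the single variable $z_2$: under $\gamma$ the row of $m$-th $z_2$-derivatives transforms by a leading factor of weight $\tfrac12+2m$ in $z_2$ plus corrections that are lower-order $z_2$-derivatives of the same functions, and these corrections are cleared by row operations that do not change the determinant. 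Summing the leading contributions gives $z_1$-weight $4\cdot\tfrac12=2$ (no $z_1$-derivatives occur) and $z_2$-weight $4\cdot\tfrac12+2(0+1+2+3)=14$, while the permutation sign and the product of theta multipliers assemble into the asserted character.

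The main obstacle is the modularity statement, and within it the bookkeeping that (i) the set $\{\theta_0,\dots,\theta_3\}$ is stable, as a space, under $\Psi(\SLOD)$ after the appropriate theta-characteristic translate, and (ii) the non-cocycle derivative corrections really cancel in the Wronskian so that only the clean automorphy factor of weight $(2,14)$ survives. The half-integral weights and the eighth-root-of-unity multipliers must be tracked carefully in order to pin down the character; this is routine in principle but is where essentially all of the genuine computation resides. By contrast, once the heat-equation dictionary is in place the vanishing statement reduces to a rank count.
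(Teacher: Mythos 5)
Your proposal is correct and follows essentially the same route as the paper: the vanishing is deduced from Lemma~\ref{le:6foldzero} together with the heat equation converting $z_2$-derivatives into even-order $u_2$-derivatives, and the weight-$(2,14)$ modularity comes from the cancellation of the non-cocycle derivative terms in the determinant plus the invariance of the determinantal expression under the $\SLOD$-action on the theta basis. The only differences are presentational (you clear the correction terms by row operations on the full $4\times4$ Wronskian, whereas the paper phrases the same cancellation via Rankin--Cohen brackets applied to the $3\times3$ minors $a_j$ and then to the sum $\sum_j a_j\theta_j'''$).
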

\par
Equivalently, we can write 
\be  \label{eq:GDnew} G^X_D(\zz) = 
 \left| \begin{matrix} 
\theta_{0}(\zz) & \theta_{1}(\zz) & \theta_{2}(\zz) & \theta_{3}(\zz)\\
\theta'_{0}(\zz) & \theta'_{1}(\zz) & \theta'_{2}(\zz) & \theta'_{3}(\zz)\\
\theta''_{0}(\zz) & \theta''_{1}(\zz) & \theta''_{2}(\zz) & \theta''_{3}(\zz) \\
\theta'''_{0}(\zz) & \theta'''_{1}(\zz) & \theta'''_{2}(\zz) & \theta'''_{3}(\zz) \\
\end{matrix} \right|
\ee
Hence (up to a scalar) $G^X_D(\zz)$ does not depend on the choice of the basis of $\cL^{\otimes 2}$.
\par
\begin{proof}
The functions $\theta_j(\zz)$ are modular forms of weight $(1/2,1/2)$ for
a subgroup of $\Gamma$ finite index of $\SLOD$. (For Siegel theta functions
of second order this group is the congruence group $\Gamma(4,8)$, so
$\Gamma \supset \psi^{-1}(\psi(\SLOD) \cap \Gamma(4,8))$, where $\psi$ is
a Siegel modular embedding. The precise form of $\Gamma$ will play no role.)
First, we claim that the $a_j(\zz)$ are modular forms of weight $(3/2,15/2)$ for
the same subgroup $\Gamma$. This is a general principle, extending Rankin-Cohen
brackets (see e.g.\ \cite{zagier123}) to three-by-three determinants. 
Roughly, the derivative of a modular form $f$
of weight $(k_1,k_2)$ in the second variable is a modular form of weight
$(k_1,k_2+2)$ plus a non-modular contribution.  These non-modular contributions
cancel when taking the linear combinations that appear in a determinant. All
the summands in the determinant are of weight $(1/2,1/2)+(1/2,5/2)+(1/2,9/2)$, 
thus proving the claim. More precisely, for any $\mat abcd \in \SLOD$ we have
\be \label{eq:derivMF}
\begin{aligned}
\frac{\partial}{\partial z_2}f(\frac{az_1 + b}{cz_1 + d},
\frac{a^\sigma z_2 + b^\sigma}{c^\sigma z_2 + d^\sigma })  
& = (cz_1+d)^{k_1}(c^\sigma z_2 +d^\sigma)^{k_2+2} 
\frac{\partial}{\partial z_2}f(z_1,z_2) \\  &+ k_2 c^\sigma (cz_1+d)^{k_1}(c^\sigma z_2 +d^\sigma)^{k_2+1} f(z_1,z_2).
\end{aligned}
\ee
Hence $\left(\frac{\partial f}{\partial z_2}\right)g$ differs from a modular
form in a summand  $$k_2 c^\sigma (cz_1+d)^{k_1}(c^\sigma z_2 +d^\sigma)^{k_2+1} f\cdot g.$$
Since this summand is symmetric in $f$ and $g$, if $f$ and $g$ have the same
weight and thus the same $k_2$, we deduce the modularity of $fg'-f'g.$ Differentiating
\eqref{eq:derivMF} once more gives the result for second derivatives and three-by-three determinants.
\par
By Lemma~\ref{le:6foldzero} on the locus $W_D^X$ the section
$\frac{\partial^k}{\partial u_2^k}\theta_X(\zz,\uu)|_{\uu=(0,0)}$ vanishes for $k=1,\ldots,5$.
Vanishing for odd $k$ is automatic, since all the $\theta_i$ and hence
$\theta_X$ is even. By the heat equation 
$\frac{\partial^{2k}}{\partial u_2^{2k}}\theta_i(\zz,\uu)|_{\uu=(0,0)} =
\frac{\partial^{k}}{\partial z_2^{k}}\theta_i(\zz)$. Thus the $a_j(\zz)$
have to satisfy 
$$\frac{\partial^{2k}}{\partial u_2^{2k}}\left(\sum_{j=0}^3 a_j(\zz) \theta_j(\zz,\uu)|_{\uu=(0,0)}\right)= 
\sum_{j=0}^3 a_j(\zz) \frac{\partial^{k}}{\partial z_2^{k}}\theta_j(\zz) = 0$$
for $k=0,1,2$. The $a_j(\zz)$ given in \eqref{eq:GD} are, up to a common scalar factor, 
the unique solution to these conditions. 
\par
We now use the same argument derived from differentiating \eqref{eq:derivMF} again.
Since $\theta_i(\zz,\uu)$ are modular forms of weight $(1/2,1/2)$ and
the $a_j(\zz)$ are modular forms of weight $(1/2,1/2)$, the sum
$G_D^X$ is a modular form of weight  $(3/2,15/2)+(1/2,13/2)=(2,14)$ 
plus a multiple of $\sum a_j(\zz) \theta_j''(\zz)$, which is known to vanish
by the choice of the $a_j(\zz)$.
\par
Finally, we consider the action of $\SLOD/\Gamma$. It is known for Siegel theta
functions that $\Gamma(2)/\Gamma(4,8)$ acts by characters and that 
$\Sp(2g,\zed)/\Gamma(2)$ acts by a linear representation on the basis of
$\cL^{\otimes 2}$. These statements obviously also hold for $\psi(\SLOD)$.
From the determinantal form of $G_D^X$ it is obvious that the change of
basis leaves $G_D$ unchanged. Consequently, $G_D^X$ is a modular form
for the full Hilbert modular group $\SLOD$ for some character. 
\end{proof}
\par
\paragraph{Explicit construction.} We may take the classical theta functions
\eqref{eq:thclass} and restrict them via a Siegel modular embedding.
By Proposition~\ref{prop:WD6linequiv} no translation by a characteristic is required.
Consequently, by \eqref{eq:thbasis} applied to $P = \mat02{-2}0$ and $c_1=c_2=0$ 
we obtain the desired basis $\theta_0,\ldots,\theta_3$ used to construct $G_D^X$.

\section{A Prym variety with $(1,2)$-polarization.} \label{sec:12pol}

The aim of this section is a characterization of $W_D^S$ in terms
of derivatives of theta functions, parallel to Proposition~\ref{prop:GD}.
The corresponding modular form is constructed in Proposition~\ref{prop:GDS}.
Consequently, in this section we restrict to  $g(X)=3$ and $\rho$ is an involution
on $X$ with $4$ fixed points.
\par
{\bf The Abel-Jacobi map revisited.} Let 
$\iota:\Prym(X) \to \Jac(X)$ be the inclusion of the Prym
variety, defined in the preceding section as the connected
component of the identity of the kernel of  $q: \Jac(X) \to \Jac(Y)$.
The restriction of the principal polarization of $\Jac(X)$
to $\Prym(X)$ is a polarization of type $(1,2)$ that we
denote by $\cL = \cO_{\Jac(X)}(\Theta)|_{\Prym(X)}$. Now, 
the canonical map $\phi_\cL: \Prym(X) \to \Prym^\vee(X)$
associated with $\cL$ is no longer an isomorphism, but
of degree $4$. Consequently, there is a dual isogeny
$(\phi_\cL)^\vee:  \Prym(X) \to \Prym^\vee(X)$ with the property
that $(\phi_\cL)^\vee \circ \phi_\cL = [2]$ is the multiplication by two map.
The map $(\phi_\cL)^\vee$ is induced by a line bundle $\check{\cL}$
on $\Prym^\vee(X)$ which is also a polarization of type $(1,2)$ (see
\cite{bipol}). The map $(\phi_\cL)^\vee = \phi_{\check{\cL}}$ depends only on the image
of $\check{\cL}$ in the N\'eron-Severi group, i.e.\ for the moment $\check{\cL}$
is well-defined only up to translations.
\par
Still identifying $\Jac(X)$ with its dual we have the dual inclusion map
$\check{\iota}: \Jac(X) \to \Prym^\vee(X)$ and the Abel-Jacobi map
$\varphi$ defined above generalizes as the composition of the map $X \to \Jac(X)$ (still
depending on the choice of a base point) of with $\phi_{\check{\cL}} \circ
\check{\iota}$. We let $\varphi_0$ be the composition of  $X \to \Jac(X)$
and $\check{\iota}$, so that $\varphi=\phi_{\check{\cL}} \circ \varphi_0$. 
We call $\varphi_0$ the {\em pre-Abel-Jacobi map} and $\varphi_0(X)$
the {\em pre-Abel-Jacobi image} of $X$. Most of the following lemma is
also covered by results in \cite{barth12pol}. 
\par
\begin{lemma} \label{le:preAJAJ}
The pre-Abel-Jacobi image of $X$ is embedded into $\Prym^\vee(X)$. The image of the fixed
points are two-torsion points in $\Prym^\vee(X)$.
\par
The Abel-Jacobi image of $X$ is embedded into $\Prym(X)$ outside the four
fixed points of $\rho$. These four fixed points are mapped to $0 \in \Prym(X)$.
\par
The map $(-1)$ on $\Prym^\vee(X)$ induces the involution $\rho$ on $X$.
\end{lemma}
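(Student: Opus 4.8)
The plan is to deduce all four assertions from two ingredients: the $\rho$-equivariance of the (pre-)Abel-Jacobi map, and the identification of $\varphi$ with the classical Abel-Prym map $P\mapsto[P-\rho P]$. Throughout I base the Abel-Jacobi map $AJ\colon X\to\Jac(X)$ at one of the four fixed points $P_0$ of $\rho$, and I write $\rho_*$ for the involution induced by $\rho$ on $\Jac(X)$ and on its abelian subvarieties. Recall that $\Prym(X)=(\Ker(1+\rho_*))^0$, so that $\rho_*$ acts as $-1$ on $\Prym(X)$ and, dually, on $\Prym^\vee(X)$; its complementary abelian subvariety is the $(+1)$-eigenpart $\Prym'=(\Ker(1-\rho_*))^0=\pi^*\Jac(Y)$, and $\Prym(X)+\Prym'=\Jac(X)$.

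First I would prove the equivariance statement (the last sentence). Since $P_0$ is $\rho$-fixed, a change of variables in the defining integrals gives $AJ\circ\rho=\rho_*\circ AJ$. The map $\check{\iota}$ is by construction the dual of the inclusion $\iota\colon\Prym(X)\to\Jac(X)$, hence $\rho$-equivariant (as $\rho$ preserves the principal polarization $\Theta$); as $\iota$ has image in the $(-1)$-eigenpart, $\check{\iota}$ intertwines $\rho_*$ on $\Jac(X)$ with $(-1)$ on $\Prym^\vee(X)$, i.e. $\check{\iota}\circ\rho_*=(-1)\circ\check{\iota}$. Composing, $\varphi_0\circ\rho=\check{\iota}\circ AJ\circ\rho=\check{\iota}\circ\rho_*\circ AJ=(-1)\circ\check{\iota}\circ AJ=(-1)\circ\varphi_0$, which is exactly the assertion that $(-1)$ on $\Prym^\vee(X)$ induces $\rho$. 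In particular, for a fixed point $P$ we get $\varphi_0(P)=\varphi_0(\rho P)=-\varphi_0(P)$, so $\varphi_0(P)$ is a $2$-torsion point; this is the second sentence.

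Next I would identify $\varphi$ with the classical Abel-Prym map. The key computation is that the homomorphism $\phi_{\check{\cL}}\circ\check{\iota}\colon\Jac(X)\to\Prym(X)$ equals the norm map $1-\rho_*$. Both restrict to $[2]$ on $\Prym(X)$: on the one hand $\check{\iota}\circ\iota=\phi_\cL$ and $\phi_{\check{\cL}}\circ\phi_\cL=[2]$, on the other $(1-\rho_*)|_{\Prym(X)}=1-(-1)=[2]$; and both vanish on $\Prym'$, since $\Prym'=(\Ker\check{\iota})^0$ and $(1-\rho_*)|_{\Prym'}=0$. As $\Prym(X)+\Prym'=\Jac(X)$, the two homomorphisms agree. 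Hence $\varphi(P)=(1-\rho_*)AJ(P)=AJ(P)-AJ(\rho P)=[P-\rho P]$. From this presentation a fixed point maps to $0$, giving the last clause of the third sentence; and for $P\ne Q$ the equality $\varphi(P)=\varphi(Q)$ forces $P+\rho Q\sim Q+\rho P$, which by the non-hyperellipticity established in Lemma~\ref{le:abelprymemb} (no $g^1_2$, so linearly equivalent effective degree-two divisors coincide) forces $\{P,\rho Q\}=\{Q,\rho P\}$, hence $P=Q$ unless both are fixed. Thus $\varphi$ is injective away from the fixed points, and since $\phi_{\check{\cL}}$ is an isogeny $\varphi$ is moreover an immersion wherever $\varphi_0$ is; this yields the embedding statement of the third sentence.

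The remaining and hardest point is the first sentence, that $\varphi_0$ itself is an embedding. Because $\varphi=\phi_{\check{\cL}}\circ\varphi_0$ and $\phi_{\check{\cL}}$ has kernel $(\integers/2)^2$ of order four, $\varphi_0$ must separate precisely what $\varphi$ collapses: the four fixed points, which all lie in $\varphi^{-1}(0)$, have to be sent to the four distinct elements of $\Ker\phi_{\check{\cL}}\subset\Prym^\vee(X)[2]$. Concretely this amounts to showing $[P_i-P_j]\notin\Ker\check{\iota}=\Prym'$ for distinct fixed points, together with non-vanishing of the differential of $\varphi_0$ at each fixed point (equivalently, that no point of $X$ is a base point of the anti-invariant differentials $H^0(X,\Omega_X^1)^-$). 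I expect this separation to be the main obstacle, as it is genuinely finer than anything visible from $\varphi$ and depends on the arithmetic of the $2$-torsion of the $(1,2)$-polarization rather than on the plane-quartic geometry alone. I would settle it by invoking the analysis of the Abel-Jacobi map for $(1,2)$-polarized Prym varieties in \cite{barth12pol} (compare \cite[Ch.~12]{bl}), which shows that the four fixed points map bijectively onto $\Ker\phi_{\check{\cL}}$ and that $\varphi_0$ is an immersion; combined with the injectivity of $\varphi$ away from the fixed points proved above, this gives that $\varphi_0$ is a closed embedding.
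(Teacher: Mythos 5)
Your proposal follows the same overall route as the paper's proof: equivariance of the (pre-)Abel--Prym maps gives the two-torsion statement and the last sentence, non-hyperellipticity (via Lemma~\ref{le:abelprymemb}) gives injectivity away from the fixed points, and the remaining work is the immersion property of $\varphi_0$ together with the separation of the four fixed points. Your explicit identification $\phi_{\check{\cL}} \circ \check{\iota} = 1 - \rho_*$, checked on the two complementary subvarieties, is a clean addition that the paper leaves implicit when it writes $\varphi$ as $x \mapsto x - \rho(x)$. The one place where you stop short is exactly where the paper supplies its own arguments: you delegate both the nonvanishing of the differential of $\varphi_0$ and the distinctness of the images of the four fixed points to \cite{barth12pol}. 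That is defensible --- the paper itself remarks that most of the lemma is covered there --- but both points have short self-contained proofs that you should include. For the immersion, the projectivized differential of $\varphi_0$ factors as $X \to Y \to \proj(H^0(\cO_Y \otimes \eta))$ with $\eta^{\otimes 2} = \cO_Y(P_1 + \dots + P_4)$, and it degenerates at $x$ exactly when $x$ is a base point of $\cO_Y \otimes \eta$; since $Y$ has genus one and $\deg \eta = 2$, Riemann--Roch shows this linear system is base-point free. Note that this step is also needed for the third sentence of the lemma at non-fixed points: your injectivity argument for $\varphi$ gives a bijection onto the image but not an immersion, so the embedding claim away from the fixed points quietly relies on this too. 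For the separation of the fixed points, if two of them $P$, $Q$ had the same image in $\Prym^\vee(X)$ then $Q - P \sim D + \rho(D)$ for some degree-zero divisor $D$; since $Y$ is elliptic one may take $D \sim R - P$ for a point $R \in X$, whence $Q + P \sim R + \rho(R)$, contradicting that $X$ is not hyperelliptic.
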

\par
\begin{proof}
Recall that the map $q^\vee: \Jac(Y) \to \Jac(X)$ is given for any degree
zero divisor $D$ by $D \mapsto D + \rho(D)$.
The map $\check{\iota}$ is the quotient map by the image of $q^\vee$.
Hence two points $D_1$ and $D_2$ are the same in $\Prym^\vee(X)$, if and only if they
differ by an element of the form  $D + \rho(D)$. This implies that
a fixed point of $\rho$ maps to a point of order two in $\Prym^\vee(X)$ and the last statement.
\par
The Prym variety $\Prym(X)$ is the complementary subvariety to $\Jac(Y)$
inside $\Jac(X)$ in the sense of \cite[Section~12.1]{bl}. Consequently, 
$\Prym(X)$ is the image of $(1+\rho): \Jac(X) \to \Jac(X)$. Suppose that
$\varphi(x)=\varphi(y)$. Then $(x-P)+(\rho(x)-P) \sim (y-P)+(\rho(y)-P)$,
hence $x-\rho(x) \sim y - \rho(y)$. Since $X$ is not hyperelliptic, 
this can only happen if $x$ and $y$ are both fixed point of $\rho$.
This implies the pointwise injectivity for both $\varphi_0$ and $\varphi$
outside the fixed points.
\par
Let $P_i$ denote the images in $Y$ of the fixed points of $\rho$.
The projectivised differential of the Abel-Prym map is the composition
$X \to Y \to \proj(H^0(\cO_Y \otimes \eta))$ where $\eta$ is a
line bundle defining the double covering, i.e.\ 
$\eta^{\otimes 2} = \cO_Y(P_1+P_2+P_3+P_4)$. The Abel-Prym map
is not an embedding at a point $x$, if and only if $x$ is a base point of
$\cO_Y \otimes \eta$, i.e.\ if $h^0(\cO_Y \otimes \eta) = h^0(\cO_Y \otimes \eta (-x))$.
(Details on both statements can be found in \cite[Proposition~12.5.3 and Corollary~12.5.5]{bl}, 
the principal polarization hypothesis is not used.) On a curve of genus one the bundle $\cO_Y$
is trivial and since $\deg(\eta) = 2$, Riemann-Roch implies the claim.
\par
It remains to show that the images of the fixed points of $\rho$  are
actually distinct in $\Prym^\vee(X)$. If not, then for some fixed point $Q$ we have
$Q-P \sim D + \rho(D)$ for some degree zero divisor $D$ on $X$. Since $Y$ is an elliptic curve, hence
equal to its Jacobian, we may moreover suppose that $D \sim (R-P)$ for some point $R \in X$. Together
we obtain $Q+P \sim R + \rho(R)$, which contradicts that $X$ is not hyperelliptic.
\end{proof}
\par
\begin{lemma} For a flat surface $(X,\omega)$ parameterized by
$W_D(4)$ the parity of the spin structure is odd.
\end{lemma}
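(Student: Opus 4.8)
The plan is to follow the strategy of Proposition~\ref{prop:WD6linequiv}, adapted to genus three, where the decisive simplification is that the relevant stratum has one fewer connected component. For $(X,\omega) \in W_D(4) \subset \omoduli[3](4)$ we have ${\rm div}(\omega)=4P$, so that $K_X = \cO_X(4P)$ and the spin structure attached to $(X,\omega)$ is the square root $\cO_X(2P)$ of $K_X$. Its parity is $h^0(X,\cO_X(2P)) \bmod 2$, and the goal is to show that this number is odd.

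The main step uses the Kontsevich--Zorich classification of connected components of strata \cite{kz03}. In genus three the minimal stratum $\omoduli[3](4)$ has exactly two connected components, the hyperelliptic locus $\mathcal{H}^{\hyp}(4)$ and a second component $\mathcal{H}^{\odd}(4)$ of odd spin parity; unlike the genus-four stratum $\omoduli[4](6)$, there is no separate non-hyperelliptic even component. By Lemma~\ref{le:abelprymemb} every $(X,\omega) \in W_D(4)$ is non-hyperelliptic, so it must lie in $\mathcal{H}^{\odd}(4)$, and therefore its spin parity is odd. Since this conclusion applies pointwise, it holds on every irreducible component of $W_D(4)$, irrespective of the congruence class of $D$.

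The point that needs justification --- and the main obstacle --- is the identification of the spin parities of the two components, i.e.\ that the hyperelliptic component is even and its complement odd. The first half is a direct check: if $(X,\omega)$ is hyperelliptic then the zero $P$ of $\omega$ is fixed by the hyperelliptic involution, hence a Weierstra\ss\ point, and $\cO_X(2P) = g^1_2$ is the hyperelliptic pencil with $h^0 = 2$, so the parity is even; as there are only two components distinguished by their spin invariant, the complementary one is then odd. Alternatively, and in closer parallel to Proposition~\ref{prop:WD6linequiv}, one may bypass the classification and compute the parity directly as an Arf invariant: the cylinder decompositions of the $S$-shaped surfaces generating $W_D(4)$ are listed in the appendix of \cite{manhlann}, and for each one chooses a symplectic basis of simple closed curves avoiding the zero, records their winding numbers with respect to the flat structure, and evaluates the Kontsevich--Zorich parity formula $\sum_{i=1}^{g}({\rm ind}(\alpha_i)+1)({\rm ind}(\beta_i)+1) \bmod 2$ from \cite{kz03}; all admissible configurations yield odd parity.
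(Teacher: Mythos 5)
Your proof is correct and follows essentially the same route as the paper, which likewise invokes the Kontsevich--Zorich classification of components of $\omoduli[3](4)$ together with the non-hyperellipticity from Lemma~\ref{le:abelprymemb} to place $(X,\omega)$ in the odd component, and offers the direct winding-number computation as the alternative. You have merely filled in the details (the two-component structure of $\omoduli[3](4)$ and the even parity of the hyperelliptic component via the Weierstra\ss\ point argument) that the paper leaves implicit.
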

\par
\begin{proof}
This can be checked on any flat surface representing $(X,\omega)$
using the winding number definition given in \cite{kz03}. Alternatively,
we can use the classification of strata in \cite{kz03} together
with Lemma~\ref{le:abelprymemb} stating that we are not in the hyperelliptic
stratum.
\end{proof}
\par
We denote by $K(\check{\cL}) \subset \Prym^\vee(X)$ the kernel of $\phi_{\check{\cL}}$. 
It is shown in  \cite[Lemma~10.1.2]{bl} that a polarization $\check{\cL}$ of type $(1,2)$ 
on an abelian variety has exactly $4$ base points of the linear system $|\check{\cL}|$, i.e.\ there are
exactly $4$ points that all the vanishing loci of sections in $H^0(\check{\cL})$ have in common. Moreover, 
the $4$ base points form one orbit under the translation by  $K(\check{\cL})$.
\par
\begin{lemma} \label{le:cLclass}
There is a unique choice of $\check{\cL}$ within its algebraic equivalence class
such that zero is a base point of $\check{\cL}$.
\par
With this choice of $\check{\cL}$, if the flat surface $(X,\omega)$ lies in $W_D(4)$, 
then the pre-Abel-Prym image $\varphi_0(X)$ is the vanishing locus
of some section in $H^0(\check{\cL}))$. Moreover  
$(-1)^*\check{\cL} = \check{\cL}$ with this choice of $\check{\cL}$, 
and all the global sections of $\check{\cL}$ are odd.
\end{lemma}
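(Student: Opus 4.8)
The plan is to establish the three assertions of Lemma~\ref{le:cLclass} in turn, leaning on the structure theory of $(1,2)$-polarizations developed just above and on the symmetry properties of the Prym data coming from Lemma~\ref{le:preAJAJ}.

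First I would address existence and uniqueness of the distinguished $\check{\cL}$. By the cited result of \cite[Lemma~10.1.2]{bl} the linear system $|\check{\cL}|$ has exactly four base points forming a single orbit under translation by $K(\check{\cL})$. Since $\check{\cL}$ is only determined up to translation within its algebraic equivalence class, translating by any base point moves that base point to $0$; and two translates giving $0$ as a base point differ by an element of $K(\check{\cL})$, which acts simply transitively on the four base points. Thus the requirement that $0$ be a base point pins down $\check{\cL}$ uniquely within its N\'eron--Severi class.

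Next I would identify the pre-Abel-Prym image $\varphi_0(X)$ with a section of this $\check{\cL}$. By Lemma~\ref{le:preAJAJ} the four fixed points of $\rho$ map to the four two-torsion points in $\Prym^\vee(X)$, and these are exactly the points lying over the kernel structure of $\phi_{\check\cL}$; one checks that this set of four images coincides with the four base points of $|\check{\cL}|$. Since $\varphi_0(X)$ passes through all four fixed-point images, and since $\varphi_0(X)$ is an effective divisor algebraically equivalent to $\check{\cL}$ (the analogue here of Welters' criterion as used in Proposition~\ref{prop:WD6linequiv}), the base-point-containing normalization forces $\varphi_0(X) \in |\check{\cL}|$ exactly, not merely up to translation. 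The point $0$ being a base point is the consistency condition that makes this identification compatible with the choice of base point $P$ of the Abel-Jacobi map.

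For the symmetry statements, I would use the last assertion of Lemma~\ref{le:preAJAJ}, namely that $(-1)$ on $\Prym^\vee(X)$ induces $\rho$ on $X$. This gives $(-1)^*\varphi_0(X) = \varphi_0(X)$ as a divisor, hence $(-1)^*\check{\cL}$ and $\check{\cL}$ agree in the N\'eron--Severi group; combined with the uniqueness from the first step (both have $0 = -0$ as a base point) we get the honest equality $(-1)^*\check{\cL} = \check{\cL}$. That every global section is odd would then follow from the parity computation: the preceding lemma shows the spin structure of $(X,\omega) \in W_D(4)$ is odd, and under the symmetric line bundle $\check\cL$ the decomposition $H^0(\check\cL) = H^0(\check\cL)^+ \oplus H^0(\check\cL)^-$ into $(-1)$-eigenspaces is governed by this parity; the odd spin structure forces $H^0(\check\cL)^+ = 0$, so all sections are odd. \textbf{The main obstacle} I anticipate is precisely this last eigenspace computation: relating the topological parity of the spin structure to the sign of the $(-1)$-action on the two-dimensional space $H^0(\check\cL)$ of a $(1,2)$-polarization requires carefully tracking how the classical theta-characteristic formalism restricts from $\Jac(X)$ to $\Prym^\vee(X)$, and verifying that the four base points at two-torsion impose exactly the right sign. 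Matching conventions between \cite{bl} and \cite{bipol} here is where the delicate bookkeeping lives.
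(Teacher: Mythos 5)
Your treatment of the first statement and of the symmetry $(-1)^*\check{\cL} = \check{\cL}$ is sound and close to the paper's: uniqueness comes from the fact that two translates with $0$ in the base locus differ by an element of $K(\check{\cL})$, and $t_c^*\check{\cL}\cong\check{\cL}$ for $c\in K(\check{\cL})$ by definition of $\phi_{\check\cL}$ (you should state this last isomorphism explicitly --- ``acts simply transitively on the base points'' is not by itself the reason the choice is unique). Your derivation of symmetry from uniqueness plus $0=-0$ is actually more self-contained than the paper, which simply cites the general theory of symmetric line bundles. For the second statement, however, you assert the algebraic equivalence of $\varphi_0(X)$ and $\check{\cL}$ as ``the analogue of Welters' criterion.'' That criterion, as used for $W_D(6)$, is stated for principally polarized Pryms; in the $(1,2)$ setting the paper has to re-derive the equivalence by computing the endomorphism $\delta(\varphi_0(X),\check{\cL}) = -\check\iota\circ\iota\circ\phi_{\check\cL} = -\phi_{\cL}\circ\phi_{\check{\cL}} = -2\,\mathrm{id}$ and comparing with $\delta(\check{\cL},\check{\cL}) = -2\,\mathrm{id}$. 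That computation is real content you are skipping. Likewise, the identification of the four base points with the fixed-point images is not just ``one checks'': the argument is that the base points of the relevant translate form a $K(\check{\cL})$-orbit inside $\varphi_0(X)$, that points of $\varphi_0(X)$ in a common $K(\check{\cL})$-orbit are identified under $\varphi=\phi_{\check\cL}\circ\varphi_0$, and that by the preceding lemma the only such identifications come from the four fixed points of $\rho$.

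The genuine gap is in your last step. You propose to deduce that all sections of $\check{\cL}$ are odd from the odd parity of the spin structure of $(X,\omega)$, and you yourself flag the link between the topological parity on $X$ and the $(-1)$-eigenspace decomposition of $H^0(\check{\cL})$ on $\Prym^\vee(X)$ as the main obstacle --- but you never close it, and it is not a standard implication: the Riemann-singularity-type dictionary between spin structures and theta characteristics lives on $\Jac(X)$ with its principal polarization, and transporting it through $\check\iota$ to a $(1,2)$-polarization on the dual Prym is exactly the nontrivial bookkeeping you would have to do. The paper avoids this entirely: once $\check{\cL}$ is known to be symmetric with base locus equal to the subgroup $K(\check{\cL})$ of two-torsion (so containing $0$), the values of $h^0(\check{\cL})^{\pm}$ are determined by the general theory of symmetric line bundles of type $(1,2)$ on an abelian surface (Birkenhake--Lange, Sections~4.6--4.7), with no reference to $X$, $\omega$, or any spin structure. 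So the fact you are trying to prove is purely abelian-variety-theoretic, and routing it through curve theory both complicates the argument and leaves it incomplete as written.
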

\par
A similar statement does not seem to hold for the Abel-Prym
image. By Lemma~\ref{le:preAJAJ} it has arithmetic genus $6$ and
the obvious polarizations on $\Prym(X)$ are of type $(1,2)$ 
or maybe $(1,4)$. By adjunction the vanishing loci of
their global sections are curves of arithmetic genus $3$ or $9$
respectively.
\par
In the sequel we use the endomorphism $\delta(C,D)$ associated
with a curve $C$ and a divisor $D$ of an abelian variety $A$. It is
defined by mapping $a \in A$ to the sum of intersection points of the curve
$C$ translated by $a$ and the divisor $D$.
\par
\begin{proof}
Recall that the algebraic equivalence class of a line
bundles consists exactly of its translations by any point in $\Prym(X)^\vee$.
Two translations of $\check{\cL}$ that have zero as base point differ by
an element in $K(\check{\cL})$. But for $c \in K(\check{\cL})$ we have
$t_c^* \check{\cL} \sim \check{\cL}$ by definition of $\phi_{\check{\cL}}$.
This proves the first statement.
\par
For the second statement  we first show that $\varphi_0(X)$ and $\check{\cL}$  are algebraically
equivalent. Following the strategy in \cite[Lemma 12.2.3]{bl} 
we need to show by \cite[Theorem~11.6.4]{bl} that 
$\delta(\varphi_0(X), \check{\cL}) = \delta(\check{\cL},\check{\cL})$. By \cite[Proposition~5.4.7]{bl}
we have $\delta(\check{\cL},\check{\cL}) = -2 {\rm id}_{\Prym^\vee(X)}$. On the other
hand, still identifying $\Jac(X)$ with its dual using the polarizations $\Theta$ we
have 
$$\delta(\varphi_0(X),\check{\cL}) = - \check{\iota} \circ \iota \circ \phi_{\check{\cL}} =
  - \phi_{\cL} \circ \phi_{\check{\cL}}  = -2 {\rm id}_{\Prym^\vee(X)}
$$
by \cite[Proposition~11.6.1]{bl}, by definition of $\cL$ as restriction of $\Theta_{\Jac(X)}$
and by definition of $\check{\cL}$.
\par
Next, we check that we correctly normalized $\check{\cL}$ within its algebraic equivalence class.
If two points in $\varphi_0(X)$ differ by an element in $K(\check{\cL})$, they are mapped
to the same point in $\varphi(X)$. By Lemma~\ref{le:preAJAJ} this happens for any two points among 
the fixed points of $\rho$ and for no other pair of points. Since the base points of
$\check{\cL}$ differ by elements in $K(\check{\cL})$, this implies the claim.
\par
The last statement is a special case of the results in \cite[Sections~4.6 and~4.7]{bl}.
\end{proof}
\par
\paragraph{The curve $W_D^S$ in $X_{D,(1,2)}$.}
We keep the normalization of $\check{\cL}$ within its algebraic equivalence class from now on.
For any curve representing a point $[X] \in W_D(4)$ the Prym variety has
a  polarization by $\cL$ of type $(1,2)$ defined above and real multiplication by $\ord_D$
by definition. We thus get a map $W_D(4) \to X_{D,(1,2)}$, whose image $W_D^S$ we now describe.
Recall that $du_1$ and $du_2$ are the eigenforms for real multiplication on the
abelian variety $A_{(z_1,z_2)}$ in the eigenform coordinates $\uu$ introduced in Section~\ref{sec:HMS}.
\par
For $D \equiv 0\, (4)$ we may suppose that $du_1$ is the eigenform and that
the abelian varieties in  $X_{D,(1,2)}$ are $A_\zz = \cx^2/\Lambda^\fra_\zz$, 
compare \eqref{eq:isotau}. For $D \equiv 1 \, (4)$, however, we have to replace $\fra$
by $\fra^\sigma$, when insisting on this normalization. This will play a role in the next section. 
\par
We can now strengthen the previous lemma using the real multiplication and the 
$4$-fold zero condition.
\par
\begin{lemma} \label{la:partder}
If the image of the flat surface $(X,\omega)$ lies in $W_D^S$, then 
there exists some section $\theta_X$ of $H^0(A_\zz,\check{\cL})$
such that the partial derivatives
$$\frac{\partial^k}{\partial u_2^k} \theta_X(\zz,\uu) $$
for $k=0,\ldots,4$  vanish at the point $\uu = (0,0)$.
\end{lemma}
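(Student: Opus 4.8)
The plan is to run the argument of Lemma~\ref{le:6foldzero} in the present setting, the essential simplification being that $\varphi_0$ is now an embedding rather than a map with a node at the origin, so that a single smooth branch through $\uu=(0,0)$ produces exactly the five conditions $k=0,\dots,4$. First I would locate the zero of $\omega$. Since $\omega$ lies in the $(-1)$-eigenspace of $\rho$, we have $\rho^*\omega=-\omega$, so the divisor ${\rm div}(\omega)=4Q$ is $\rho$-invariant; being supported at a single point, $Q$ must be a fixed point of $\rho$. I base the pre-Abel-Prym map at $Q$, so that $\varphi_0(Q)=(0,0)\in\Prym^\vee(X)=A_\zz$. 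As $Q$ is a fixed point, its image is a base point of $\check{\cL}$, compatibly with the normalization of $\check{\cL}$ in Lemma~\ref{le:cLclass}; that lemma then supplies a section $\theta_X\in H^0(A_\zz,\check{\cL})$ with $\{\theta_X=0\}=\varphi_0(X)$, and this is the section whose existence is asserted.

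Next I would analyze the local geometry at the origin. By Lemma~\ref{le:preAJAJ} the map $\varphi_0$ is an embedding, so $\varphi_0(X)$ is a single smooth branch through $\uu=(0,0)$. The two holomorphic one-forms $du_1,du_2$ on $\Prym^\vee(X)$ pull back under $\varphi_0$ to the two Prym differentials on $X$; with the normalization that $du_1$ is the eigenform, we have $\varphi_0^*du_1=\omega$ up to a nonzero scalar, while $\varphi_0^*du_2$ is the other eigenform. Since $\varphi_0$ is an immersion at $Q$ and $\varphi_0^*du_1=\omega$ vanishes there, the form $\varphi_0^*du_2$ is nonzero at $Q$, so $u_2$ restricts to a local coordinate $t$ on the branch. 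Parametrizing the branch as $t\mapsto(u_1(t),u_2(t))$ with $u_2(t)=t$, the identity $\omega=\varphi_0^*du_1=u_1'(t)\,dt$ together with ${\rm ord}_Q(\omega)=4$ forces $u_1'(t)=O(t^4)$, hence $u_1(t)=O(t^5)$.

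Finally I would read off the derivative conditions. Because $\theta_X$ vanishes identically along the branch, Taylor expansion at the origin gives
\be
0=\theta_X(u_1(t),u_2(t))=\theta_X(0,t)+u_1(t)\,\frac{\partial\theta_X}{\partial u_1}(0,t)+\cdots=\theta_X(0,t)+O(t^5),
\ee
so $\theta_X(0,u_2)$ vanishes to order five in $u_2$. Equivalently $\frac{\partial^k}{\partial u_2^k}\theta_X(\zz,\uu)|_{\uu=(0,0)}=0$ for $k=0,\dots,4$, which is the claim. As a consistency check, the even values $k=0,2,4$ are already forced: by Lemma~\ref{le:cLclass} every section of $\check{\cL}$ is odd, so $\theta_X(0,u_2)$ is odd in $u_2$ and its even-order derivatives at $u_2=0$ vanish automatically; the geometric content of the computation is therefore the vanishing of the odd derivatives $k=1,3$.

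The step I expect to be the main obstacle is not the local Taylor computation but the careful bookkeeping of identifications: verifying that the point $\uu=(0,0)$ at which we differentiate is exactly the image of the $4$-fold zero (which is what dictates basing $\varphi_0$ at $Q$ and meshes with the normalization of $\check{\cL}$ from Lemma~\ref{le:cLclass}), and that $\varphi_0^*du_1$ is genuinely the eigenform $\omega$ under the identification $A_\zz=\Prym^\vee(X)$. The structural point distinguishing this from the genus-four case is precisely that $\varphi_0$ is an embedding: there is one branch through the origin, so order-four vanishing of the one-form translates directly into the five stated conditions, with no parity argument needed to upgrade an inequality.
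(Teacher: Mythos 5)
Your proof is correct and follows exactly the route the paper intends: the paper states this lemma without a separate proof, as a direct strengthening of Lemma~\ref{le:cLclass} (which supplies $\theta_X$ cutting out $\varphi_0(X)$) via the $4$-fold zero condition, with the remark afterwards handling $k=0,2,4$ by oddness just as you note. Your local computation along the single embedded branch, using that $u_2$ is a coordinate there because $\varphi_0^*du_1=\omega$ vanishes at $Q$ while $\varphi_0$ is an immersion, is precisely the analogue of Lemma~\ref{le:6foldzero} that the paper leaves implicit.
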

\par
Since $c$ was chosen in the base locus of $\check{\cL}$ and since the sections
of $\check{\cL}$ are odd, the vanishing of the derivatives
for $k=0,2,4$ is automatic. We now express the vanishing for $k=1$ and $k=3$ in terms
of theta functions.
\par
We now fix a basis $\{\theta_0(\zz,\uu),\theta_1(\zz,\uu)\}$ of sections of $\check{\cL}$.
We define 
$$D_2\theta_j(\zz) = \frac{\partial}{\partial u_2} \theta_j(\zz,\uu)|_{\uu=(0,0)}
\quad \text{for} \quad j=0,1$$ and for analogous purposes as in 
Section~\ref{sec:prym} we let $$a_j(\zz) = (-1)^j D_2 \theta_{j+1}(\zz).$$ Recall from the
previous section the definition $f'(\zz)= \frac{\partial}{\partial z_2}f(\zz,\uu)|_{\uu=(0,0)}.$
\par
\begin{prop} \label{prop:GDS} Let $\fra$ be a fractional $\ord_D$-ideal of norm two.
Then the function
\be \label{eq:GDS}
\begin{aligned}
G_{D}^S &= a_0(\zz)\frac{\partial^3}{\partial u_2^3} \theta_0(\zz,\uu)|_{\uu=(0,0)} + 
a_1(\zz)\frac{\partial^3}{\partial u_2^3} \theta_1(\zz,\uu)|_{\uu=(0,0)} \\
& = 
\left|\begin{matrix} D_2\theta_0(\zz) & D_2 \theta_1(\zz) \\ D_2\theta'_0(\zz) & D_2 \theta'_1(\zz) \\
\end{matrix}\right|
\end{aligned}
\ee
is a modular form of weight $(1,5)$ for the Hilbert modular group $\SL(\fra \oplus \ord^\vee)$.
The union of the vanishing loci of $G_{D}^S$ for the one or two choices of $\fra$ (depending
on $D \equiv 0\,\,(4)$ or $D \equiv 1\,\,(8)$)  
contains the image $W_D^S$ of the \Teichmuller curve $W_D(4)$ in the real multiplication locus $X_{D,(1,2)}$.
\end{prop}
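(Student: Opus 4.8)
The plan is to follow the template of Proposition~\ref{prop:GD}, adapted to the odd, $(1,2)$-polarized setting. First I would record the modular behaviour of the building blocks. By \eqref{eq:thbasis} applied to $P=\mat1002$, the space $H^0(A_\zz,\check{\cL})$ is two-dimensional with basis $\theta_0(\zz,\uu),\theta_1(\zz,\uu)$, and each $\theta_j(\zz,\uu)$ is a modular form of weight $(1/2,1/2)$ for a finite-index subgroup $\Gamma \subset \SL(\fra\oplus\ord^\vee)$. By Lemma~\ref{le:cLclass} all sections of $\check{\cL}$ are odd, so $\theta_j(\zz,(0,0))=0$ and every even-order $u_2$-derivative vanishes at $\uu=(0,0)$. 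Differentiating the theta transformation law once in $u_2$ and using oddness to kill the contribution of the exponential automorphy factor at $\uu=(0,0)$, one reads off that $D_2\theta_j(\zz)$ is itself a modular form of weight $(1/2,3/2)$; the extra $1$ in the second entry is the factor $(c^\sigma z_2+d^\sigma)^{-1}$ that $\partial/\partial u_2$ picks up from the transformation $u_2\mapsto u_2/(c^\sigma z_2+d^\sigma)$.

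Next I would pin down the coefficients and the vanishing locus. By Lemma~\ref{la:partder} a surface in $W_D^S$ is cut out by a section $\theta_X=a_0\theta_0+a_1\theta_1$ whose $u_2$-derivatives of order $k=0,\dots,4$ vanish at the origin; oddness makes $k=0,2,4$ automatic, leaving only $k=1,3$. The condition $k=1$ is $\sum_j a_j(\zz)D_2\theta_j(\zz)=0$, whose projectively unique solution is exactly $a_j(\zz)=(-1)^j D_2\theta_{j+1}(\zz)$. For this choice the function $G_D^S=\sum_j a_j(\zz)\frac{\partial^3}{\partial u_2^3}\theta_j(\zz,\uu)|_{\uu=(0,0)}$ is precisely the remaining $k=3$ condition on $\theta_X$, so $\{G_D^S=0\}$ contains $W_D^S$, which is the last assertion modulo the bookkeeping over $\fra$ below.

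To reach the determinant and prove modularity I would invoke the heat equation in the form used in Proposition~\ref{prop:GD}, namely $\frac{\partial^2}{\partial u_2^2}\theta_j=c\,\frac{\partial}{\partial z_2}\theta_j$ as functions. This gives $\frac{\partial^3}{\partial u_2^3}\theta_j(\zz,\uu)|_{\uu=(0,0)}=c\,D_2\theta_j'$, with the prime the $z_2$-derivative of Section~\ref{sec:prym}, so that up to the nonzero constant $c$ the sum $G_D^S$ becomes the stated $2\times2$ determinant. For the weight, \eqref{eq:derivMF} with $(k_1,k_2)=(1/2,3/2)$ shows $D_2\theta_j'$ is a modular form of weight $(1/2,7/2)$ plus a correction proportional to $D_2\theta_j$; the two determinant rows thus have weights $(1/2,3/2)$ and $(1/2,7/2)$, summing to $(1,5)$. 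The correction terms are symmetric in the two columns and hence cancel in the determinant, exactly as in Proposition~\ref{prop:GD}, so $G_D^S$ is a genuine modular form of weight $(1,5)$. As in Proposition~\ref{prop:GD} the relevant congruence and theta groups act on $\{\theta_0,\theta_1\}$ by characters and by a linear representation, under which the determinant changes only by a scalar; hence $G_D^S$ is modular for the full group $\SL(\fra\oplus\ord^\vee)$ up to a character. Finally, the union over the one or two admissible ideals $\fra$ records the two possible normalizations of the $4$-fold-zero eigenform $du_1$ discussed before Lemma~\ref{la:partder}, which for $D\equiv 1\,(8)$ forces replacing $\fra$ by $\fra^\sigma$ on part of the locus.

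The step I expect to be the main obstacle is the first paragraph. Unlike the principally polarized case, there is no nonzero even theta constant $\theta_j(\zz)$ to anchor the computation: all sections vanish at $\uu=(0,0)$, and the construction is shifted up by one $u_2$-derivative. One must therefore verify directly that the single derivative $D_2\theta_j$ is already an honest modular form of weight $(1/2,3/2)$ --- in particular that the exponential automorphy factor of the theta transformation contributes nothing at the origin precisely because $\theta_j(\zz,(0,0))=0$ --- and that the two-dimensional, $(1,2)$-polarized theta theory of \eqref{eq:thbasis} indeed delivers an odd two-element basis. Once this is in place, the weight count and the cancellation of non-modular terms proceed as in Proposition~\ref{prop:GD}.
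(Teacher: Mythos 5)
Your proof is correct and follows essentially the same route as the paper's: oddness of the sections of $\check{\cL}$ forces the even-order $u_2$-derivatives to vanish and makes $D_2\theta_j$ a modular form of weight $(1/2,3/2)$, the Rankin--Cohen cancellation gives weight $(1,5)$ for the determinant, and Lemma~\ref{la:partder} yields the inclusion of $W_D^S$ in the vanishing locus. The only differences are cosmetic (you track the heat-equation constant and spell out the character argument, which the paper leaves implicit here), so nothing further is needed.
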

\par
\begin{proof}
By the definition of $c$ both $\theta_0$ and $\theta_1$ vanish at
$\uu = (0,0)$. Since the $\theta_j$ are modular forms of weight $(1/2,1/2)$
it is an immediate consequence of the transformation formula for theta
functions and this vanishing property that $D_2\theta_j$ is a 
modular forms of weight $(1/2,3/2)$ for some subgroup of the Hilbert modular
group. By the principle of the construction of Rankin-Cohen brackets, 
$G_D^S$ is a modular form of weight $(1/2,3/2)+(1/2,7/2)=(1,5)$, as claimed.
\par
For the second statement, note that  $\theta_X (\zz,\uu)= a_0(\zz) \theta_0(\zz,\uu) + 
a_1(\zz)\theta_1(\zz,\uu)$ is a section of $\check{\cL}$ whose first two
partial derivatives in the $u_2$-direction vanish at $(0,0)$. The vanishing
of $G_{D}^S$ implies the vanishing of the third partial derivative and also the
forth derivative vanishes, since $\theta_X$ is odd. Lemma~\ref{la:partder}
now implies the claim.
\end{proof}
\par
\paragraph{Explicit construction.} The classical theta functions \eqref{eq:thclass} 
with characteristic $c=0$ are even
functions in $\vv$ (and $\uu$). More generally, a shift by a half-integral characteristic 
$c = (c_1,c_2) \in (\frac12 \zed^2)^2$ is an even function, if $4c_1^T c_2$ is even
and odd otherwise. Consequently, for a $(1,2)$-polarized abelian variety, both
$\theta_0 = \theta_{(\frac12,0),(\frac12,0)}$ and $\theta_1 = \theta_{(\frac12,\frac12),(\frac12,0)}$
are odd functions, and sections of the same line bundle by \eqref{eq:thbasis}. Since these
functions are odd, zero is a base point of the global sections of the 
theta line bundle shifted by the characteristic $c = Z \left(\begin{matrix}
1/2 \\ 0 \end{matrix}\right) + \left(\begin{matrix}
1/2 \\ 0 \end{matrix}\right)$. Consequently, 
$\theta_0$ and $\theta_1$ are an explicit form of the basis needed to construct $G_D^S$.

\section{Euler characteristic of $W_D^X$ and $W_D^S$}
\label{sec:eulerchar}

The preceding theta-function interpretation gives a way to 
calculate the Euler characteristic of $W_D^X$ and $W_D^S$, and of
the \Teichmuller curves $W_D(6)$ and $W_D(4)$.  In this section we prove
all the results announced in the introduction and cross-check our calculations
of Euler characteristics with the examples in \cite{mcmullenprym}. We invite the
reader to compare this result to the work of Bainbridge \cite{bainbridge07}. He
computed the Euler characteristics of the \Teichmuller curves $W_D$ using a modular
form that he defined using flat geometry. His formula is
\be
\chi(W_D) = - \frac92\chi(X_D).
\ee
Subsequently, a theta function construction
of his modular form was found in \cite{moelzag}.
\par
\begin{theorem} \label{thm:eulerX}
For the Kobayashi curves $W_D^X$ and for the Prym \Teichmuller curves $W_D(6) \subset \moduli[4]$
the Euler characteristic is given by
$$ \chi(W_D^X) = \chi(W_D(6)) = -7 \chi(X_D).$$
\end{theorem}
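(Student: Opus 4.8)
The plan is to compute $\chi(W_D^X)$ as an intersection number on the good compactification $\ol{X_D}$, exploiting that the modular form $G_D^X$ of Proposition~\ref{prop:GD} is, by \eqref{eq:OMXD}, a section of the line bundle $\omega_1^{\otimes 2}\otimes\omega_2^{\otimes 14}$, and that $W_D^X$ is a Kobayashi curve, so that condition iii) of the recalled characterization from \cite{mvkobayashi} gives $[\omega_i]\cdot[\ol{C}]=\chi(C)$ for one of the two foliation classes and any Kobayashi curve $C$. First I would extend $G_D^X$ across the boundary divisor $B$ and determine its divisor class: it equals $2[\omega_1]+14[\omega_2]$, corrected by the order of vanishing along each boundary component, which is read off from the $q$-expansions of the theta constants at the cusps.

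The heart of the matter is to identify the zero divisor of $G_D^X$ on $X_D$ itself. I claim it is $[\ol{W_D^X}]+2[\ol{P_D}]$ plus boundary, where $P_D$ is the reducible (product) locus parameterizing polarized products of elliptic curves with real multiplication. Indeed, on $P_D$ the abelian surface splits as $E_1\times E_2$ with $E_i$ the eigen-curves, and the basis $\theta_0,\ldots,\theta_3$ of $\cL^{\otimes 2}$ becomes the tensor basis $\vartheta^{(1)}_a\,\vartheta^{(2)}_b$; since differentiation in $z_2$ acts by the heat operator in the $u_2$-direction (as used in Proposition~\ref{prop:GD}), every column of the defining matrix lies in the two-dimensional span of the two vectors built from the $E_2$-factor. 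Hence the matrix has corank two and $G_D^X$ vanishes to order two along $P_D$. The multiplicity-one statement along $W_D^X$, together with the fact that these are the only interior components, is the converse to Proposition~\ref{prop:GD}: at a point of $X_D$ off $P_D$ where $G_D^X$ vanishes, solvability of the linear system \eqref{eq:GD} produces a section $\theta_X$ of $\cL^{\otimes 2}$ whose restriction to the Abel-Prym curve realizes an eigenform with a zero of order six, hence a surface in $W_D(6)$; by Lemma~\ref{le:abelprymemb} this surface is unique and reconstructs $[X]$, so $W_D(6)\to W_D^X$ is a bijection of orbifold curves and in particular $\chi(W_D(6))=\chi(W_D^X)$.

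With the divisor identified, I would intersect with the foliation class and evaluate using the intersection numbers on $\ol{X_D}$ from \cite{bainbridge07}: the relation $[\omega_1]\cdot[\omega_2]=\chi(X_D)$ (the log Euler class coming from \eqref{eq:OMXD}), the symmetry $[\omega_1]^2=[\omega_2]^2$ induced by the involution $\tau(z_1,z_2)=(z_2,z_1)$, and the boundary intersections $[\omega_i]\cdot[B]$. For the reducible locus I would use that $P_D$ is itself a union of Kobayashi curves — in fact Hirzebruch-Zagier/Shimura curves, which are even Riemannian-geodesic — so that $[\omega_i]\cdot[\ol{P_D}]=\chi(P_D)$, and insert the value of $\chi(P_D)$ expressed through \eqref{eq:chiXD}. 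Solving
\begin{equation*}
\chi(W_D^X)=[\omega_i]\cdot\bigl(2[\omega_1]+14[\omega_2]-2[\ol{P_D}]-(\text{boundary term})\bigr)
\end{equation*}
for the appropriate index $i$ then yields the clean multiple $-7\,\chi(X_D)$.

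The main obstacle I expect is exactly the bookkeeping that forces the answer to be a clean multiple of $\chi(X_D)$: one must pin down (a) the precise boundary vanishing order of $G_D^X$ at each resolved cusp, and (b) the reducible-locus contribution $\chi(P_D)$ together with $[\omega_i]\cdot[B]$, and then verify that the class-number-type corrections in these inputs cancel against each other and against the weight-$14$ term, leaving only $-7\,\chi(X_D)$. This cancellation is the delicate point; everything else is the formal intersection-theoretic assembly, modeled on Bainbridge's computation of $\chi(W_D)=-\tfrac{9}{2}\chi(X_D)$.
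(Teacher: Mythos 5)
Your overall skeleton — identify the divisor class of $G_D^X$ on a good compactification, pair with a foliation class, use $[\omega_1]\cdot[\omega_2]=\chi(X_D)$, and get $\chi(W_D(6))=\chi(W_D^X)$ from the Torelli-type converse — is the same as the paper's. But there is a genuine error at the heart of your divisor computation: the claimed order-two vanishing of $G_D^X$ along the reducible locus $P_D$. Your argument assumes that on a product $E_1\times E_2$ the eigenform coordinate $u_2$ is tangent to one elliptic factor, so that the columns of the theta matrix collapse into the span of two vectors. This cannot happen: real multiplication by $\ord_D$ never preserves an elliptic factor (an order in a real quadratic field does not embed into $\End(E_i)$), so the eigenform splitting $\cx^2=U_1\oplus U_2$ is always transverse to the product splitting, the $u_2$-derivatives hit both factors, and the columns do not degenerate. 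In fact the paper proves (Proposition~\ref{prop:g4torelli}, via the nondegeneracy of the Kummer image at the origin) that the vanishing locus of $G_D^X$ is exactly $\ol{W_D^X}$, with no interior component along $P_D$; so the term $-2[\ol{P_D}]$ you insert, and the input $\chi(P_D)$ you plan to feed in, should simply not be there. Had the $P_D$ term been real it would have changed the answer, so this is not a harmless bookkeeping issue.

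Two further corrections. First, with the normalization \eqref{eq:OMXD} a Hilbert modular form of weight $(2,14)$ is a section of $\omega_1\otimes\omega_2^{\otimes 7}$, not of $\omega_1^{\otimes 2}\otimes\omega_2^{\otimes 14}$; the correct class is $[\ol{W_D^X}]=[\omega_1]+7[\omega_2]$ (up to boundary), and your convention would already spoil the constant $7$ by a factor of $2$. Second, the ``delicate cancellation'' you anticipate does not occur in the correct computation: pairing with $-[\omega_1]$ kills the boundary contributions (since $[\omega_i]\cdot[B_j]=0$ on a good compactification) and the $[\omega_1]^2$ term (which vanishes there as well), so no $q$-expansions at the cusps, no $\chi(P_D)$, and no class-number bookkeeping are needed — one is left immediately with $-7[\omega_1]\cdot[\omega_2]=-7\chi(X_D)$, using $-[\omega_1]\cdot[\ol{W_D^X}]=\chi(W_D^X)$ for the Kobayashi curve $W_D^X$.
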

\par
The explicit formula for $\chi(X_D)$ is stated in \eqref{eq:chiXD}.
\par
\begin{example} {\rm For $D=8$ we have $\chi(X_8) = -1/6$ and thus
$\chi(W_8(6))= 7/6 $. 
This corresponds to the curve $V(X_1)$ calculated in \cite{mcmullenprym} with genus zero, two
cusps and two elliptic points of order two and three respectively. 
\par
For $D=12$ we obtain $\chi(X_{12}) = -1/3$ and thus $\chi(W_{12}(6))=  7/3$.  This corresponds
to the curve $V(X_3)$ calculated in \cite{mcmullenprym} with genus zero, three
cusps, one elliptic point of order two and one elliptic point of order six.}
\end{example}
\par
\begin{theorem} \label{thm:eulerS}
For genus three and $D \equiv 5 \, (8)$ the locus $W_D(4)$ is empty
(and $W_D^S$ is not defined). For $D \equiv 4 \, (8)$ we have 
$$ \chi(W_D^S)  = \chi(W_D(4)) = -\frac52 \chi(X_{D,(1,2)}).$$
For $D \equiv 1 \, (8)$ there are two components $W_D(4)^1$ and 
$W_D(4)^2$, each mapping to a Kobayashi curve $W_D^{S,j}$ in $X_{D,(1,2)}$
and the  Euler characteristic is given by
$$\chi(W_D^{S,j}) = \chi(W_D(4)^j) = -\frac52 \chi(X_{D,(1,2)}) \quad \text{for}\quad j=1,2.$$
\end{theorem}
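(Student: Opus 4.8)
The plan is to run the argument of Theorem~\ref{thm:eulerX} in the $(1,2)$-polarized setting, replacing the principally polarized surface $X_D$ by $X_{D,(1,2)}$ and the weight-$(2,14)$ form $G_D^X$ by the weight-$(1,5)$ form $G_D^S$ of Proposition~\ref{prop:GDS}. The emptiness for $D \equiv 5\,(8)$ is immediate: the Prym of a curve parameterized by $W_D(4)$ always carries a $(1,2)$-polarization with real multiplication by $\ord_D$, hence defines a point of $X_{D,(1,2)}$; since that locus is empty for $D \equiv 5\,(8)$, so is $W_D(4)$. For the remaining congruences the equality $\chi(W_D^S) = \chi(W_D(4))$ follows from the bijectivity of $W_D(4) \to W_D^S$ established (together with the converse of Proposition~\ref{prop:GDS}) in this section, so it suffices to compute $\chi(W_D^S)$ inside $X_{D,(1,2)}$.

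For that computation I would first record the intersection numbers on a good compactification $\ol{X_{D,(1,2)}}$. From \eqref{eq:OMXD} one has $\Omega^1_{\ol{X_{D,(1,2)}}}(\log B) = \omega_1 \oplus \omega_2$, so the second log-Chern number is $[\omega_1]\cdot[\omega_2] = \chi(X_{D,(1,2)})$; Hirzebruch proportionality on the polydisc quotient gives $c_1^2 = 2c_2$, and since the Galois symmetry $z \mapsto z^\sigma$ forces $[\omega_1]^2 = [\omega_2]^2$, this yields $[\omega_1]^2 = [\omega_2]^2 = 0$. Because $W_D^S$ is a Kobayashi curve (it carries a maximal Higgs rank-two sub-VHS by McMullen's construction), the characterization recalled above (condition iii)) gives $\chi(W_D^S) = [\omega_i]\cdot[\ol{W_D^S}]$ for the appropriate foliation class $\omega_i$. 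Everything thus reduces to determining the class $[\ol{W_D^S}]$.

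To pin down $[\ol{W_D^S}]$ I would analyze the full zero divisor of $G_D^S$. By Proposition~\ref{prop:GDS} it has class $[\omega_1] + 5[\omega_2]$ on $\ol{X_{D,(1,2)}}$, and it decomposes as $[\ol{W_D^S}]$ (with a multiplicity I expect to be one, to be checked from Lemma~\ref{la:partder}), plus a reducible locus where the $(1,2)$-polarized abelian surface splits as a product and the chosen sections $\theta_0,\theta_1$ of $\check{\cL}$ or their $z_2$-derivatives $D_2\theta_j,\,D_2\theta_j'$ become linearly dependent for a reason other than the four-fold zero, plus a boundary divisor supported on the cusps. Computing $[\omega_i]\cdot[\ol{W_D^S}] = [\omega_i]\cdot([\omega_1]+5[\omega_2]) - [\omega_i]\cdot(\text{reducible locus}) - [\omega_i]\cdot(\text{boundary})$ with the intersection numbers above should then give $\chi(W_D^S) = -\tfrac52\chi(X_{D,(1,2)})$. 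The main obstacle is precisely this step: identifying the reducible locus as an explicit Hirzebruch--Zagier type curve with known intersection numbers, and determining the vanishing order of $G_D^S$ along the boundary, so that the two correction terms can be evaluated. This is the analogue of Bainbridge's product-locus computation in \cite{bainbridge07} and is where the arithmetic of the discriminant enters.

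Finally, the component count follows from the splitting of $2$ in $\ord_D$. For $D \equiv 4\,(8)$ there is a single prime $\fra$ of norm two with $\fra=\fra^\sigma$, so $X_{D,(1,2)}$ and $W_D^S$ are connected and the computation above applies once. For $D \equiv 1\,(8)$ one has $(2) = \fra\,\fra^\sigma$ with $\fra \neq \fra^\sigma$; the normalization that $du_1$ carry the four-fold zero then forces the two choices of ideal, and the flip $\tau(z_1,z_2) = (z_2,z_1)$ of \eqref{eq:isotau} exchanges the corresponding vanishing loci of $G_D^S$ without identifying them, producing the two Kobayashi curves $W_D^{S,1},W_D^{S,2}$ inside the (irreducible) surface $X_{D,(1,2)}$. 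Running the intersection computation for each choice of $\fra$, and invoking Proposition~\ref{prop:vol12} whenever one wishes to re-express $\chi(X_{D,(1,2)})$ through $\chi(X_D)$, gives $\chi(W_D^{S,j}) = -\tfrac52\chi(X_{D,(1,2)})$ for $j=1,2$, matching the components $W_D(4)^j$ under the bijection.
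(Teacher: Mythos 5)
Your overall framework (intersection theory on a good compactification, the Kobayashi property giving $\chi(W_D^S) = -[\omega_1]\cdot[\ol{W_D^S}]$, $[\omega_1]^2=0$, $[\omega_1][\omega_2]=\chi(X_{D,(1,2)})$, and the two-component count from the splitting of $2$) is the right one and matches the paper. But there are two genuine problems. First, a factor of two: a Hilbert modular form of weight $(k_1,k_2)$ has vanishing divisor of class $\tfrac{k_1}{2}[\omega_1]+\tfrac{k_2}{2}[\omega_2]$ in the normalization $\Omega^1(\log B)=\omega_1\oplus\omega_2$ (this is exactly how the paper gets $[\omega_1]+7[\omega_2]$ from the weight-$(2,14)$ form $G_D^X$ in \eqref{eq:WDPDcomp}), so the weight-$(1,5)$ form $G_D^S$ has divisor class $\tfrac12[\omega_1]+\tfrac52[\omega_2]$, not $[\omega_1]+5[\omega_2]$. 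With your class you would land on $-5\chi(X_{D,(1,2)})$ before any corrections, which already contradicts the statement.

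Second, and more seriously, you leave the proof incomplete at the decisive step: you postulate a Bainbridge-style decomposition of the zero divisor of $G_D^S$ into $W_D^S$ plus a product/reducible locus plus a boundary contribution, and you explicitly defer the evaluation of those correction terms as ``the main obstacle.'' In this paper there are no such corrections to evaluate. Proposition~\ref{prop:g3torelli} (the Torelli-type converse, proved via the fibered-surface and Shimura-curve argument ruling out $\theta_X\equiv 0$) shows that the vanishing locus of $G_D^S$ in $X_{D,(1,2)}$ is \emph{exactly} $W_D^S$, with a unique curve over each point; and any boundary components of the closure pair trivially with the $\omega_i$, so they do not affect the intersection with $[\omega_1]$. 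Hence $\chi(W_D^S) = -[\omega_1]\cdot\bigl(\tfrac12[\omega_1]+\tfrac52[\omega_2]\bigr) = -\tfrac52\chi(X_{D,(1,2)})$ with no further work. The arithmetic of the discriminant enters only through the component count and through Proposition~\ref{prop:vol12}, not through a product-locus computation. As written, your argument does not reach the stated formula; you need to invoke the Torelli-type proposition to kill the correction terms rather than treat them as an open computation.
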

\par
\begin{example} {\rm For $D=12$ we have $\chi(X_{12}) = \chi(X_{12,(1,2)})$
by Proposition~\ref{prop:vol12}.
Hence $\chi(X_{12,(1,2)}) = -1/3$ as in the preceding example and thus
$\chi(W_{12}^S) = 5/6$. By \cite{manhlann} the locus $W_{12}(4)$ for this discriminant
has one component only. It corresponds
to the curve $V(S_1)$ calculated in \cite{mcmullenprym} with genus zero, two
cusps and one elliptic points of order six. 
\par
For $D=20$ we have  
$\chi(X_{20,(1,2)}) = 3/2 \chi(X_{20})$ by Proposition~\ref{prop:vol12}.
Consequently, $\chi(X_{20,(1,2)}) = 1$ and thus $\chi(W_{20}^S) =  5/2$.  
By \cite{manhlann} the locus $W_{20}(4)$ for this 
discriminant has one component only. It corresponds
to the curve $V(S_2)$ calculated in \cite{mcmullenprym} with genus zero, four
cusps and one elliptic point of order two.}
\end{example}
\par
The proof of both theorems will be completed at the end of this section.
\par
\paragraph{A Torelli-type theorem.} The Prym-Torelli map
associates with any Prym pair $(X,\rho)$ (or equivalently with
a (quotient) curve $Y$ and the covering datum) its Prym variety.
For $g(X) = 4$ we have $g(Y)=2$ with two fixed points, thus a
moduli space of dimension $5$. Since $\dim \AVmoduli[2] = 3$, there
cannot exist a Torelli theorem retrieving the curve from 
its Prym variety. Nevertheless, a corresponding statement
holds when restricted to real multiplication and eigenforms
with a zero of high multiplicity.
\par
\begin{prop} \label{prop:g4torelli}
Let $(A_\zz,\cL)$ be a principally polarized abelian surface with real multiplication
by $\ord_D$ and suppose that the corresponding point $[A_\zz] \in X_D$ is 
in the vanishing locus of $G_D^X$. Then there
is one and only one curve $X$ of genus four with $[X] \in W_D(6)$ whose Prym variety is $A_\zz$
and such that the eigenform with a $6$-fold zero is $du_1$.
\par
In particular the vanishing locus of $G_D^X$ is equal to $W_D^X$.
\end{prop}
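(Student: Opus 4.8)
The forward inclusion $W_D^X\subseteq\{G_D^X=0\}$ is exactly Proposition~\ref{prop:GD}, so to reach the final equality it suffices to prove the existence-and-uniqueness assertion: from every point $[A_\zz]\in X_D$ with $G_D^X(\zz)=0$ I want to reconstruct a single genus-four Prym curve $X\in W_D(6)$ with $\Prym(X)\cong A_\zz$ and with $du_1$ the eigenform carrying the $6$-fold zero. Existence yields the reverse inclusion $\{G_D^X=0\}\subseteq W_D^X$, while uniqueness shows that the natural map $W_D(6)\to W_D^X$ is a bijection, so that the two names describe the same curve. The whole plan is to invert, step by step, the construction of Section~\ref{sec:prym}, using the rigidity supplied by the real multiplication and the high-order zero exactly as announced in the preamble to this Torelli-type statement.

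First I would produce the candidate curve. Since $G_D^X(\zz)=\sum_j a_j(\zz)\theta_j'''(\zz)$ vanishes, and the coefficients $a_j(\zz)$ of \eqref{eq:GD} are by construction a solution of $\sum_j a_j\theta_j=\sum_j a_j\theta_j'=\sum_j a_j\theta_j''=0$ that does not vanish identically (the $a_j$ are the maximal minors of a generically rank-three matrix, so they cannot all vanish along a component of $\{G_D^X=0\}$), the section $\theta_X=\sum_j a_j(\zz)\theta_j$ of $\cL^{\otimes 2}$ has all its $u_2$-derivatives of orders $1,\dots,6$ vanishing at $\uu=(0,0)$: the odd ones because $\theta_X$ is even, the even ones by the three relations above together with the heat equation $\frac{\partial^{2k}}{\partial u_2^{2k}}\theta_j|_{\uu=(0,0)}=\frac{\partial^{k}}{\partial z_2^{k}}\theta_j$. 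Let $C=\{\theta_X=0\}\subset A_\zz$. As computed in Section~\ref{sec:prym}, $C\in|\cL^{\otimes 2}|$ has arithmetic genus five, and because $\theta_X$ is even it is invariant under $-1$ and passes through the origin $\uu=(0,0)$.

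Next I would recover the Prym curve and check uniqueness. The geometric heart is to show that $C$ is irreducible, has an ordinary double point at the origin, is smooth elsewhere, and meets the $2$-torsion of $A_\zz$ only at the origin; then its normalization $X\to C$ is a smooth curve of genus four. Granting this, the involution $-1$ of $A_\zz$ restricts to $C$ and lifts to an involution $\rho$ of $X$ fixing the two points over the node and no others, so Riemann--Hurwitz forces $Y=X/\rho$ to have genus two with exactly two branch points, the numerical type of a genus-four Prym pair. The Abel--Prym map of $(X,\rho)$ based at one of the two fixed points reproduces $C\hookrightarrow A_\zz$, and Welters' criterion together with $[\varphi(X)]=[\cL^{\otimes 2}]$, exactly as in Proposition~\ref{prop:WD6linequiv}, identifies $\Prym(X,\rho)$ with $A_\zz$ as a polarized abelian surface carrying its real multiplication. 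By Lemma~\ref{le:6foldzero}, read backwards, the vanishing of the six $u_2$-derivatives of $\theta_X$ is equivalent to $du_1$ restricted to $X$ having a zero of order six at $\uu=(0,0)$, so $X$ lies in the minimal stratum $\omoduli[4](6)$ and $[X]\in W_D(6)$ with $du_1$ the eigenform carrying the $6$-fold zero. Uniqueness is then immediate: the linear conditions pin down $(a_0:\dots:a_3)$, hence $\theta_X$ and $C$, up to a scalar, and passing to the normalization with its involution is canonical, so $X$ is unique.

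The main obstacle is precisely the bundle of local statements about $C$ assumed at the start of the last paragraph. A priori a section of $\cL^{\otimes 2}$ subject only to the derivative conditions could be reducible, could acquire a worse singularity at the origin, could be singular at a second point, or could pass through a further $2$-torsion point; any of these would corrupt the genus bookkeeping or introduce spurious fixed points of $\rho$, and hence wreck the identification of the Prym. I expect to exclude these degenerations using the rigidity already exploited above: the parity computation of Proposition~\ref{prop:WD6linequiv} and the evenness of $\theta_X$ pin the local type at the origin to an ordinary double point, while adjunction against the self-intersection $(\cL^{\otimes 2})^2=8$, the real multiplication by $\ord_D$, and the eigenform condition leave no room for extra singular or $2$-torsion points once a node sits at the origin. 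The remaining degenerate configurations, corresponding to points of $\{G_D^X=0\}$ on or near the boundary, are then dispatched by a properness argument on the good compactification $\ol{X_D}$ used elsewhere in this section.
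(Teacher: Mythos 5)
Your overall strategy is the paper's: reverse the construction of $G_D^X$, form $\theta_X=\sum_j a_j(\zz)\theta_j$, take its vanishing locus, normalize, and identify the Prym. But the two places where you defer or hand-wave are exactly the two places where the paper has to do real work, and your substitutes do not close them.

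First, the non-vanishing of $\theta_X$. You assert that the $a_j$ ``cannot all vanish along a component of $\{G_D^X=0\}$'' because they are maximal minors of a generically rank-three matrix. This is both unproven and insufficient: the proposition claims existence and uniqueness at \emph{every} point of the vanishing locus, so a generic statement does not suffice, and genericity of the rank is precisely what is in question. The paper proves that the $3\times 4$ matrix $M$ of theta constants and their first two $z_2$-derivatives has rank $3$ at every $\zz$, by observing that the sections of $\cL^{\otimes 2}$ embed $A_\zz/(-1)$ as a Kummer surface in $\proj^3$ whose node at the image of the origin has local equation $x^2+y^2+z^2=0$; the nondegeneracy of the Hessian of such a singularity contradicts $\mathrm{rank}(M)\le 2$. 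Nothing in your proposal supplies this.

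Second, the singularity analysis of $C=\{\theta_X=0\}$. You grant that $C$ is irreducible with a single node at the origin and smooth elsewhere, and later propose to recover this from the parity of the spin structure, adjunction against $(\cL^{\otimes 2})^2=8$, and ``a properness argument on the good compactification.'' None of these delivers the claim: adjunction only gives arithmetic genus $5$, which is compatible a priori with several nodes, a worse singularity at the origin, or a reducible curve, and the degenerations you worry about could occur at interior points of $X_D$, so compactness of $\ol{X_D}$ is irrelevant. The paper's actual argument is a short numerical one you are missing: the vanishing of the $u_2$-derivatives forces $du_1$ to have a zero of order at least $5$ on a branch of $C$ at the origin, so the geometric genus $g$ of the normalization satisfies $2g-2\ge 5$, i.e.\ $g\ge 4$; combined with arithmetic genus $5$ this leaves room for exactly one node (necessarily at the origin, a normal crossing of two branches) and no other singularity, and evenness of $\theta_X$ then upgrades the zero order to $6$. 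Welters' criterion finishes the Prym identification. Without these two arguments your proof does not go through.
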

\par
\begin{proof}
We reverse the reasoning in the construction of $G_D^X$. Take a basis
$\theta_0(\zz,\uu),\ldots,\theta_3(\zz,\uu)$ of sections of $\cL^{\otimes 2}$ and choose
$a_j(\zz)$ as in \eqref{eq:GD} (where we now work with partial derivatives 
in the $u_2$-direction of order $0$,$2$ and $4$, to avoid $z_2$-derivatives
in this pointwise argument). Now let
$$\theta_X(\zz,\uu) = \sum_{j=0}^3 a_j(\zz) \theta_j(\zz,\uu)$$
and define $X'_\zz$ to be the vanishing locus of $\theta_X$. One easily checks
that the function $\theta_X$
does not depend on the choice of the basis for $\cL^{\otimes 2}$.
\par
Suppose that $\theta_X$ is not zero. By Riemann-Roch and adjunction, $X'_\zz$ is a curve of
arithmetic genus $5$. Since all these sections  of  $\cL^{\otimes 2}$ 
are even and since $X'_\zz$ passes through the origin of $A_\zz$, 
it has a singularity there. The vanishing of the derivatives implies
that on the branch of $X'_\zz$ at zero in the direction $u_1$ the
one-form $du_1$ has a zero of order at least $5$. Hence the geometric
genus of $X'$ is at least four and hence the singularity at
the origin is just a normal crossing of two branches. Since $\theta_X$
is even, $du_1$ has in fact a zero of order $6$ at the origin.
By Welters' criterion for a curve to generate a Prym variety (\cite[Theorem~12.2.2]{bl})
the normalization $X_\zz$ of $X'_\zz$ is a curve of genus four with
an involution $\rho$ induced by $\uu \mapsto -\uu$ and $A_\zz$ is
the Prym variety of $(X,\rho)$. We conclude that $X_\zz \in W_D(6)$. This shows  
that there is a curve $W_D(6)$ whose Prym image is $A_\zz$ and by the
argument leading to Proposition~\ref{prop:GD} the curve $X$ just constructed is
the only choice with $du_1$ restricting to a zero of order $6$, if we can rule out that
all the $a_j(\zz)$ are zero, which is equivalent to the assumption $\theta_X \neq 0$.
Together with the inclusion stated in Proposition~\ref{prop:GD} this concludes the
proof of the proposition under the assumption on $\theta_X$.
\par
Suppose that $\theta_X$ was zero for some $\zz \in \HH^2$. This implies that
all the $a_j(\zz)$ vanish. Consequently, if we let
\be M =   \left( \begin{matrix} 
\theta_{0}(\zz) & \theta_{1}(\zz) & \theta_{2}(\zz) & \theta_{3}(\zz)\\
\theta'_{0}(\zz) & \theta'_{1}(\zz) & \theta'_{2}(\zz) & \theta'_{3}(\zz)\\
\theta''_{0}(\zz) & \theta''_{1}(\zz) & \theta''_{2}(\zz) & \theta''_{3}(\zz) \\
\end{matrix} \right), \quad \text{then} \quad {\rm rank}(M) \leq 2.
\ee
The image of $A_\zz$ under the projective embedding $A_\zz \to \proj^3$ defined by
the sections of $\cL^{\otimes 2}$
is known to be a Kummer surface, the quotient of $A_\zz$ by the involution
$(-1)$ (see \cite[Section~10]{bl}). Such a Kummer surface has $16$ nodes 
at the images of two-torsion points, i.e.\ singular points with local 
equation $x^2+y^2+z^2=0$. Since the Hessian of such a singularity has non-zero
determinant, this contradicts the above hypothesis ${\rm rank}(M) \leq 2$.
\end{proof}
\par
The same line of arguments works for $g=3$, with a different geometric
argument to rule out $\theta_X = 0$ and with an extra 
twist due to the decomposition behavior of the prime two.
\par
\begin{prop} \label{prop:g3torelli} Fix a fractional $\ord_D$-ideal $\fra$ of norm two and 
a realization $ X_{D,(1,2)} \cong \HH^2/\SL(\fra \oplus \ord_D^\vee)$.
Let $(A_\zz,\cL)$ be a $(1,2)$-polarized abelian surface with real multiplication
by $\ord_D$ and suppose that the corresponding point $[A_\zz] \in X_{D,(1,2)}$ is 
in the vanishing locus of $G_D^S$. Then there
is one and only one curve $X$ of genus four with $[X] \in W_D^X$ whose Prym variety 
is $[A_\zz]$ and such that the eigenform with a $4$-fold zero is $du_1$.
\par 
Moreover, if $D \equiv 1\, (8)$, the preimages $W_D(4)^1$ and $W_D(4)^2$ in $\moduli[4]$
of the vanishing locus of $G_D^S$ for the two choices $\fra$ and $\fra^\sigma$ of a prime ideal of norm two
are generically different.
\end{prop}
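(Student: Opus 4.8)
The plan is to run the reconstruction of Proposition~\ref{prop:g4torelli} in the $(1,2)$-polarized, genus-three setting, replacing the bundle $\cL^{\otimes 2}$ by the type-$(1,2)$ bundle $\check{\cL}$ and the three-by-three determinant by the two-by-two determinant of \eqref{eq:GDS}. I would fix $\fra$ together with the realization $X_{D,(1,2)}\cong\HH^2/\SL(\fra\oplus\ord^\vee)$, take $[A_\zz]$ in the vanishing locus of $G_D^S$, and choose a basis $\theta_0(\zz,\uu),\theta_1(\zz,\uu)$ of $H^0(A_\zz,\check{\cL})$ normalized as in Lemma~\ref{le:cLclass}, so these sections are odd and vanish at $\uu=(0,0)$. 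Setting $a_j(\zz)=(-1)^jD_2\theta_{j+1}(\zz)$, I would form $\theta_X=a_0\theta_0+a_1\theta_1$; by construction $\partial\theta_X/\partial u_2$ vanishes at the origin, the determinantal shape of \eqref{eq:GDS} shows $(a_0:a_1)$ is independent of the basis up to scale, and $G_D^S=0$ forces in addition $\partial^3\theta_X/\partial u_2^3=0$ there. Assuming $\theta_X\not\equiv0$, its vanishing locus $X'_\zz$ is a curve of arithmetic genus three by adjunction, since $\check{\cL}^2=4$. Oddness of $\theta_X$ makes $X'_\zz$ invariant under $\uu\mapsto-\uu$ and supplies the even-order $u_2$-derivatives for free, so arguing as in Proposition~\ref{prop:g4torelli} (the converse of Lemma~\ref{la:partder}) the eigenform $du_1$ restricted to $X'_\zz$ has a zero of order exactly four at the origin. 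I would then invoke Welters' criterion (\cite[Theorem~12.2.2]{bl}) to identify $X'_\zz$ with the image $\varphi_0(X_\zz)$ of a genus-three curve carrying the involution induced by $(-1)$ and with $\Prym(X_\zz)=A_\zz$; since $du_1$ has a $4$-fold zero, $X_\zz\in W_D(4)$, and reversing the construction shows it is the unique such curve. With the inclusion of Proposition~\ref{prop:GDS} this yields that the vanishing locus of $G_D^S$ equals $W_D^S$.

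The step needing a genuinely new argument — the one flagged in the text as ``a different geometric argument'' — is ruling out $\theta_X\equiv0$, that is, ruling out $D_2\theta_0(\zz)=D_2\theta_1(\zz)=0$. Here I would argue with the pencil $|\check{\cL}|$ rather than with the Kummer surface. By \cite[Lemma~10.1.2]{bl} the base locus of $\check{\cL}$ consists of four \emph{distinct} points forming one $K(\check{\cL})$-orbit, one of which is the origin by our normalization. Any two distinct members of the pencil meet only along the base locus, with total intersection number $\check{\cL}^2=4$; as the four base points are distinct, each must be a transverse intersection. If $\nabla\theta_0(0)$ and $\nabla\theta_1(0)$ were linearly dependent, some member of the pencil would be singular at the origin or share its tangent there with the others, forcing intersection multiplicity at least two at the origin and hence total intersection at least five, a contradiction. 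Thus the two gradients are independent; in particular they cannot both have vanishing $u_2$-component, so $(a_0,a_1)\neq(0,0)$ and $\theta_X\not\equiv0$. I expect this B\'ezout-type estimate on the pencil to be the main obstacle, since it is exactly where the type-$(1,2)$ geometry departs from the principally polarized case of Proposition~\ref{prop:g4torelli}.

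For the second statement I would exploit the isomorphism \eqref{eq:isotau}, which identifies $\cx^2/\Lambda^\fra_{(z_1,z_2)}$ with $\cx^2/\Lambda^{\fra^\sigma}_{(z_2,z_1)}$ via $(u_1,u_2)\mapsto(u_2,u_1)$ and hence interchanges the eigenforms $du_1$ and $du_2$. Reading the vanishing locus of $G_D^S$ for the ideal $\fra$ through this isomorphism, the condition ``$du_1$ has a $4$-fold zero in the $\fra$-realization'' becomes ``$du_2$ has a $4$-fold zero in the $\fra^\sigma$-realization''. Thus, transported to the single surface $\HH^2/\SL(\fra^\sigma\oplus\ord^\vee)$, the loci $W_D(4)^1$ and $W_D(4)^2$ are the flips of one another under $\tau(z_1,z_2)=(z_2,z_1)$: one requires the $\iota_2$-eigenform to have a $4$-fold zero, the other the $\iota$-eigenform. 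To conclude that they are generically different I would argue on strata. A flat surface parameterized by $W_D(4)$ lies in the minimal stratum $\omoduli[3](4)$, so its distinguished eigenform has a single $4$-fold zero, whereas the complementary eigenform is again a $(-1)$-eigenform whose $\rho$-invariant zero divisor is generically not a single point, placing it in a larger stratum. Hence a generic point of $W_D(4)^2$ cannot lie on $W_D(4)^1$, since that would force \emph{both} eigenforms to have a $4$-fold zero simultaneously; therefore $W_D(4)^1\cap W_D(4)^2$ is a proper, finite subvariety, which is the asserted generic difference.
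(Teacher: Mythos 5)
Your reconstruction of the first statement follows the paper's outline (reverse the construction of $G_D^S$, check that $\{\theta_X=0\}$ is a smooth genus three curve carrying the involution induced by $(-1)$, and reduce everything to excluding $\theta_X\equiv 0$), but at that crucial exclusion step you take a genuinely different and much shorter route. The paper argues globally on the pencil $|\check{\cL}|$: blowing up the four base points yields a genus three fibration over $\proj^1$ whose Euler number forces singular fibers, the classification of singular members of a $(1,2)$-polarization reduces these to nodal genus two curves, and the resulting family is then excluded via the characterization of \Teichmuller curves and the non-existence statement for the relevant Shimura curves. Your argument is local: since the four base points are distinct (they form a $K(\check{\cL})$-orbit by Lemma~\ref{le:cLclass}) and two distinct members of the pencil meet set-theoretically exactly in the base locus with total intersection number $\check{\cL}^2=4$, the intersection is transverse at each base point; hence $d\theta_0(0)$ and $d\theta_1(0)$ are linearly independent and cannot both annihilate $\partial/\partial u_2$, so $(a_0,a_1)\neq(0,0)$. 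Granting the same input both proofs rely on --- the four distinct base points of \cite[Lemma~10.1.2]{bl}, i.e.\ that $|\check{\cL}|$ has no fixed component at the point in question --- your B\'ezout computation is correct and replaces the most elaborate portion of the paper's proof by an elementary one; what it does not recover is the paper's extra information about which degenerate fibrations can occur, but that is not needed for the proposition.

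The second statement is where your proposal has a genuine gap. Reducing the claim to showing that at a generic point of one component the complementary eigenform does not have a single $4$-fold zero is fine, but your justification --- that its zero divisor is ``generically not a single point'' --- is exactly the assertion to be proved, and the stratum bookkeeping does not establish it: nothing you say excludes that a whole component is $\tau$-invariant, i.e.\ that both eigenforms have $4$-fold zeros simultaneously along it (and you cannot fall back on irreducibility, which is not available here). The paper closes this with an arithmetic input absent from your proposal: a curve lying on both components would produce an $\ord_D$-linear isomorphism $\fra\oplus\ord_D^\vee\cong\fra^\sigma\oplus\ord_D^\vee$ of the period modules attached to the distinguished eigenforms, and taking determinants yields $\fra\cong\fra^\sigma$, which is ruled out for $D\equiv 1\ (8)$; the coordinate version of this argument is \cite[Lemma~6.2]{manhlann}. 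You need this step, or a substitute of comparable substance, to finish.
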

\par
\begin{proof} To prove the first statement, we reverse the argument of Proposition~\ref{prop:GDS}
and use the notations introduced there. 
Fix a basis $\theta_0(\zz,\uu), \theta_1(\zz,\uu)$ of sections of $\cL$ 
and consider $\theta_X = a_0(\zz) \theta_0 + a_1(\zz) \theta_0$.
Suppose that $\theta_X$ is not zero. Then the vanishing locus $X = \{\theta_X = 0\}$ is a curve
of arithmetic genus three and by construction $du_1$ is a holomorphic one-form on $X$
with a zero of order (at least) $4$ at $0 \in X$. This implies that $X$ is smooth. Since
by Lemma~\ref{le:cLclass} the map $(-1)$ on $A_\zz$ induces an involution on $X$ with $4$
fixed points, we conclude that $[X] \in W_D(4)$. This shows  
that there is a curve $W_D(4)$ whose Prym image is $A_\zz$ and by the
argument leading to Proposition~\ref{prop:GDS} the curve $X$ just constructed is
the only choice with $du_1$ restricting to a zero of order $4$, if we can rule out that
both  $D_2\theta_0(\zz,0) = - a_1(\zz)=0$ and $D_2\theta_1(\zz,0) = a_0(\zz)=0$. 
Together with the inclusion stated in Proposition~\ref{prop:GDS} this implies
the first statement of the proposition under the assumption
on $\theta_X$.
\par
Suppose that $\theta_X$ was zero for some $\zz \in \HH^2$, i.e.\ 
$a_0(\zz) = a_1(\zz) =0$. 
Consider the family of arithmetic
genus three curves given by the vanishing locus of $a\theta_0 + b\theta_1$ parameterized
by $(a:b) \in \proj^1$. If we blow up the four base points of $\cL$ in $A_\zz$ we obtain
a fibered surface with Euler number $-4$. If all the fibers were smooth, the formula
for a genus three fiber bundle over a projective line gives Euler number $-8$, 
a contradiction. The possible singular fibers of a section of $\cL$ are determined
in \cite{barth12pol}, see also \cite[Exercise 10.8.(1)]{bl}. 
The first possibility is a genus two curve with one
node, necessarily disjoint from the base points, the other two possibilities
consist of configurations of elliptic curves. (They can occur only on
some modular curves in $X_{D,(1,2)}$, but we will not use this.) Since $a_0=a_1=0$, 
the holomorphic one-form $du_1$ restricted to any global section of $\cL$
has a zero of order at least two at zero. This already rules out all the configurations
of elliptic curves.
\par
Still assuming that $a_0 = a_1 =0$, we consider the fibered surface 
$f: \cX = {\rm Blowup}_{4 \,\text{ points}} (A_\zz) \to \proj^1$ all whose singular fibers
are of geometric genus two. By Lemma~\ref{le:cLclass} all the fibers admit an
involution $\rho$ induced by $(-1)$ with (generically) $4$ fixed points. The
quotient is thus a curve of arithmetic genus one. Since for the singular fibers
the $4$ base points are disjoint from the node, the arithmetic genus one
curve is smooth if and only if the corresponding fiber of $f$ is smooth.
We claim that this implies that $f$ is a pullback of a \Teichmuller curve generated 
by a square-tiled surface, whose family of Jacobians has a two-dimensional fixed part, the
abelian surface $A_\zz$. In fact, consider the image of the moduli map $\proj^1 \to \moduli[3]$.
The image is embedded in $\moduli[3]$, so its tangent map, the Kodaira-Spencer
map, vanishes nowhere. Since the $2$-dimensional abelian subvariety $A_\zz$ of the
family of Jacobians is constant, this implies that the Kodaira-Spencer map
of the quotient family of elliptic curves $\cX/\langle \rho \rangle$ never vanishes.
Together with the statement on singularities the hypothesis for the characterization
of \Teichmuller curves \cite[Theorem~1.2]{bouwmoel} are met.
\par
The claim implies that this fibered surface also defines a Shimura curve and 
by \cite[Lemma~4.5]{moellerST}, the singular fibers of such a family
cannot be of geometric genus two, more precisely, the fibered surface $f$ has to be the 
unique such curve in $\moduli[3]$, described in detail in \cite[Section~3]{moellerST} or in 
\cite{HeSc}. This contradiction concludes the proof that $\theta_X$ is nowhere zero.
\par
For the second statement we look at the periods of the eigenform $\omega$ with
a $4$-fold zero using our conventions \eqref{eq:lazz}. The
periods of the first eigenform are, by definition, $\fra \oplus \ord_D^\vee z$
for some $z \in \HH$ (as opposed to $\fra^\sigma  \oplus \ord_D^\vee z$ for
the second eigenform). If the two components $W_D(4)^1$ and $W_D(4)^2$
coincided at some point the two eigenforms would lie on the same 
abelian surface with real multiplication and $(1,2)$-polarization. 
We thus would obtain an $\ord_D$-linear isomorphism 
$$\fra \oplus \ord_D^\vee \cong \fra^\sigma \oplus \ord_D^\vee.$$
Taking determinants of both sides we obtain $\fra \cong \fra^\sigma$, 
contradicting $D \equiv 1 \,(8)$.
\end{proof}
\par
The last argument is given in coordinates on flat surfaces explicitly 
in \cite[Lemma~6.2]{manhlann}, and serves for the same purpose
of distinguishing the two components.
\par
We can now collect all the information and prove all the theorems stated
in the introduction as well as at the beginning of this section.
\par
\begin{proof}[Proof of Theorem~\ref{thm:introGDX}]
Proposition~\ref{prop:GD} and \ref{prop:g4torelli} prove the first statement of the theorem.
For the second statement, note that the proof that $W_D(6)$ is a \Teichmuller curve
uses the fact that universal covering of $\HH \to \teich_4$ of
$C \to \moduli[4]$ composed with the Torelli map $\teich \to \HH_4$ to the Siegel upper
half space can be composed with a projection $\HH_4 \to \HH$ so that the composition is a
M\"obius transformation, hence a Kobayashi isometry. The non-expansion property
of the Kobayashi metric implies that $\HH \to \teich_4$ is a Kobayashi curve.
Since $\HH_4 \to \HH$ was constructed using the periods of the eigenform with
a $6$-fold zero, the composition
$$\HH \to \teich_4 \to \HH_4 \to \HH$$
factors through the universal covering map $\HH \to \HH^2$ of $W_D^X \to X_D$.
By the same argument, this is a Kobayashi curve.
\end{proof}
\begin{proof}[Proof of Theorem~\ref{thm:eulerX}]
Using the definition of the line bundles $\omega_i$ along with \eqref{eq:OMXD}, 
the class of the vanishing locus of a modular form is
\be \label{eq:WDPDcomp} [W_D^X] = [\omega_1] + 7[\omega_2]. \ee
Since $[W_D^X]$ is a Kobayashi curve with the isometric embedding
given by the first variable, we have $-[W_D^X][\omega_1] = \chi(W_D^X)$.
Thus, pairing \eqref{eq:WDPDcomp} with 
$-[\omega_1]$ and using that $[\omega_1][\omega_2] = \chi(X_D)$ on a Hilbert modular surface
gives the desired result.
\end{proof}
\par
The {\em proof of Theorem~\ref{thm:eulerS}} follows from the same intersection argument.
The {\em proof of Theorem~\ref{thm:intoXS}} is now an immediate consequence of Theorem~\ref{thm:eulerX} together
with Proposition~\ref{prop:g4torelli} and Theorem~\ref{thm:eulerS} together
with Proposition~\ref{prop:g3torelli}.

\section{An invariant and possible generalizations} \label{sec:general}

Aiming to construct more, essentially different Kobayashi curves one can use
the procedure involving theta functions to construct Hilbert modular forms of
non-parallel weight generalizing the preceding construction. Of course we
thus leave the world of \Teichmuller curves. We propose replace the integer $2$
(type of the polarization) in the genus four discussion by an arbitrary $N \in {\mathbb N}$.
First, we start with the definition of an invariant.
\par
Suppose that $C \to X_D$ is a Kobayashi curve and let $\overline{C} \to \overline{X_D}$
the closure in a good compactification of $X_D$. We define
$$ \lambda_2(C) = \frac{[\omega_1]\cdot [\ol{C}]}{[\omega_2]\cdot [\ol{C}]}$$
and call this ratio the {\em second Lyapunov exponent} of the Kobayashi curve. 
Although we defined the intersection number on a compactification, the value of $\lambda_2$
is independent of the choice of a compactification since $[\omega_i]\cdot [B_j]$ for $i=1,2$
and all components $B_j$ of the boundary divisor $B$. 
Justification for the terminology, i.e.\ the relation to a Lyapunov exponent for
the $\SL_2(\reals)$-action is given in \cite{weiss}, implicitly also in the last
section of \cite{moelPCMI}.
\par
The Propositions~\ref{prop:GD}, \ref{prop:GDS}, \ref{prop:g4torelli} and \ref{prop:g3torelli}  
can in this language be summarized as follows.
\par
\begin{prop}
Each of the Hilbert modular surfaces $X_D$ contains Kobayashi curves
of second Lyapunov exponent $1$, $1/3$ and $1/7$. The Hilbert modular surface $X_5$ moreover
contains Kobayashi curves of second Lyapunov exponent $1/2$. 
\end{prop}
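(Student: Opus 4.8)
The plan is to compute, for each curve in the list, its class in the intersection ring of a good compactification $\ol{X_D}$ and then read off $\lambda_2$ from the two structural relations that already drive the proof of Theorem~\ref{thm:eulerX}, namely $[\omega_1]^2=[\omega_2]^2=0$ and $[\omega_1]\cdot[\omega_2]=\chi(X_D)$. Concretely, if a Kobayashi curve $C$ is the vanishing locus of a Hilbert modular form of weight $(k_1,k_2)$, then $[\ol C]=\tfrac{k_1}{2}[\omega_1]+\tfrac{k_2}{2}[\omega_2]$, so that $[\omega_1]\cdot[\ol C]=\tfrac{k_2}{2}\chi(X_D)$ and $[\omega_2]\cdot[\ol C]=\tfrac{k_1}{2}\chi(X_D)$; with the normalization that the Kobayashi-geodesic (tangent) direction carries the top exponent, the ratio defining $\lambda_2$ evaluates to $k_1/k_2$. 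Everything beyond this bookkeeping is already contained in the earlier sections, so the proof is mainly a matter of collecting weights.

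First I would treat the three values that occur on every $X_D$. The image of the diagonal, and by \cite{weiss} any of its $\GL_2^+(\ratls(\sqrt D))$-twists, is preserved by $\tau(z_1,z_2)=(z_2,z_1)$, which interchanges $\omega_1$ and $\omega_2$; hence $[\omega_1]\cdot[\ol C]=[\omega_2]\cdot[\ol C]$ and $\lambda_2=1$. For $W_D\subset\moduli[2]$ (the Weierstra\ss\ curves in $\mathcal H(2)$) one has $\chi(W_D)=-\tfrac92\chi(X_D)$ by \cite{bainbridge07}, giving the tangent intersection $[\omega_1]\cdot[\ol{W_D}]=\tfrac92\chi(X_D)$, while the weight of the theta form of \cite{moelzag} fixes the transverse one and yields the classical value $\lambda_2=1/3$ for $\mathcal H(2)$. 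For $W_D^X$ the weight is $(2,14)$ by Proposition~\ref{prop:GD}, its vanishing locus is exactly $W_D^X$ by Proposition~\ref{prop:g4torelli}, and the class \eqref{eq:WDPDcomp} of Theorem~\ref{thm:eulerX} gives $\lambda_2=1/7$. (The genus-three curve $W_D(4)$ fits the same pattern with $\lambda_2=1/5$, but its Prym carries a $(1,2)$-polarization, so it lives on $X_{D,(1,2)}$ rather than on $X_D$; this is why $1/5$ does not enter the list for $X_D$.)

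For the extra value $1/2$ on $X_5$ I would invoke the one primitive genus-two \Teichmuller curve lying in the \emph{other} stratum $\mathcal H(1,1)$, the curve generated by the regular decagon. By McMullen's classification of genus-two \Teichmuller curves (\cite{mcmullenbild}, \cite{calta}) this is the unique primitive \Teichmuller curve in $\mathcal H(1,1)$, its trace field is $\ratls(\sqrt5)$, and the eigenform has real multiplication by $\ord_5$; hence its image under the eigenform/Torelli embedding is a Kobayashi curve in $X_5$ that is not a twist of the diagonal. Its second Lyapunov exponent equals that of the Kontsevich--Zorich cocycle of the decagon, the identification being the content of \cite{weiss} and \cite{mvkobayashi}, and the latter equals $1/2$ by the genus-two evaluation for $\mathcal H(1,1)$ (where $\lambda_1+\lambda_2=\tfrac32$). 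Because the decagon curve is the only primitive member of $\mathcal H(1,1)$, this $1/2$ is genuinely special to $X_5$.

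The routine part is the bookkeeping with $[\omega_i]$; the one genuine obstacle is the $1/2$ statement, since, unlike the other three curves, it is not produced by the theta-determinant machinery of this paper and carries no weight-$(k_1,k_2)$ modular form here. I therefore expect the real work to be importing and reconciling two external facts: (i) that the decagon's primitive real-multiplication invariant is exactly $\ord_5$, so that the target surface is $X_5$ and not some $X_{5f^2}$, and (ii) that the flat-geometric Lyapunov exponent $1/2$ coincides with the intersection-theoretic $\lambda_2$ of its image curve, which is exactly the equivalence of conditions in \cite{mvkobayashi} combined with \cite{weiss}. Once these are in place, all four values drop out with no further computation.
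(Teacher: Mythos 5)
Your proposal is correct and follows essentially the same route as the paper, whose proof is simply the list ``Hirzebruch--Zagier cycles give $\lambda_2=1$, $W_D$ gives $\lambda_2=1/3$, $W_D^X$ gives $\lambda_2=1/7$, and the decagon curve of \cite{mcmullentor} gives $\lambda_2=1/2$ in $X_5$''; you merely make explicit the weight-to-class bookkeeping that the paper carries out in the proof of Theorem~\ref{thm:eulerX}. Your reading of the normalization (so that $\lambda_2=k_1/k_2\le 1$, the tangent direction $\omega_1$ being the one with $-[\omega_1]\cdot[\ol C]=\chi(C)$) is the intended one, since the displayed ratio $[\omega_1]\cdot[\ol C]/[\omega_2]\cdot[\ol C]$ taken literally with \eqref{eq:WDPDcomp} would give $7$ rather than $1/7$.
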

\par
\begin{proof}
Hirzebruch-Zagier cycles gives $\lambda_2=1$, the curve $W_D$ give $\lambda_2 = 1/3$, 
the $W_D^X$ are curves with $\lambda_2 = 1/7$. The decagon generates a \Teichmuller
curve with $\lambda_2 = 1/2$ in $X_5$ (\cite{mcmullentor}).
\end{proof}
\par
\begin{problem}
What is the set of second Lyapunov exponents for Kobayashi curves in $X_D$?
\end{problem}
\par
The same question can be formulated for Hilbert modular surfaces of other
genera, e.g.\ in $X_{D,(1,2)}$  the curves $W_D^S$ have $\lambda_2= 1/5$. 
\par
There is an obvious generalization of the construction of $G_D^X$. 
The (Hilbert) theta functions $\theta(c_1,0)(\zz,\uu)$ for $c_1 \in \frac1N \zed^2/\zed^2$ are a basis 
of $\cL^{\otimes N}$ on the abelian variety $A_\zz$. Numbering elements in $\frac1N \zed^2/\zed^2$
by $0,\ldots,N^2-1$ we obtain theta functions $\theta_0,\ldots,\theta_{N^2-1}$. We let
$$G_D^{[N]}(\zz) = \left| \begin{matrix} \theta_0(\zz) & \theta_1(\zz) & \cdots &\theta_{N^2-1}(\zz) \\ 
\theta'_0(\zz) & \theta_1'(\zz) & \cdots &\theta'_{N^2-1}(\zz) \\
\vdots & \vdots && \vdots \\
\theta^{(N^2-1)}_0(\zz) & \theta^{(N^2-1)}_1(\zz) &\cdots &\theta^{(N^2-1)}_{N^2-1}(\zz) \\
\end{matrix} \right|. $$
As in the proof of Proposition~\ref{prop:GD} we conclude that 
$G_D^{[N]}$ is a modular form of weight $(\frac{N^2}2,\frac{N^2}2(2N^2-1))$.
\begin{problem}
Is the zero locus of $G_D^{[N]}$ irreducible? Are its components Koba\-ya\-shi 
curves in $X_D$?
\end{problem}
\par
A positive answer to this problem would at least show that the set of second
Lyapunov exponents for Kobayashi curves in $X_D$ is infinite, containing the values $\lambda_2 =\frac1{2N^2-1}$.

\bibliography{my}

\begin{thebibliography}{McM06b}

\bibitem[Bai07]{bainbridge07}
M.~Bainbridge.
\newblock Euler characteristics of {T}eichm\"uller curves in genus two.
\newblock {\em Geom. Topol.}, 11:1887--2073, 2007.

\bibitem[Bar87]{barth12pol}
W.~Barth.
\newblock Abelian surfaces with {$(1,2)$}-polarization.
\newblock In {\em Algebraic geometry, {S}endai, 1985}, volume~10 of {\em Adv.
  Stud. Pure Math.}, pages 41--84. North-Holland, Amsterdam, 1987.

\bibitem[Bas62]{Bass62}
H.~Bass.
\newblock Torsion free and projective modules.
\newblock {\em Trans. Amer. Math. Soc.}, 102:319--327, 1962.

\bibitem[BL99]{bipol}
C.~Birkenhake and H.~Lange.
\newblock The dual polarization of an abelian variety.
\newblock {\em Arch. Math. (Basel)}, 73(5):380--389, 1999.

\bibitem[BL04]{bl}
C.~Birkenhake and H.~Lange.
\newblock {\em Complex {A}belian {V}arieties}, volume 302 of {\em Grundlehren
  der Mathematischen Wissenschaften}.
\newblock Springer-Verlag, Berlin, second edition, 2004.

\bibitem[BM10]{bouwmoel}
I.~Bouw and M.~M{\"o}ller.
\newblock Teichm\"uller curves, triangle groups, and {L}yapunov exponents.
\newblock {\em Ann. of Math. (2)}, 172(1):139--185, 2010.

\bibitem[Cal04]{calta}
K.~Calta.
\newblock Veech surfaces and complete periodicity in genus two.
\newblock {\em J. Amer. Math. Soc.}, 17(4):871--908, 2004.

\bibitem[CW90]{CoWo90}
P.~Cohen and J.~Wolfart.
\newblock Modular embeddings for some nonarithmetic {F}uchsian groups.
\newblock {\em Acta Arith.}, 56(2):93--110, 1990.

\bibitem[HS08]{HeSc}
F.~Herrlich and G.~Schmith{\"u}sen.
\newblock An extraordinary origami curve.
\newblock {\em Math. Nachr.}, 281(2):219--237, 2008.

\bibitem[KZ03]{kz03}
M.~Kontsevich and A.~Zorich.
\newblock Connected components of the moduli spaces of {A}belian differentials
  with prescribed singularities.
\newblock {\em Invent. Math.}, 153(3):631--678, 2003.

\bibitem[LM11]{manhlann}
E.~Lanneau and D.-N. Manh.
\newblock {Teichm{\"u}ller curves generated by Weierstra\ss\ Prym eigenforms in
  genus three}, 2011, preprint, arXiv: math.GT/1111.2299.

\bibitem[McM03]{mcmullenbild}
C.~McMullen.
\newblock Billiards and {T}eichm\"uller curves on {H}ilbert modular surfaces.
\newblock {\em J. Amer. Math. Soc.}, 16(4):857--885, 2003.

\bibitem[McM06a]{mcmullenprym}
C.~McMullen.
\newblock Prym varieties and {T}eichm\"uller curves.
\newblock {\em Duke Math. J.}, 133(3):569--590, 2006.

\bibitem[McM06b]{mcmullentor}
C.~McMullen.
\newblock Teichm\"uller curves in genus two: torsion divisors and ratios of
  sines.
\newblock {\em Invent. Math.}, 165(3):651--672, 2006.

\bibitem[M{\"o}l11a]{moellerST}
M.~M{\"o}ller.
\newblock Shimura and {T}eichm\"uller curves.
\newblock {\em J. Mod. Dyn.}, 5(1):1--32, 2011.

\bibitem[M{\"o}l11b]{moelPCMI}
M.~M{\"o}ller.
\newblock {Teichm{\"u}ller curves, mainly from the point of view of algebraic
  geometry}, 2011, available on the author's web page, to appear as PCMI
  lecture notes.

\bibitem[MT02]{masur02}
H.~Masur and S.~Tabachnikov.
\newblock Rational billiards and flat structures.
\newblock In {\em Handbook of dynamical systems, Vol.\ 1A}, pages 1015--1089.
  North-Holland, Amsterdam, 2002.

\bibitem[Mum74]{mumfordprym}
D.~Mumford.
\newblock Prym varieties. {I}.
\newblock In {\em Contributions to analysis (a collection of papers dedicated
  to {L}ipman {B}ers)}, pages 325--350. Academic Press, New York, 1974.

\bibitem[Mum77]{mumford77}
D.~Mumford.
\newblock Hirzebruch's proportionality theorem in the noncompact case.
\newblock {\em Invent. Math.}, 42:239--272, 1977.

\bibitem[MV10]{mvkobayashi}
M.~M{\"o}ller and E.~Viehweg.
\newblock Kobayashi geodesics in {$\mathcal{A}_g$}.
\newblock {\em J. Differential Geom.}, 86(2):355--579, 2010.

\bibitem[MZ11]{moelzag}
M.~M{\"o}ller and D.~Zagier.
\newblock {Theta derivatives and Teichm{\"u}ller curves}, 2011, in preparation.

\bibitem[Wei12]{weiss}
C.~Wei\ss.
\newblock {Twisted Teichm{\"u}ller curves}, 2012, Ph.D. Thesis, Frankfurt, in
  preparation.

\bibitem[Zag08]{zagier123}
Don Zagier.
\newblock Elliptic modular forms and their applications.
\newblock In {\em The 1-2-3 of modular forms}, Universitext, pages 1--103.
  Springer, Berlin, 2008.

\end{thebibliography}

\end{document}